\documentclass[10pt]{article}
\usepackage{verbatim}
\usepackage{tikz}
\usepackage{subfigure}
\usepackage[english]{babel}
\usepackage{graphicx}
\usepackage{extarrows}
\usepackage{lineno}
\usepackage{indentfirst,latexsym,bm,amssymb, amsmath,amsxtra,amscd}
\usepackage{amsthm}
\usepackage{enumitem}
\numberwithin{equation}{section}
\usepackage{makeidx}
\usepackage{geometry}
\usepackage{setspace}
\allowdisplaybreaks
\begin{document}

\title{Minimum Distance Spectral Radius of Tree with Given Maximum Degree}
\author{Yifei Li$^{*}$ ,  Zia Ullah Khan\thanks{College of Mathematics and Physics, Shanghai University of Electric Power, No.1851, Hucheng Ring Road, Pudong New Area, Shanghai, 201306, P.R. China. Email: 762928985@qq.com; zukhan@shiep.edu.cn}
\thanks{Corresponding Author: zukhan@shiep.edu.cn; zukhan@163.com}}
\date{}
\maketitle
\newtheorem{theorem0}{Theorem}
\newtheorem{theorem}{Lemma}
\newtheorem{theorem1}{Theorem}

\begin{abstract}
%Let $D(G)$ be the distance matrix of a simple graph $G$, and the largest eigenvalue of $D(G)$ be the distance spectral radius. The degree of vertex $v$ in $G$ is the number of edges incident to $v$. The order of $G$ is the number of vertices in $G$. 
In this article, the unique graph with minimum distance spectral radius among trees with given order $n$ and maximum degree $\varDelta$ is determined, where $2\varDelta+1 \leq n \leq \varDelta^{2} +1$ for $\varDelta \geq 3$. This article is a part proof of the conjecture proposed in $\cite{ref1}$.\vspace{1em}

\noindent \textit{Keyword:} distance spectral radius; Perron eigenvector; tree; maximum degree; complete $\varDelta$-ary tree\\
\noindent \textit{Subject classification:} 05C50
\end{abstract}

\section{Introduction}

\indent Let $G=(V,E)$ be a simple connected graph with $n = |V|$. For two vertices $v_{i}$ and $v_{j}$ ($i \ne j$) in $G$, the distance between $v_{i}$ and $v_{j}$, denoted by $d_{ij}$ or $dist(v_{i},v_{j})$, is the length of a shortest path joining $v_{i}$ and $v_{j}$. The diameter of a graph denoted by $diam (G)$ is the maximum distance between any two distinct vertices in $G$. For $v\in V(G)$, the degree of $v$ is the number of adjacent vertices of $v$ and is denoted by $deg(v)$. Let $N_{G}(v)$ be the set of vertices adjacent to $v$, i.e., the neighbours of $v$ in $G$, so $deg(v)= |N_{G}(v)|$.

\indent  The distance matrix of $G$ is the $n\times n$ matrix $D(G)$= ($d_{ij}$). The eigenvalues of $D(G)$ are known as the distance eigenvalues of $G$. Since $D(G)$ is real and symmetric, the distance eigenvalues of $G$ are all real. The distance spectral radius of $G$, denoted by $\rho(G)$, is the largest eigenvalue of $D(G)$, which is simple and positive. Since $D(G)$ is a non-negative irreducible matrix, by Perron-Frobenius Theorem $\cite{ref2}$, there is an eigenvector $x$ corresponding to $\rho(G)$ with positive coordinates, known as the Perron eigenvector of $D(G)$. For a vector $x$,
\begin{center}
$\rho(G) \geq \frac{x^{T}D(G)x}{x^{T}x} = \frac{2\sum\limits_{i=1}^{n}{\sum\limits_{j=i+1}^{n}{x_{v_{i}}x_{v_{j}}d_{ij}}}}{\sum\limits_{i=1}^{n}{x_{v_{i}}^{2}}}$ \\
\end{center}

\noindent with equality if and only if $x$ is the Perron eigenvector of $D(G)$. Throughout the paper, x will always be assumed as a unit vector, i.e., $\sum\limits_{i=1}^{n}{x_{v_{i}}^{2}}=1$.

\indent The study of distance eigenvalues was initiated by Graham et al.$\cite{ref3,ref4,ref5}$. Distance spectral radius can be used as molecular descriptor in QSPR modelling  as discussed by Consonni and Todeschini in $\cite{ref6}$, and to infer the extent of branching and model boiling points of alkanes in $\cite{ref7}$. In $\cite{ref8,ref9,ref10,ref11,ref12}$ authors have discussed in detail about the upper and lower bounds related to distance spectral radius. Subhi and Powers in $\cite{ref20}$ proved that for $n \geq 4$, the path $P_{n}$ has the maximum distance spectral radius among trees on $n$ vertices. Stevanovi\'{c} and Ili\'{c} in $\cite{ref1}$ proved that among trees with order n and given maximum degree $\varDelta$, the broom graph $B_{n,\varDelta}$ (consisting of a star $S_{\varDelta+1}$ and a path of length $n-\varDelta-1$ attached to an arbitrary pendent vertex of the star) is the extremal graph with maximum distance spectral radius. Furthermore, the authors proved that the star $S_{n}$ is the unique graph that minimizes distance spectral radius among tree on $n$ vertices. Bose et al. \cite{ref13} studied and analyzed the graphs which maximize and minimize $\rho(G)$ among the set of all trees with given order $n$ and fixed $r$ pendent vertices. Zhang and Godsil \cite{ref14} determine the graph which minimizes $\rho(G)$ for the set of all graphs with $k$ cut vertices. Yu et al. \cite{ref16} proposed a unique graph that minimizes $\rho(G)$ among the set of all unicyclic graphs on $n$ vertices; similarly Milan Nath and Somnath Paul \cite{ref17} proved a unique graph which minimizes $\rho(G)$ with a fixed matching number among the set of all connected bipartite graphs of order $n$. For detailed study of distance spectra the readers may refer to  \cite{ref13, ref19, ref16, ref14, ref18, ref15}. For more details on distance eigenvalues and especially regarding distance spectral radius, refer to the comprehensive survey of Aouchiche and Hansen $\cite{ref21}$.

\indent This paper is a part proof of the conjecture proposed by Stevanovi\'{c} and Ili\'{c} $\cite{ref1}$. In their paper, a complete $\varDelta$-ary tree is defined as: Start with the root having $\varDelta$ children. Every vertex different from the root, which is not in one of the last two levels, has exactly $\varDelta-1$ children. While in the last level all nodes need not exist, those that do fill the level consecutively (see Figure 1). Thus, at most one vertex on the level before the last has its degree different from $\varDelta$ and 1. They conjectured that a complete $\varDelta$-ary tree has the minimum distance spectral radius among trees on $n$ vertices with fixed maximum degree $\varDelta$. This paper studies the special case of the complete $\varDelta$-ary tree and will be helpful to settle the conjecture which has not been solved for more than a decade.

\begin{center}
\begin{tikzpicture}

\draw (6.5,3) -- (2.5,2) -- (1.5,1) -- (1,0);
\draw (1.5,1) -- (2,0);
\draw (2.5,2) -- (3.5,1) -- (3,0);
\draw (3.5,1) -- (4,0);

\draw (6.5,3) -- (6.5,2) -- (5.5,1) -- (5,0);
\draw (5.5,1) -- (6,0);
\draw (6.5,2) -- (7.5,1) -- (7,0);
\draw (7.5,1) -- (8,0);

\draw (6.5,3) -- (10.5,2) -- (9.5,1) -- (9,0);
\draw (10.5,2) -- (11.5,1);

\filldraw[fill=black,draw=black] (6.5,3) circle [radius=(0.0018)^(0.5)]; 
\filldraw[fill=black,draw=black] (2.5,2) circle [radius=(0.0018)^(0.5)];
\filldraw[fill=black,draw=black] (6.5,2) circle [radius=(0.0018)^(0.5)];
\filldraw[fill=black,draw=black] (10.5,2) circle [radius=(0.0018)^(0.5)];
\filldraw[fill=black,draw=black] (1.5,1) circle [radius=(0.0018)^(0.5)];
\filldraw[fill=black,draw=black] (3.5,1) circle [radius=(0.0018)^(0.5)]; 
\filldraw[fill=black,draw=black] (5.5,1) circle [radius=(0.0018)^(0.5)];
\filldraw[fill=black,draw=black] (7.5,1) circle [radius=(0.0018)^(0.5)]; 
\filldraw[fill=black,draw=black] (9.5,1) circle [radius=(0.0018)^(0.5)];
\filldraw[fill=black,draw=black] (11.5,1) circle [radius=(0.0018)^(0.5)]; 

\filldraw[fill=black,draw=black] (1,0) circle [radius=(0.0018)^(0.5)]; 
\filldraw[fill=black,draw=black] (2,0) circle [radius=(0.0018)^(0.5)]; 
\filldraw[fill=black,draw=black] (3,0) circle [radius=(0.0018)^(0.5)]; 
\filldraw[fill=black,draw=black] (4,0) circle [radius=(0.0018)^(0.5)]; 
\filldraw[fill=black,draw=black] (5,0) circle [radius=(0.0018)^(0.5)]; 
\filldraw[fill=black,draw=black] (6,0) circle [radius=(0.0018)^(0.5)]; 
\filldraw[fill=black,draw=black] (7,0) circle [radius=(0.0018)^(0.5)]; 
\filldraw[fill=black,draw=black] (8,0) circle [radius=(0.0018)^(0.5)]; 
\filldraw[fill=black,draw=black] (9,0) circle [radius=(0.0018)^(0.5)];

\node at (6,-1){$Figure\ 1: The\ complete\ 3-ary\ tree\ of\ order\ 19.$};
\end{tikzpicture}
\end{center}

\indent Wei $\cite{ref22}$ solved the conjecture under the condition of $\lceil \frac{n}{2} \rceil \leq \varDelta \leq n-1$, i.e., $\varDelta +1 \leq n \leq 2\varDelta$. This article further solves the conjecture under a less strict condition of relation between $n$ and $\varDelta$. \\

\indent The main result of the paper can be stated as follows:

\begin{theorem1}
Let $T_{(n,\varDelta)}$ be the tree with order n and maximum degree $\varDelta$, where $2\varDelta+1 \leq n \leq \varDelta^{2} +1$ and $\varDelta \geq 3$. $T\in T_{(n,\varDelta)}$ and $\rho(T)=\rho_{min}(T_{(n,\varDelta)})$. Then $T$ is the complete $\varDelta$-ary tree.
\end{theorem1}

\indent We list several new definitions which plays the base role in proving the theorems by means of transformation. Let $T$ be a tree and $P=v_{1}v_{2}\cdots v_{q}v_{q+1}$ be an arbitrary path of $T$, with $v_{1}$ and $v_{q+1}$ both being pendant vertices.

\begin{enumerate}
\item Let $\frac{1}{2}$-pendant vertex be a vertex which is adjacent to at least one pendant vertex. It's obvious that a $\frac{1}{2}$-pendant vertex can't be a pendant vertex at the same time in a tree with $n \geq 3$.

\item Let $ndp(T)$ be number of diametrical paths of $T$.

\item Let $v_{i(1)}=N_{T}(v_{i})\backslash \{v_{i-1},v_{i+1}\}$, $2\leq i \leq q$.

\item Assume $|v_{i(1)}|=m$, respectively denote them as $v_{i(1)(1)},\ \cdots,\ v_{i(1)(m)}$.

\item Let $G_{v_{i(1)(j)}}^{'}$ be the maximal subtree of $G-\{v_{i}\}$ that contains $v_{i(1)(j)}$.

\item Let $G_{v_{i}}=G_{v_{i(1)(1)}}^{'} \cup G_{v_{i(1)(2)}}^{'}\cup \cdots \cup G_{v_{i(1)(m)}}^{'} \cup v_{i}v_{i(1)(1)}\cup v_{i}v_{i(1)(2)}\cdots \cup v_{i}v_{i(1)(m)}$. $G_{v_{i}}^{p}$ is the subset of $G_{v_{i}}$ which is composed of all pendant vertices in $G_{v_{i}}$.

\item Let $v_{i(2)(j)}=N_{T}(v_{i(1)(j)})\backslash \{v_{i}\}$.

\item Assume $|v_{i(2)(j)}|=k$, respectively denote them as $v_{i(1)(j)(1)(1)},\ \cdots,\ v_{i(1)(j)(1)(k)}$. It's obvious $k = deg(v_{i(1)(j)}) - 1$.

\item Let $G_{v_{i(1)(j)}}=G_{v_{i(1)(j)}}^{'} - v_{i(1)(j)} \cup v_{i(1)(j)}v_{i(1)(j)(1)(1)} \cup v_{i(1)(j)}v_{i(1)(j)(1)(2)}\cdots  \cup v_{i(1)(j)}v_{i(1)(j)(1)(k)}$. $G_{v_{i(1)(j)}}^{p}$ is the subset of $G_{v_{i(1)(j)}}$ which is composed of all pendant vertices in $G_{v_{i(1)(j)}}$.

\item Let $v_{i(2)}=v_{i(2)(1)}\cup \cdots \cup v_{i(2)(m)}$.

\item Let $v_{i(3)(j)}=\{v|v\in G_{v_{i(1)(j)}}^{'}, dist(v,v_{i(1)(j)})=2\}$.

\item Let $v_{i(3)(j)(k)}=\{v|v\in v_{i(3)(j)}, dist(v,v_{i(1)(j)(1)(k)})=1\}$.

\item Assume $|v_{i(3)(j)(k)}|=k_{2}$, respectively denote them as $v_{i(1)(j)(1)(k)(1)(1)},\ \cdots,\ v_{i(1)(j)(1)(k)(1)(k_{2})}$. It's obvious $k_{2} = deg(v_{i(1)(j)(1)(k)}) - 1$.

\item Let $v_{i(3)}=v_{i(3)(1)}\cup \cdots \cup v_{i(3)(m)}$.

\item Pre-tree is a tree with $|v_{i(1)}| + |v_{i+1(1)}|\geq \varDelta-2$, where $v_{i}$ and $v_{i+1}$ are non-pendant neighbours.
\end{enumerate}

\indent Figure 2 clearly shows better understanding of the definitions via an example.
\begin{itemize}
\item $d_{1}=deg(v_{i})-2$, $d_{2}=deg(v_{i(1)(1)})-1$, $d_{3}=deg(v_{i(1)(d_{1})})-1$, $d_{4}=deg(v_{i(1)(1)(1)(1)})-1$.
\item $deg(v_{i(1)(j)})=1$ for $j\in[2,d_{1}-1]$, $deg(v_{i(1)(1)(1)(j)})=1$ for $j\in[2,d_{2}]$, $deg(v_{i(1)(d_{1})(1)(j)})=1$ for $j\in[1,d_{3}]$, $deg(v_{i(1)(1)(1)(1)(1)(j)})=1$ for $j\in[1,d_{4}]$.
\item $v_{i(1)}=v_{i(1)(1)}\cup v_{i(1)(2)}\cup \cdots \cup v_{i(1)(d_{1})}$, $v_{i(2)}=v_{i(2)(1)}\cup v_{i(2)(d_{1})}$, $v_{i(3)}=v_{i(3)(1)}=v_{i(3)(1)(1)}$.
\item Among vertices of $v_{i(1)}$, there're two $\frac{1}{2}$-pendant vertices which are $v_{i(1)(1)}$ and $v_{i(1)(d_{1})}$.
\item Among vertices of $v_{i(2)}$, there's one $\frac{1}{2}$-pendant vertex which is $v_{i(1)(1)(1)(1)}$.
\end{itemize}

\begin{center}
\begin{tikzpicture}
\draw (0,3)node[above]{$v_{i(1)(1)(1)(1)(1)(1)}$} -- (2,2);
\filldraw[fill=black,draw=black] (0,3) circle [radius=(0.0018)^(0.5)]; 
\filldraw[fill=black,draw=black](1.9,3)circle [radius=0.01];
\filldraw[fill=black,draw=black](2,3)circle [radius=0.01];
\filldraw[fill=black,draw=black](2.1,3)circle [radius=0.01]; 
\filldraw[fill=black,draw=black] (4,3) circle [radius=(0.0018)^(0.5)]; 
\draw (4,3)node[above]{$v_{i(1)(1)(1)(1)(1)(d_{4})}$} -- (2,2);

\draw (2,2)node[left]{$v_{i(1)(1)(1)(1)}$} -- (4,1);
\filldraw[fill=black,draw=black] (2,2) circle [radius=(0.0018)^(0.5)]; 
\filldraw[fill=black,draw=black](3.9,2)circle [radius=0.01];
\filldraw[fill=black,draw=black](4,2)circle [radius=0.01];
\filldraw[fill=black,draw=black](4.1,2)circle [radius=0.01]; 
\filldraw[fill=black,draw=black] (6,2) circle [radius=(0.0018)^(0.5)]; 
\draw (6,2)node[above]{$v_{i(1)(1)(1)(d_{2})}$} -- (4,1);

\draw (4,1)node[left]{$v_{i(1)(1)}$} -- (7,0);
\filldraw[fill=black,draw=black] (4,1) circle [radius=(0.0018)^(0.5)]; 
\filldraw[fill=black,draw=black](6.9,0.5)circle [radius=0.01];
\filldraw[fill=black,draw=black](7,0.5)circle [radius=0.01];
\filldraw[fill=black,draw=black](7.1,0.5)circle [radius=0.01]; 
\filldraw[fill=black,draw=black] (7,0) circle [radius=(0.0018)^(0.5)]; 
\draw (10,1) -- (7,0);

\draw (8,2)node[above]{$v_{i(1)(d_{1})(1)(1)}$} -- (10,1)node[right]{$v_{i(1)(d_{1})}$};
\filldraw[fill=black,draw=black] (8,2) circle [radius=(0.0018)^(0.5)]; 
\filldraw[fill=black,draw=black](9.9,2)circle [radius=0.01];
\filldraw[fill=black,draw=black](10,2)circle [radius=0.01];
\filldraw[fill=black,draw=black](10.1,2)circle [radius=0.01]; 
\filldraw[fill=black,draw=black] (12,2) circle [radius=(0.0018)^(0.5)]; 
\filldraw[fill=black,draw=black] (10,1) circle [radius=(0.0018)^(0.5)]; 
\draw (12,2)node[above]{$v_{i(1)(d_{1})(1)(d_{3})}$} -- (10,1);

\draw (1,0)node[below]{$v_{1}$} -- (3,0)node[below]{$v_{2}$};
\filldraw[fill=black,draw=black] (1,0) circle [radius=(0.0018)^(0.5)]; 
\filldraw[fill=black,draw=black] (3,0) circle [radius=(0.0018)^(0.5)]; 
\filldraw[fill=black,draw=black](3.9,0)circle [radius=0.01];
\filldraw[fill=black,draw=black](4,0)circle [radius=0.01];
\filldraw[fill=black,draw=black](4.1,0)circle [radius=0.01]; 
\draw (5,0)node[below]{$v_{i-1}$} -- (7,0)node[below]{$v_{i}$} -- (9,0)node[below]{$v_{i+1}$};
\filldraw[fill=black,draw=black] (5,0) circle [radius=(0.0018)^(0.5)];
\filldraw[fill=black,draw=black] (7,0) circle [radius=(0.0018)^(0.5)];  
\filldraw[fill=black,draw=black] (9,0) circle [radius=(0.0018)^(0.5)]; 
\filldraw[fill=black,draw=black](9.9,0)circle [radius=0.01];
\filldraw[fill=black,draw=black](10,0)circle [radius=0.01];
\filldraw[fill=black,draw=black](10.1,0)circle [radius=0.01]; 
\draw (11,0)node[below]{$v_{q}$} -- (13,0)node[below]{$v_{q+1}$};
\filldraw[fill=black,draw=black] (11,0) circle [radius=(0.0018)^(0.5)]; 
\filldraw[fill=black,draw=black] (13,0) circle [radius=(0.0018)^(0.5)]; 

\draw[dashed] (-1.3,3.5) rectangle (6.9,0.9);
\node at (-2,2){$G_{v_{i(1)(1)}}^{'}$};
\draw[dashed] (7,2.5) rectangle (13,0.9);
\node at (14,1.5){$G_{v_{i(1)(d_{1})}}^{'}$};
\node at (6,-1){$T$};
\node at (6,-1.5){$Figure\ 2: The\ graph\ T.$};
\end{tikzpicture}
\end{center}

\section{Preliminaries}

\begin{theorem}\label{lem1}
$\cite{ref1}$ Denote by $G(v,k)$ the graph obtained from $G$ by attaching at the vertex $v$ a pendent path $P = vv_{1}v_{2} \cdots v_{k}$ of length $k$. Let $x_{v_{0}}$, $x_{v_{1}}$, $x_{v_{2}},\cdots, x_{v_{k}}$ be the coordinates of the Perron eigenvector for vertices $v, v_{1}, v_{2},\cdots, v_{k}$ respectively, and let $S$ be the sum of all coordinates. Then,
$$\sum\limits_{i=1}^k{x_{v_{i}}}=x_{v_{0}}\cdot f(k)+\frac{S}{\rho}\cdot g(k)$$\\
where $f(x)=\frac{t(t^{2x}-1)}{(1+t^{2x+1})(t-1)}$ and $g(x)=\frac{t(t^{x}-1)(t^{x+1}-1)}{(1+t^{2x+1})(t-1)^{2}}$,
with only solution for corresponding recurrent equation being $t=1+\frac{1}{\rho}+\frac{\sqrt{2\rho+1}}{\rho}$. Furthermore, the functions $f(x)$ and $g(x)$
are monotonically increasing.
\end{theorem}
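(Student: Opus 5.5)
The plan is to write down the eigenvalue equations of $D(G(v,k))$ restricted to the pendent path, reduce them to a linear recurrence with constant coefficients, solve the recurrence explicitly, and then sum. Write $x_i=x_{v_i}$ for $0\le i\le k$, so $x_0=x_{v_0}$ is the coordinate of $v$. Let $S$ be the sum of all coordinates. For each vertex $u$ let $\sigma(u)=\rho x_u=\sum_w d(u,w)x_w$. For consecutive path vertices $v_{i-1},v_i$ with $1\le i\le k$, removing the edge $v_{i-1}v_i$ splits the tree into the tail $A_i=\{v_i,\dots,v_k\}$ and its complement. Every vertex of $A_i$ is one step closer to $v_i$ than to $v_{i-1}$, and every other vertex is one step farther. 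This gives
$$\rho(x_{i}-x_{i-1}) = S-2\sum_{j\ge i}x_j .$$
Taking a second difference for $1\le i\le k-1$ yields the three-term recurrence
$$\rho(x_{i+1}-2x_i+x_{i-1})=2x_i,$$
so $x_{i+1}-(2+\tfrac{2}{\rho})x_i+x_{i-1}=0$. Its characteristic equation $t^2-(2+\frac2\rho)t+1=0$ has roots $t$ and $1/t$ with $t=1+\frac1\rho+\frac{\sqrt{2\rho+1}}{\rho}>1$. Hence $x_i=\alpha t^i+\beta t^{-i}$ for $0\le i\le k$.

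Next I fix $\alpha,\beta$ from two boundary conditions. The first is $x_0=\alpha+\beta$. The second comes from the pendent end: the edge relation at $i=k$ reads $\rho(x_k-x_{k-1})=S-2x_k$. Substituting the closed form gives a linear equation in $\alpha,\beta$ involving $S/\rho$. Solving the $2\times 2$ system, then summing the geometric series $\sum_{i=1}^k(\alpha t^i+\beta t^{-i})$, should produce an expression linear in $x_0$ and $S/\rho$. Along the way I would repeatedly use $\frac{(t-1)^2}{t}=\frac{2}{\rho}$ (from the characteristic equation) to eliminate $\rho$ in favour of $t$. The factor $1+t^{2k+1}$ should emerge as the determinant of the system. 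I expect this algebraic simplification to be the main obstacle. The way to handle it is to keep everything in terms of $t$ and clear denominators carefully, then check the coefficients against $k=1$. For $k=1$ the edge relation gives $x_1(\rho+2)=\rho x_0+S$, and one verifies that $f(1)=\frac{t(t+1)}{1+t^3}$ and $g(1)=\frac{t(t^2-1)}{(1+t^3)(t-1)}$ reproduce $x_1=\frac{\rho}{\rho+2}x_0+\frac{S}{\rho}\cdot\frac{\rho}{\rho+2}$ after using the identity for $\rho$.

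Finally comes monotonicity in $x$ for fixed $t>1$. For $f$, the ratio $\frac{t^{2x}-1}{1+t^{2x+1}}$ has derivative proportional to $2\ln t\, t^{2x}(1+t^{2x+1})-(t^{2x}-1)(2x+1)'$-type terms. It is easier to write $\frac{t^{2x}-1}{1+t^{2x+1}}=\frac1t\bigl(1-\frac{1+t}{1+t^{2x+1}}\bigr)$, which is visibly increasing. For $g$, I would write $\frac{(t^x-1)(t^{x+1}-1)}{1+t^{2x+1}}=1-\frac{t^x+t^{x+1}}{1+t^{2x+1}}$ and set $u=t^{x+1/2}$. This gives $1-\frac{(t^{1/2}+t^{-1/2})u}{1+u^2}$, and $\frac{u}{1+u^2}$ is decreasing for $u>1$, so $g$ increases.
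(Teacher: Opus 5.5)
Your proposal is correct. Note that the paper gives no proof of this lemma; it quotes it from \cite{ref1}. Your argument is the standard difference-equation derivation: first differences across the cut edges $v_{i-1}v_i$ (this is exactly Lemma~\ref{lem2}), then second differences giving $x_{i+1}-(2+\frac{2}{\rho})x_i+x_{i-1}=0$ with characteristic roots $t^{\pm1}$, then the two boundary conditions.

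The computation you left as ``should produce'' does close cleanly. Using $\rho(t-1)=\frac{2t}{t-1}$, the end condition $\rho(x_k-x_{k-1})=S-2x_k$ reduces to $\alpha t^{k+1}-\beta t^{-k}=\frac{(t-1)S}{2}$, i.e.\ $\alpha t^{2k+1}-\beta=\frac{S}{\rho}\cdot\frac{t^{k+1}}{t-1}$. Together with $\alpha+\beta=x_0$ this gives
\[
\alpha=\frac{x_0+\frac{S}{\rho}\cdot\frac{t^{k+1}}{t-1}}{1+t^{2k+1}},\qquad \beta=\frac{x_0t^{2k+1}-\frac{S}{\rho}\cdot\frac{t^{k+1}}{t-1}}{1+t^{2k+1}}.
\]
Now use $\sum_{i=1}^k t^i=\frac{t(t^k-1)}{t-1}$ and $\sum_{i=1}^k t^{-i}=\frac{t^k-1}{t^k(t-1)}$. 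The coefficient of $x_0$ is $\frac{(t^k-1)(t+t^{k+1})}{(1+t^{2k+1})(t-1)}=f(k)$. The coefficient of $S/\rho$ is $\frac{t^{k+1}(t^k-1)(t-t^{-k})}{(1+t^{2k+1})(t-1)^2}=g(k)$, since $t^{k+1}(t-t^{-k})=t(t^{k+1}-1)$.

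Your $k=1$ check is also right. Your monotonicity arguments, via $\frac{1}{t}\bigl(1-\frac{1+t}{1+t^{2x+1}}\bigr)$ and $1-\frac{(t^{1/2}+t^{-1/2})u}{1+u^2}$, are correct. The second one needs $u=t^{x+1/2}>1$, i.e.\ $x>-\frac{1}{2}$, which covers every relevant $x\ge 0$. You can simply delete the garbled derivative remark that precedes them.
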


\begin{theorem}\label{lem2}
$\cite{ref23}$ Let $G$ be a graph that contains a cut edge $e = v_{i}v_{i+1}$. Let $x$ be the Perron eigenvector of $G$. We assume that the coordinates of $x$ are indexed by the vertices of $G$. Then
$$\rho(G)\cdot(x_{v_{i}}-x_{v_{i+1}})=S_{i+1}-S_{i},$$
where $S_{i+1}$($S_{i}$, respectively) is the sum of all coordinates of $x$ corresponding to the vertices in the component that contains vertex $v_{i+1}$ ($v_{i}$, respectively), after removing the bridge $v_{i}v_{i+1}$ in $G$.
\end{theorem}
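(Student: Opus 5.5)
The plan is to subtract the two eigen-equations of $D(G)$ at the endpoints of the cut edge $e=v_{i}v_{i+1}$. Removing $e$ splits $G$ into two components. Let $A$ be the component containing $v_{i}$ and $B$ the one containing $v_{i+1}$, so that $S_{i}=\sum_{w\in A}x_{w}$ and $S_{i+1}=\sum_{w\in B}x_{w}$. Since $D(G)x=\rho(G)x$, for every vertex $u$ we have $\rho(G)x_{u}=\sum_{w\in V(G)}d(u,w)x_{w}$.

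The key step is a distance comparison, which relies on $e$ being a bridge. Every path from a vertex of $B$ to a vertex of $A$ must use $e$. Hence for $w\in A$ a shortest path from $v_{i+1}$ to $w$ is $e$ followed by a shortest $v_{i}$--$w$ path, so $d(v_{i+1},w)=d(v_{i},w)+1$. Symmetrically, for $w\in B$ we get $d(v_{i},w)=d(v_{i+1},w)+1$. These identities also hold at the endpoints themselves: $w=v_{i}$ gives $1=0+1$, and $w=v_{i+1}$ gives $1=0+1$. Thus
$$d(v_{i},w)-d(v_{i+1},w)=\begin{cases}-1, & w\in A,\\ \ \ 1, & w\in B.\end{cases}$$

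Next, write the eigen-equations at $u=v_{i}$ and $u=v_{i+1}$ and subtract them:
$$\rho(G)\,(x_{v_{i}}-x_{v_{i+1}})=\sum_{w\in V(G)}\bigl(d(v_{i},w)-d(v_{i+1},w)\bigr)x_{w}=-\sum_{w\in A}x_{w}+\sum_{w\in B}x_{w}=S_{i+1}-S_{i}.$$

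There is no real obstacle here. The only point needing care is the justification that distances change by exactly $\pm1$ across the bridge, including the two endpoint terms. The identity holds for any eigenpair, not just the Perron one, although positivity is what makes it useful later.
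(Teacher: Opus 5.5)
Your proof is correct. Subtracting the eigen-equations $\rho(G)x_u=\sum_w d(u,w)x_w$ at $u=v_i$ and $u=v_{i+1}$ gives the result, because $d(v_i,w)-d(v_{i+1},w)=\mp 1$ according as $w$ lies on the $v_i$-side or the $v_{i+1}$-side of the bridge (including the endpoints). The paper gives no proof of its own, only a citation to Stevanovi\'{c} and Ili\'{c}, and your argument is the standard one behind that result.
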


\begin{theorem}\label{lem3}
\cite{ref20} Let $G$ be a graph and $x$ be the Perron eigenvector of $G$. Assume $u$,$v$,$w \in V(G)$ and $uv$,$vw \in E(G)$. If $x_{u} < x_{v}$ and one of the edges $uv$ and $vw$ is a cut edge, then $x_{v} < x_{w}$.
\end{theorem}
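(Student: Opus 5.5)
The plan is to work directly with the eigen-equation $\rho x_a=\sum_{z}d(a,z)x_z$ and to compare the differences $x_v-x_u$ and $x_w-x_v$ term by term. I assume $u$, $v$, $w$ are distinct, as the statement intends. For adjacent vertices $a,b$ the triangle inequality gives
\[
\rho(x_b-x_a)=\sum_{z\in V(G)}\bigl(d(b,z)-d(a,z)\bigr)x_z ,
\qquad \bigl|d(b,z)-d(a,z)\bigr|\le 1 .
\]
Moreover, if $ab$ is a cut edge, the coefficient $d(b,z)-d(a,z)$ is exactly $+1$ on the component of $G-ab$ containing $a$ and exactly $-1$ on the other component. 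This is the content of Lemma \ref{lem2}. I split into two cases according to which edge is the cut edge.

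\emph{Case 1: $uv$ is a cut edge.} Let $A$ be the component of $G-uv$ containing $u$, and $B$ the one containing $v$. Since $w$ is adjacent to $v$ and $w\ne u$, we have $w\in B$. Lemma \ref{lem2} gives $\rho(x_v-x_u)=S_A-S_B$, so the hypothesis $x_u<x_v$ forces $S_A>S_B$.
\begin{itemize}
\item For $z\in A$: every $w$--$z$ path passes through $v$ and then the edge $vu$. Hence $d(w,z)=1+d(v,z)$, and the coefficient is $+1$.
\item For $z\in B$: the coefficient $d(w,z)-d(v,z)$ is at least $-1$. At $z=v$ it equals $+1$.
\end{itemize}
Adding these up gives $\rho(x_w-x_v)\ge S_A-(S_B-x_v)+x_v=S_A-S_B+2x_v>0$.

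\emph{Case 2: $vw$ is a cut edge.} Let $C$ be the component of $G-vw$ containing $v$ (so $u\in C$), and $D$ the one containing $w$. Lemma \ref{lem2} gives $\rho(x_w-x_v)=S_C-S_D$. Now bound $\rho(x_v-x_u)$ from above.
\begin{itemize}
\item For $z\in D$: every $u$--$z$ path passes through $v$. Hence $d(v,z)-d(u,z)=-1$.
\item For $z\in C$: the coefficient is at most $1$. At $z=v$ it equals $-1$.
\end{itemize}
This gives
\[
0<\rho(x_v-x_u)\le (S_C-x_v)-x_v-S_D < S_C-S_D=\rho(x_w-x_v),
\]
so again $x_v<x_w$.

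The only delicate point is the exact distance identities across a cut edge: every path from one side to the other must use that edge. Once those are in place, the two cases are short bookkeeping. Strictness comes from the positivity of the Perron vector (the $2x_v$ terms) and from the hypothesis $x_u<x_v$.
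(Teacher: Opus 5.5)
Your proof is correct. The paper gives no proof of this lemma: it quotes it from Subhi and Powers \cite{ref20}. So there is no internal argument to compare against, and your write-up is a complete, self-contained justification. It uses only the eigen-equation and the exact distance identities across a cut edge (the content of Lemma~\ref{lem2}).

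It is worth noticing that your two cases are one computation. Subtracting the eigen-equations gives
\[
\rho\bigl[(x_w-x_v)-(x_v-x_u)\bigr]=\sum_{z\in V(G)}\bigl(d(u,z)+d(w,z)-2d(v,z)\bigr)x_z .
\]
The cut-edge hypothesis makes each coefficient equal to $0$ on the component of the cut edge away from $v$. It is nonnegative on $v$'s side, by the exact identity for one term and the triangle inequality for the other, and it equals $2$ at $z=v$.

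Your bounds in Case 1 and Case 2 are exactly this. Without saying so, you have proved the stronger convexity statement $\rho(x_w-x_v)\ge\rho(x_v-x_u)+2x_v$, i.e.\ $x_w-x_v>x_v-x_u$. This holds with no assumption on the sign of $x_v-x_u$, and the hypothesis $x_u<x_v$ is used only in the final step. The same form is what makes iterating along a path (as in Lemma~\ref{lem5} for trees) immediate.

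Two minor points. Your assumption $u\neq w$ is genuinely needed, since the statement is false for $u=w$, and connectedness of $G$ is implicit. Also, strictness in both of your cases already comes from $x_v-x_u>0$, so nonnegativity of $x_v$ would suffice there.
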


\begin{theorem}\label{lem4}
$\cite{ref20}$ Let $G$ be a graph and $x$ be the Perron eigenvector of $G$. Assume $v$,$w \in V(G)$ and $vw \in E(G)$. If $w$ is a pendant vertex and $v$ is a non-pendant vertex, then $x_{w} > x_{v}$.
\end{theorem}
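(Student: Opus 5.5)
The plan is to compare the eigenvalue equations of $D(G)$ at the two endpoints $w$ and $v$ directly. This needs only the Perron--Frobenius positivity of $x$ and the fact that all shortest paths from $w$ pass through $v$. (Lemma \ref{lem2} would give the same identity, since $vw$ is a cut edge, but the direct computation is just as short.)

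Write $R=\sum_{u\neq v,w}x_u$ for the sum of the coordinates of all vertices other than $v$ and $w$. Since $w$ is pendant with unique neighbour $v$, every $u\neq w$ satisfies $d(w,u)=d(v,u)+1$. Also $d(v,w)=d(w,v)=1$. The eigen-equations at $w$ and $v$ therefore read
\begin{align*}
\rho\, x_w &= x_v+\sum_{u\neq v,w}\bigl(d(v,u)+1\bigr)x_u,\\
\rho\, x_v &= x_w+\sum_{u\neq v,w} d(v,u)\,x_u .
\end{align*}
Subtracting the second from the first gives $\rho(x_w-x_v)=x_v-x_w+R$, that is,
\begin{equation*}
(\rho+1)(x_w-x_v)=R .
\end{equation*}

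It remains to see that $R>0$. Because $v$ is non-pendant, it has a neighbour $u\neq w$, so the set of vertices other than $v,w$ is nonempty. All coordinates of the Perron eigenvector are strictly positive, so $R\ge x_u>0$. Since $\rho>0$, this forces $x_w-x_v>0$, which is the claim.

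The only point needing care, and the one I would double-check, is the distance identity $d(w,u)=d(v,u)+1$ for all $u\neq w$. It holds in any connected graph, not just trees: every path leaving the pendant vertex $w$ must first go to $v$. With that identity the subtraction is immediate, so I expect no real obstacle.
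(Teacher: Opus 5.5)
Your proof is correct and complete. Subtracting the eigen-equations at $w$ and $v$ gives $(\rho+1)(x_w-x_v)=\sum_{u\neq v,w}x_u>0$, which is the identity Lemma~\ref{lem2} yields for the cut edge $vw$. The paper gives no proof here and only cites Subhi and Powers~\cite{ref20}; your computation is the standard argument that citation relies on, so there is nothing to add.
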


\begin{theorem}\label{lem5}
$\cite{ref20}$ Let $T$ be a tree with order $n\geq3$ and $x$ be the Perron eigenvector of $T$. Assume $v_{1}v_{2}\cdots v_{q}$ be an arbitrary path in $T$, with $v_{q}$ be a pendant vertex. If $x_{v_{1}} < x_{v_{2}}$, then $x_{v_{2}} < x_{v_{3}} < \cdots < x_{v_{q}}$.
\end{theorem}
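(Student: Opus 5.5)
The plan is to derive the statement by repeatedly applying Lemma \ref{lem3} along the path, using the fact that in a tree every edge is a cut edge.

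First, since $T$ is a tree, every edge $v_{j}v_{j+1}$ of the path $v_{1}v_{2}\cdots v_{q}$ is a cut edge of $T$. So the hypothesis of Lemma \ref{lem3} that one of the two consecutive edges is a cut edge holds automatically for every triple $v_{j-1},v_{j},v_{j+1}$ with $2\leq j\leq q-1$. If $q\leq 2$ there is nothing to prove, so assume $q\geq 3$.

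I would then argue by induction on $j$. The claim is that $x_{v_{j}}<x_{v_{j+1}}$ for all $1\leq j\leq q-1$. The base case $j=1$ is the hypothesis $x_{v_{1}}<x_{v_{2}}$. Suppose $x_{v_{j-1}}<x_{v_{j}}$ for some $2\leq j\leq q-1$. Apply Lemma \ref{lem3} with $u=v_{j-1}$, $v=v_{j}$, $w=v_{j+1}$; here $uv,vw\in E(T)$ and both are cut edges, so $x_{v_{j}}<x_{v_{j+1}}$. Chaining these inequalities gives $x_{v_{2}}<x_{v_{3}}<\cdots<x_{v_{q}}$. The pendancy of $v_{q}$ is not needed for the chain itself. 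It only guarantees that the path is maximal in that direction, and it is consistent with Lemma \ref{lem4} for the last step, since $v_{q-1}$ is non-pendant when $q\geq 3$.

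There is no real obstacle, since Lemma \ref{lem3} does all the work. If one wants to see why Lemma \ref{lem3} holds in the tree setting, I would use Lemma \ref{lem2}. Let $S_{j}$ and $S_{j+1}$ denote the coordinate sums of the two components of $T-v_{j}v_{j+1}$. Then $\rho(x_{v_{j}}-x_{v_{j+1}})=S_{j+1}-S_{j}$, so $x_{v_{j}}<x_{v_{j+1}}$ is equivalent to the component on the $v_{j}$ side having the larger coordinate sum. Passing from the edge $v_{j-1}v_{j}$ to the edge $v_{j}v_{j+1}$ strictly enlarges the $v_{j}$-side component: it gains $v_{j}$ and its branches off the path, all of which have positive coordinates. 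At the same time the other side strictly shrinks. So the sign of the difference is preserved and becomes strictly larger in magnitude, which is exactly the propagated inequality.
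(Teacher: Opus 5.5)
Your proof is correct. The paper gives no argument of its own here: Lemma~\ref{lem5} is simply quoted from Subhi and Powers \cite{ref20}, as is Lemma~\ref{lem3}, so there is no in-paper route to compare against.

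Your induction shows that the separate citation is redundant. Every edge of a tree is a bridge, so the lemma is just Lemma~\ref{lem3} iterated along $v_{1}v_{2}\cdots v_{q}$. As you note, the hypothesis that $v_{q}$ is pendant is never used.

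Your last paragraph goes further and removes the dependence on Lemma~\ref{lem3} entirely, giving a self-contained proof from Lemma~\ref{lem2}. Write $L_{k}$ and $R_{k+1}$ for the coordinate sums of the components of $T-v_{k}v_{k+1}$ containing $v_{k}$ and $v_{k+1}$. Positivity of $x$ gives $L_{j}\geq L_{j-1}+x_{v_{j}}>L_{j-1}$ and $R_{j+1}\leq R_{j}-x_{v_{j}}<R_{j}$. Hence $L_{j-1}>R_{j}$ (i.e.\ $x_{v_{j-1}}<x_{v_{j}}$) forces $L_{j}>R_{j+1}$ (i.e.\ $x_{v_{j}}<x_{v_{j+1}}$).

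The only thing to tidy is the phrase ``the $v_{j}$-side component'', which should be read as the component on the $v_{1}$ side of the current edge. With that reading the monotonicity argument is exactly right.
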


\begin{theorem}\label{lem6}
$\cite{ref24}$ Let $G$ and $G^{*}$ be graphs as shown in Figure 3. If $deg_{G}(u)$ and $deg_{G}(v)\geq2$, then $\rho(G)>\rho(G^{*})$.
\end{theorem}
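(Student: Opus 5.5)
Since Figure 3 is not reproduced here, I assume it shows the standard configuration of \cite{ref24}. In $G$, $uv$ is a cut edge. Deleting it leaves a component containing $u$ and one containing $v$. Let $A$ be the vertex set of the $u$-component without $u$, and $B$ that of the $v$-component without $v$. The hypothesis $deg_{G}(u),deg_{G}(v)\geq 2$ says exactly that $A\neq\emptyset$ and $B\neq\emptyset$. The graph $G^{*}$ is obtained by contracting $uv$ to a single vertex $w$ adjacent to all of $A\cup B$'s former neighbours of $u$ and $v$, and then attaching a new pendant vertex $w'$ at $w$. (If Figure 3 describes a different move, the same Rayleigh-quotient scheme below should still apply, with a different comparison vector.)

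The plan is to compare Rayleigh quotients using the Perron vector of $G^{*}$.
\begin{enumerate}
\item Let $x$ be the unit Perron vector of $D(G^{*})$, let $\rho^{*}=\rho(G^{*})$, and let $S$ be the sum of its coordinates. Put $S_A=\sum_{a\in A}x_a$ and $S_B=\sum_{b\in B}x_b$.
\item Transplant $x$ to $G$: each vertex of $A\cup B$ keeps its value, $u$ receives $x_w$, and $v$ receives $x_{w'}$. Call this vector $y$; it is still a unit vector.
\item Compare distances pair by pair.
 \begin{itemize}
 \item Pairs inside $A\cup\{u\}$, inside $B$, the pair $\{u,v\}$, and the pairs $\{v,a\}$ with $a\in A$ have the same distance in both graphs.
 \item Each pair $\{a,b\}$ with $a\in A$, $b\in B$ gains $1$.
 \item Each pair $\{u,b\}$ gains $1$.
 \item Each pair $\{v,b\}$ loses $1$, compared with $\{w',b\}$ in $G^{*}$.
 \end{itemize}
 Hence
 $$y^{T}D(G)y-x^{T}D(G^{*})x=2S_B\,(S_A+x_w-x_{w'}).$$
\item Interchange the roles of $u$ and $v$ in step 2, giving $u$ the value $x_{w'}$ and $v$ the value $x_w$. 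The same computation gives the difference $2S_A(S_B+x_w-x_{w'})$.
\end{enumerate}

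It therefore suffices to show that $\max(S_A,S_B)>x_{w'}-x_w$. Apply Lemma \ref{lem2} to the cut edge $ww'$ of $G^{*}$:
$$\rho^{*}(x_{w'}-x_w)=(S-x_{w'})-x_{w'},\qquad\text{so}\qquad x_{w'}-x_w=\frac{S-2x_{w'}}{\rho^{*}}.$$
We also have $S_A+S_B=S-x_w-x_{w'}$, so
$$\max(S_A,S_B)\geq \tfrac12\,(S-x_w-x_{w'}).$$
The remaining task is to prove
$$\tfrac12\,(S-x_w-x_{w'})>\frac{S-2x_{w'}}{\rho^{*}}.$$

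This inequality is the step I expect to be the main obstacle. For $\rho^{*}$ larger than roughly $2$ it is nearly automatic, but it needs care because $x_w$ and $x_{w'}$ are not negligible when the graph is small. I would try two ingredients.
\begin{itemize}
\item A lower bound on $\rho^{*}$. For example, $\rho^{*}\geq \mathbf{1}^{T}D\mathbf{1}/n$, and $G^{*}$ has at least $4$ vertices and diameter at least $2$.
\item Bounds on $x_w$ and $x_{w'}$ in terms of $S$, obtained from the eigen-equations at $w$ and $w'$. In particular $\rho^{*}x_{w'}=\sum_{z\neq w'}d(w',z)x_z\geq S-x_{w'}$.
\end{itemize}
If the averaged bound is too weak, I would instead work with the larger of $S_A$ and $S_B$ directly. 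I would also use that each of them is at least the Perron coordinate of a vertex adjacent to $w$, which is comparable to $x_w$ by Lemma \ref{lem2} applied to that cut edge.

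Once one of the two differences is positive, the corresponding unit vector $y$ satisfies
$$\rho(G)\geq y^{T}D(G)y>x^{T}D(G^{*})x=\rho^{*},$$
which is the claim.
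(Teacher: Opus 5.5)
Your reading of Figure 3 is right ($G^{*}$ contracts the cut edge $uv$ and hangs a new pendant vertex at the merged vertex), and your distance bookkeeping is correct. The two transplanted unit vectors give differences $2S_B(S_A+x_w-x_{w'})$ and $2S_A(S_B+x_w-x_{w'})$, so the lemma does reduce to $\max(S_A,S_B)>x_{w'}-x_w$. (The paper gives no proof of its own here; it only cites \cite{ref24}.)

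The gap is that you stop at exactly this inequality, call it the main obstacle, and only list ingredients to try, so the argument as written is unfinished. It is, however, a one-line consequence of the two identities you already have; no separate bounds on $x_w$ or $x_{w'}$ are needed. Adding $x_{w'}-x_w$ to both sides of $\rho^{*}(x_{w'}-x_w)=S-2x_{w'}$ gives $(\rho^{*}+1)(x_{w'}-x_w)=S-x_w-x_{w'}=S_A+S_B$. Hence
\[
\max(S_A,S_B)-(x_{w'}-x_w)\geq (S_A+S_B)\Bigl(\frac12-\frac{1}{\rho^{*}+1}\Bigr)=\frac{(\rho^{*}-1)(S_A+S_B)}{2(\rho^{*}+1)}>0,
\]
since $\rho^{*}\geq \mathbf{1}^{T}D(G^{*})\mathbf{1}/n\geq n-1\geq 3$. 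Put differently, your averaged inequality is equivalent to $(\rho^{*}-2)(S_A+S_B)>2(x_w-x_{w'})$. Its left side is positive and its right side is negative, so the concern about small graphs does not arise.

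Your fallback idea also needs adjusting. Lemma~\ref{lem6} is stated for arbitrary graphs, so for a neighbour $a\in A$ of $w$ the edge $wa$ need not be a cut edge, and Lemma~\ref{lem2} is unavailable. The eigen-equation at $a$ works directly, though. Using $d_{G^{*}}(a,w')=2$ and $d_{G^{*}}(a,b)\geq 2$ for $b\in B$, one gets $\rho^{*}x_a\geq S-x_a+x_{w'}+S_B$. Subtracting $\rho^{*}(x_{w'}-x_w)=S-2x_{w'}$ yields $(\rho^{*}+1)(x_a+x_w-x_{w'})\geq 2x_{w'}+x_w+S_B>0$. Thus $S_A\geq x_a>x_{w'}-x_w$ always, and your first transplant alone already proves $\rho(G)>\rho(G^{*})$, without the symmetric second one.
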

\begin{center}
\begin{tikzpicture}
\draw (1,0) ellipse[x radius=1,y radius=0.3];

\draw (2,0)node[above]{$u$} -- (2.5,0)node[above]{$v$};

\filldraw[fill=black,draw=black] (2,0) circle [radius=(0.0018)^(0.5)]; 

\filldraw[fill=black,draw=black] (2.5,0) circle [radius=(0.0018)^(0.5)]; 

\draw (3.5,0) ellipse[x radius=1,y radius=0.3];

\node at (2.25,-0.5){$G$};

\draw (8,0) ellipse[x radius=1,y radius=0.3];

\draw (9,0)node[left]{$u$} -- (9,0.5)node[above]{$v$};

\filldraw[fill=black,draw=black] (9,0) circle [radius=(0.0018)^(0.5)]; 

\filldraw[fill=black,draw=black] (9,0.5) circle [radius=(0.0018)^(0.5)]; 

\draw (10,0) ellipse[x radius=1,y radius=0.3];

\node at (9,-0.5){$G^{*}$};

\node at (5.625,-1.5){$Figure\ 3: The\ graphs\ G\ and\ G^{*}\ in\ Lemma\ 6.$};
\end{tikzpicture}
\end{center}

\begin{theorem}\label{lem7}
\cite{ref25} Let $G_{1}$ and $G_{2}$ be trees as shown in Figure 4. If $deg_{G_{1}}(u)$ and $deg_{G_{1}}(v)\geq2$, then $\rho(G_{1})>\rho(G_{2})$.
\end{theorem}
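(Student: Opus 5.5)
The plan is to compare Rayleigh quotients, with the Perron vector of the tree expected to have the \emph{smaller} spectral radius used as the test vector. Figure~4 is not reproduced here, so I take $G_{1}$ and $G_{2}$ to be the trees I would expect from \cite{ref25}. In $G_{1}$ the edge $uv$ is a cut edge. $A$ is the branch at $u$ not containing $v$, with $u\in A$ and $|A|\geq 2$ because $deg_{G_{1}}(u)\geq 2$. $B$ is the set of vertices in the branches hanging at $v$ other than through $u$. $G_{2}$ is obtained by detaching all these $v$-branches and re-attaching them at $u$, so that $v$ becomes pendant at $u$. If Figure~4 shows a different configuration, the plan below needs rewording, but the method should carry over.

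First, let $x$ be the unit Perron vector of $D(G_{2})$. Then
$$\rho(G_{2})=x^{T}D(G_{2})x \quad\text{and}\quad \rho(G_{1})\geq x^{T}D(G_{1})x .$$
It therefore suffices to show $x^{T}(D(G_{1})-D(G_{2}))x>0$.

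Second, I would record how distances change when passing from $G_{1}$ to $G_{2}$:
\begin{itemize}
\item for $a\in A$ and $b\in B$ the distance drops by exactly $1$;
\item for $b\in B$ the distance from $v$ to $b$ rises by exactly $1$;
\item all other distances, namely inside $A$, inside $B$, and from $v$ to $A$, are unchanged.
\end{itemize}
Writing $S_{A}=\sum_{a\in A}x_{a}$ and $S_{B}=\sum_{b\in B}x_{b}$, this gives
$$x^{T}(D(G_{1})-D(G_{2}))x = 2S_{B}\,(S_{A}-x_{v}).$$
Since $S_{B}>0$ by $deg_{G_{1}}(v)\geq 2$, the whole problem reduces to proving $S_{A}>x_{v}$ for the Perron vector of $G_{2}$.

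Third, and this is the step I expect to be the main obstacle, I must show $S_{A}>x_{v}$. In $G_{2}$ the vertex $v$ is pendant at $u$. Lemma~\ref{lem4} gives $x_{v}>x_{u}$, so the single term $x_{u}$ in $S_{A}$ is not enough. Applying Lemma~\ref{lem2} to the cut edge $uv$ of $G_{2}$ gives
$$\rho(G_{2})(x_{v}-x_{u})=(S-x_{v})-x_{v},$$
hence $x_{v}\leq x_{u}+S/\rho(G_{2})$.

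Because $|A|\geq 2$, there is a neighbour $w\in A$ of $u$, and I would aim to prove $x_{w}\geq S/\rho(G_{2})$. For this I use the eigen-equation at $w$,
$$\rho x_{w}=\sum_{z}d(w,z)x_{z}.$$
Every $z\neq w$ has $d(w,z)\geq 1$, so $\rho x_{w}\geq S-x_{w}$. This nearly suffices, falling short only by the term $x_{w}$ on the right.

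To close this gap I would use that $w$ is at distance at least $2$ from $v$ and from every vertex of $B$. This gives
$$\rho x_{w}\geq S-x_{w}+x_{v}+S_{B}>S,$$
where the last inequality uses Lemma~\ref{lem4} to bound $x_{w}$ by a pendant coordinate when $w$ is not itself pendant; if $w$ is pendant, I would compare directly with $x_{v}$ by symmetry. It then follows that $S_{A}\geq x_{u}+x_{w}>x_{v}$, the difference above is strictly positive, and $\rho(G_{1})>\rho(G_{2})$.
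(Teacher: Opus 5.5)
\textbf{The proposal analyses the wrong configuration.} The paper gives no proof of this lemma; it is quoted from \cite{ref25}. Figure~4, however, is explicit, and it is not a cut edge $uv$ with the $v$-branches moved to $u$. That is the situation of Lemma~\ref{lem6} (Figure~3), and it is essentially what your argument proves.

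In Figure~4 the vertices $u$ and $v$ are at distance $2$:
\begin{itemize}
\item $G_1$ contains a path $u\,c\,v$, where $c$ is the unlabelled middle vertex;
\item $c$ carries a subtree $P$, and $v$ carries a subtree $Q$;
\item $u$ carries $t$ pendant vertices $w_1,\dots,w_t$.
\end{itemize}
$G_2$ is obtained by moving $w_1,\dots,w_t$ from $u$ to $v$, so $u$ becomes a leaf at $c$. This matches the lemma's use in Case~1.2, Claim~3, where pendants are moved from $v_2$ to $v_4$ across $v_3$.

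A smaller point about your own configuration: Lemma~\ref{lem4} does not give $x_w\le x_v$. It follows instead from $(\rho+2)(x_w-x_v)=-2\sum_{z\in A_w\setminus\{w\}}x_z$, where $A_w$ is the branch at $u$ containing $w$.

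\textbf{Your Rayleigh-quotient step transfers, but to a different key inequality.} Let $x$ be the Perron vector of $D(G_2)$, let $S_W=\sum_i x_{w_i}$, and let $Q'=Q\setminus\{v\}$, which is nonempty because $\deg_{G_1}(v)\ge 2$. Passing from $G_1$ to $G_2$:
\begin{itemize}
\item $d(w_i,u)$ rises from $1$ to $3$;
\item $d(w_i,z)$ drops by $2$ for every $z\in\{v\}\cup Q'$;
\item all other distances are unchanged.
\end{itemize}
Hence
\[
x^{T}\bigl(D(G_1)-D(G_2)\bigr)x=4S_W\Bigl(x_v+\sum_{z\in Q'}x_z-x_u\Bigr).
\]
So you must show $x_u<x_v+\sum_{z\in Q'}x_z$ in $G_2$. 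Here the leaf $u$ hangs at $c$, on the opposite side of $c$ from $Q'$, and $v$ now carries the $t$ extra leaves.

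\textbf{Your Step~3 mechanism does not survive this change.} Lemma~\ref{lem2} on the edge $uc$ gives $x_u<x_c+S/\rho$. A bound $x_q>S/\rho$ for a neighbour $q\in Q'$ of $v$ would then finish the argument only if $x_v\ge x_c$, and that is false in general. Take $P=\{c\}$, $Q'=\{q\}$ and $t$ large, and let $a$ be the common coordinate of the $t+1$ leaves at $v$. One finds $x_v\approx a/2$, $x_c\approx a$ and $x_u\approx 3a/2$. Thus $x_v+x_q-x_u$ is positive but only of order $a/t$.

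\textbf{What is missing} is a genuinely sharper estimate of $x_v+\sum_{z\in Q'}x_z-x_u$. It would have to come from the eigen-equations of $G_2$ and account for both $P$ and the leaves moved to $v$, in the spirit of the computations in Lemmas~\ref{lem10}--\ref{lem12}.
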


\begin{center}
\begin{tikzpicture}

\draw (0,0.5)node[left]{$w_{1}$} -- (0.5,0)node[above]{$u$} -- (1.5,0) -- (2.5,0)node[above]{$v$}; 
\draw (0,-0.5)node[left]{$w_{t}$} -- (0.5,0);

\filldraw[fill=black,draw=black] (-0.03,0.53) circle [radius=(0.0018)^(0.5)]; 
\filldraw[fill=black,draw=black] (-0.03,-0.53) circle [radius=(0.0018)^(0.5)]; 

\filldraw[fill=black,draw=black](-0.03,0.25)circle [radius=0.01];
\filldraw[fill=black,draw=black](-0.03,0)circle [radius=0.01];
\filldraw[fill=black,draw=black](-0.03,-0.25)circle [radius=0.01];

\filldraw[fill=black,draw=black] (0.5,0) circle [radius=(0.0018)^(0.5)]; 
\filldraw[fill=black,draw=black] (1.5,0) circle [radius=(0.0018)^(0.5)]; 
\filldraw[fill=black,draw=black] (2.5,0) circle [radius=(0.0018)^(0.5)]; 

\draw (1.5,1) ellipse[x radius=0.3,y radius=1];
\draw (3.5,0) ellipse[x radius=1,y radius=0.3];
\node at (1.5,1){$P$};
\node at (3.5,0){$Q$};

\node at (1.5,-0.5){$G_{1}$};

\draw (6.5,0)node[above]{$u$} -- (7.5,0) -- (8.5,0)node[above]{$v$} -- (8,0.7859)node[above]{$w_{1}$}; 
\draw (8.5,0) -- (9,0.7859)node[above]{$w_{t}$};

\filldraw[fill=black,draw=black] (7.97,0.8159) circle [radius=(0.0018)^(0.5)]; 
\filldraw[fill=black,draw=black] (9.03,0.8159) circle [radius=(0.0018)^(0.5)]; 

\filldraw[fill=black,draw=black](8.235,0.8159)circle [radius=0.01];
\filldraw[fill=black,draw=black](8.5,0.8159)circle [radius=0.01];
\filldraw[fill=black,draw=black](8.765,0.8159)circle [radius=0.01];

\filldraw[fill=black,draw=black] (6.5,0) circle [radius=(0.0018)^(0.5)]; 
\filldraw[fill=black,draw=black] (7.5,0) circle [radius=(0.0018)^(0.5)]; 
\filldraw[fill=black,draw=black] (8.5,0) circle [radius=(0.0018)^(0.5)]; 

\draw (7.5,1) ellipse[x radius=0.3,y radius=1];
\draw (9.5,0) ellipse[x radius=1,y radius=0.3];
\node at (7.5,1){$P$};
\node at (9.5,0){$Q$};

\node at (8.5,-0.5){$G_{2}$};

\node at (5.25,-1.5){$Figure\ 4: The\ graphs\ G_{1}\ and\ G_{2}\ in\ Lemma\ 7.$};
\end{tikzpicture}
\end{center}

\begin{theorem}\label{lem8}
\cite{ref22} Let $T_{(n,\varDelta)}$ be the tree with order $n$ and maximum degree $\varDelta$. Let $T\in T_{(n,\varDelta)}$ and $\rho(T)=\rho_{min}(T_{(n,\varDelta)})$. Let $P=v_{1}v_{2}v_{3}\ldots v_{d-1}v_{d}v_{d+1}$ be a diametrical path of $T$. Then $|v_{i(1)}|+|v_{i+1(1)}|\geq \varDelta-2\ (2\leq i \leq d-1))$.
\end{theorem}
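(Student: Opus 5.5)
We argue by contradiction. Suppose $|v_{i(1)}|+|v_{i+1(1)}|\leq \varDelta-3$ for some $2\leq i\leq d-1$ on the diametrical path $P$, and build a tree $T'\in T_{(n,\varDelta)}$ with $\rho(T')<\rho(T)$. Since $\deg(v_i)=|v_{i(1)}|+2$ and $\deg(v_{i+1})=|v_{i+1(1)}|+2$, the assumption gives
\[
\deg(v_i)+\deg(v_{i+1})\leq \varDelta+1 .
\]
This is exactly the degree budget needed to contract the edge $v_iv_{i+1}$. Concretely, delete the edge $v_iv_{i+1}$, identify $v_i$ with $v_{i+1}$ into a single vertex $w$, and reattach the freed vertex as a new pendant vertex at $w$. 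This is the operation of Lemma \ref{lem6} (passing from $G$ to $G^{*}$, with $u=v_i$, $v=v_{i+1}$). Both $v_i$ and $v_{i+1}$ are interior vertices of $P$, so their degrees are at least $2$ and the hypothesis of Lemma \ref{lem6} holds. The lemma then gives $\rho(T)>\rho(T^{*})$.

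It remains to check that $T^{*}\in T_{(n,\varDelta)}$. The order is unchanged. The only degree that changes belongs to the merged vertex, which now has degree
\[
\deg(v_i)+\deg(v_{i+1})-2+1\leq \varDelta ,
\]
so no degree exceeds $\varDelta$. We also need the maximum degree to remain exactly $\varDelta$, i.e.\ we must not destroy the unique vertex of degree $\varDelta$. No vertex other than $v_i,v_{i+1}$ changes degree. If some vertex of degree $\varDelta$ lies outside $\{v_i,v_{i+1}\}$, we are done. Otherwise one of $v_i,v_{i+1}$, say $v_i$, has degree $\varDelta$, and then $\deg(v_{i+1})\leq 1$, which is impossible since $v_{i+1}$ is interior to $P$. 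Hence $T^{*}\in T_{(n,\varDelta)}$ and $\rho(T^{*})<\rho(T)=\rho_{min}$, a contradiction.

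The main obstacle is matching the exact form of the transformation in Lemma \ref{lem6} (Figure 3) to this setting. In particular, we must confirm that the figure's $G^{*}$ really identifies $u$ with $v$ and hangs a pendant vertex at the merged vertex, rather than some other rearrangement. If the lemma's version differs, we would fall back on Lemma \ref{lem7}. In that route we take $u=v_i$ with $P$-side the branch through $v_{i-1}$, and move the pendant-type branches $w_1,\dots,w_t$ (the neighbours in $v_{i(1)}$ together with their subtrees) across to $v_{i+1}$. The inequality $\deg(v_{i+1})+|v_{i(1)}|\leq\varDelta-1$ keeps the maximum degree admissible, and the same degree check as above shows the maximum degree stays $\varDelta$. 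Lemma \ref{lem7} requires the moved branches to be pendant vertices; in the general case we would first argue inductively, or via Lemma \ref{lem2}, that moving whole subtrees still decreases $\rho$.
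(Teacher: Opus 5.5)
Your proof is correct and follows the route the paper itself relies on. Lemma \ref{lem8} is only cited from Wei's thesis, but in Case 2 the paper asserts that Lemma \ref{lem6} turns any non-pre-tree into a pre-tree with smaller $\rho$. That is exactly your contraction of $v_iv_{i+1}$, with merged degree $\deg(v_i)+\deg(v_{i+1})-1\leq\varDelta$ and the vertices of degree $\varDelta$ left untouched, and you supply the degree check the paper leaves as ``obvious''.

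Your worry about Figure 3 is unfounded: $G^{*}$ there is precisely $G$ with $uv$ contracted and $v$ reattached as a pendant vertex at $u$. So the Lemma \ref{lem7} fallback is unnecessary, and it would in any case need extra work, since that lemma only moves pendant vertices.
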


\begin{center}
\begin{tikzpicture}
\draw (0,0)node[below]{$v_{1}$} -- (2,0)node[below]{$v_{2}$};
\draw (4,0)node[below]{$v_{i}$} -- (6,0)node[below]{$v_{i+1}$} -- (8,0)node[below]{$v_{i+2}$};
\draw (10,0)node[below]{$v_{d}$} -- (12,0)node[below]{$v_{d+1}$};

\filldraw[fill=black,draw=black](2.5,0)circle [radius=0.01];
\filldraw[fill=black,draw=black](3,0)circle [radius=0.01];
\filldraw[fill=black,draw=black](3.5,0)circle [radius=0.01];

\filldraw[fill=black,draw=black](8.5,0)circle [radius=0.01];
\filldraw[fill=black,draw=black](9,0)circle [radius=0.01];
\filldraw[fill=black,draw=black](9.5,0)circle [radius=0.01];

\filldraw[fill=black,draw=black] (0,0) circle [radius=(0.0018)^(0.5)]; 
\filldraw[fill=black,draw=black] (2,0) circle [radius=(0.0018)^(0.5)]; 
\filldraw[fill=black,draw=black] (4,0) circle [radius=(0.0018)^(0.5)]; 
\filldraw[fill=black,draw=black] (6,0) circle [radius=(0.0018)^(0.5)]; 
\filldraw[fill=black,draw=black] (8,0) circle [radius=(0.0018)^(0.5)]; 
\filldraw[fill=black,draw=black] (10,0) circle [radius=(0.0018)^(0.5)]; 
\filldraw[fill=black,draw=black] (12,0) circle [radius=(0.0018)^(0.5)]; 

\draw (2,0.5) circle[radius=0.5];
\draw (4,0.5) circle[radius=0.5];
\draw (6,0.5) circle[radius=0.5];
\draw (8,0.5) circle[radius=0.5];
\draw (10,0.5) circle[radius=0.5];
\node at (2,0.5){$G_{v_{2}}$};
\node at (4,0.5){$G_{v_{i}}$};
\node at (6,0.5){$G_{v_{i+1}}$};
\node at (8,0.5){$G_{v_{i+2}}$};
\node at (10,0.5){$G_{v_{d}}$};
%T
\node at (6,-0.6){$T$};

\node at (6,-1.6){$Figure\ 5: The\ graph\ T\ in\ Lemma\ 8.$};
\end{tikzpicture}
\end{center}

\begin{theorem}\label{lem9}
Let $G$ be graph as shown in Figure 6, which satisfies following conditions:
\begin{enumerate}
\item  $H$ is any graph.
\item  There're $p$ pendant vertices adjacent to $u$.
\item  There're $q$ pendant vertices adjacent to $w$.
\item  There's 1 pendant vertex $w_{2}$ adjacent to $v$, and $w_{2} \notin H$.
\item  $p>q\geq1$.
\end{enumerate}
Let $x$ be Perron eigenvector of $G$. Then $x_{u}< x_{w} < x_{w_{2}}$, $x_{u} + \sum\limits_{i=1}^p{x_{a_{i}}}>x_{w} + \sum\limits_{i=1}^q{x_{b_{i}}}$ and $\sum\limits_{i=1}^p{x_{a_{i}}}>\sum\limits_{i=1}^q{x_{b_{i}}}$. 
\end{theorem}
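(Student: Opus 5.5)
The plan is to use only the cut-edge identity of Lemma \ref{lem2}, applied to the pendant edges and to the edges $uv$ and $vw$. I read Figure 6 as follows. $u$ and $w$ are both adjacent to $v$, and $uv$, $vw$ are cut edges. $u$ carries the pendant vertices $a_{1},\dots,a_{p}$, $w$ carries $b_{1},\dots,b_{q}$, $v$ carries the pendant vertex $w_{2}$, and the rest of the graph, $H$, hangs off $v$. Write $S$ for the sum of all coordinates of $x$ and $\rho=\rho(G)$.

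\textbf{Step 1: equal coordinates on twin pendants.} If $a,a'$ are pendant vertices adjacent to the same vertex, the rows of $D(G)$ for $a$ and $a'$ differ only in the columns $a,a'$. Subtracting the two eigen-equations gives $\rho(x_{a}-x_{a'})=2(x_{a'}-x_{a})$, so $x_{a}=x_{a'}$. Put $\alpha=x_{a_{i}}$, $\beta=x_{b_{i}}$, $S_{u}=x_{u}+p\alpha$ and $S_{w}=x_{w}+q\beta$.

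\textbf{Step 2: the identities.} Apply Lemma \ref{lem2} to $ua_{i}$, $wb_{i}$, $uv$, $vw$ and $vw_{2}$:
\begin{align*}
\rho(\alpha-x_{u}) &= S-2\alpha, & \rho(\beta-x_{w}) &= S-2\beta,\\
\rho(x_{u}-x_{v}) &= S-2S_{u}, & \rho(x_{w}-x_{v}) &= S-2S_{w},\\
\rho(x_{w_{2}}-x_{v}) &= S-2x_{w_{2}}. & &
\end{align*}
Subtracting these in pairs gives
\[
x_{u}-x_{w}=\tfrac{2}{\rho}(S_{w}-S_{u}),\qquad (\alpha-\beta)\Bigl(1+\tfrac{2}{\rho}\Bigr)=x_{u}-x_{w}.
\]
In particular $x_{u}-x_{w}$ and $\alpha-\beta$ have the same sign.

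\textbf{Step 3: $x_{u}<x_{w}$ and the two sum inequalities.} Suppose $x_{u}\ge x_{w}$. Then $\alpha\ge\beta$, so
\[
S_{u}-S_{w}=(x_{u}-x_{w})+p\alpha-q\beta\ge(p-q)\beta>0 .
\]
The first identity then forces $x_{u}<x_{w}$, a contradiction. Hence $x_{u}<x_{w}$, and the first identity gives $S_{u}>S_{w}$, which is the claimed $x_{u}+\sum_{i}x_{a_{i}}>x_{w}+\sum_{i}x_{b_{i}}$. The pendant sums follow because
\[
p\alpha-q\beta=(S_{u}-S_{w})+(x_{w}-x_{u})>0 .
\]

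\textbf{Step 4: $x_{w}<x_{w_{2}}$.} Subtract the $vw$ identity from the $vw_{2}$ identity to get
\[
\rho(x_{w_{2}}-x_{w})=2(S_{w}-x_{w_{2}}).
\]
If $x_{w_{2}}\le x_{w}$, then $S_{w}-x_{w_{2}}\ge q\beta>0$, so the right side is positive and $x_{w_{2}}>x_{w}$, a contradiction. Hence $x_{w}<x_{w_{2}}$, and combining with Step 3 gives $x_{u}<x_{w}<x_{w_{2}}$.

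\textbf{Main obstacle.} The algebra is short once the configuration is fixed. The real risk is matching the figure exactly: which vertices the edges join, whether $uv$ and $vw$ are genuinely cut edges, and where $H$ is attached. If $H$ is attached elsewhere, or $u,w$ are not both adjacent to $v$, the components in Lemma \ref{lem2} change. I would then redo Step 2 along the actual $u$–$w$ path using Lemma \ref{lem5} to control the monotonicity. The key sign argument (assume the wrong inequality, compare $S_{u}$ with $S_{w}$ via $p>q$, and contradict) should survive.
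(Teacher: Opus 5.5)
Your proposal is correct and follows essentially the paper's argument. You assume $x_u\ge x_w$, use the pendant relation to get $\sum x_{a_i}>\sum x_{b_i}$, and contradict $\rho(x_u-x_w)=2(S_w-S_u)$; your Lemma~\ref{lem2} identity on $ua_i$, $wb_i$ is exactly Lemma~\ref{lem1} with $k=1$. Your Step 4, via $\rho(x_{w_2}-x_w)=2(S_w-x_{w_2})$, just makes explicit the ``similarly'' the paper uses for $x_w<x_{w_2}$, and your reading of Figure 6 ($H$ attached only at $v$) is the same one the paper's identity implicitly assumes.
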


\begin{center}
\begin{tikzpicture}
\draw (0,1)node[left]{$a_{1}$} -- (1,0)node[above]{$u$} -- (3,0)node[above]{$v$} -- (5,0)node[above]{$w$} -- (6,1)node[right]{$b_{1}$};
\draw (0,-1)node[left]{$a_{p}$} -- (1,0);
\draw (6,-1)node[left]{$b_{q}$} -- (5,0);

\filldraw[fill=black,draw=black](0.5,0.3)circle [radius=0.01];
\filldraw[fill=black,draw=black](0.5,0)circle [radius=0.01];
\filldraw[fill=black,draw=black](0.5,-0.3)circle [radius=0.01];

\filldraw[fill=black,draw=black](5.5,0.3)circle [radius=0.01];
\filldraw[fill=black,draw=black](5.5,0)circle [radius=0.01];
\filldraw[fill=black,draw=black](5.5,-0.3)circle [radius=0.01];

\filldraw[fill=black,draw=black] (0,1) circle [radius=(0.0018)^(0.5)]; 
\filldraw[fill=black,draw=black] (1,0) circle [radius=(0.0018)^(0.5)]; 
\filldraw[fill=black,draw=black] (3,0) circle [radius=(0.0018)^(0.5)]; 
\filldraw[fill=black,draw=black] (5,0) circle [radius=(0.0018)^(0.5)]; 
\filldraw[fill=black,draw=black] (6,1) circle [radius=(0.0018)^(0.5)]; 
\filldraw[fill=black,draw=black] (0,-1) circle [radius=(0.0018)^(0.5)]; 
\filldraw[fill=black,draw=black] (6,-1) circle [radius=(0.0018)^(0.5)]; 

\draw (3,-0.5) ellipse[x radius=0.3,y radius=0.5];
\node at (3,-0.5){$H$};

\draw (4,1)node[left]{$w_{2}$} -- (3,0);
\filldraw[fill=black,draw=black] (4,1) circle [radius=(0.0018)^(0.5)];

\node at (3,-2){$G$};

\node at (3,-3){$Figure\ 6: The\ graph\ G\ in\ Lemma\ 9.$};
\end{tikzpicture}
\end{center}

\begin{proof}
Suppose on the contrary that $x_{u}\geq x_{w}$, then since $p>q$, by Lemma \ref{lem1} we can get following result:

\begin{equation}\label{9.1}
\sum\limits_{i=1}^p{x_{a_{i}}}>\sum\limits_{i=1}^q{x_{b_{i}}}.
\end{equation}

Thus from inequality (\ref{9.1}), we get
\begin{equation}\label{9.2}
0\leq \rho(G)\cdot(x_{u}-x_{w})=2[x_{w}+\sum\limits_{i=1}^q{x_{b_{i}}}-x_{u}-\sum\limits_{i=1}^p{x_{a_{i}}}]<0,
\end{equation}
which is a contradiction. So we have
\begin{equation}\label{9.3}
x_{u}< x_{w}.
\end{equation}
\indent By (\ref{9.3}), we have
\begin{equation}\label{9.3.1}
x_{u} + \sum\limits_{i=1}^p{x_{a_{i}}}>x_{w} + \sum\limits_{i=1}^q{x_{b_{i}}}.
\end{equation}
\indent Combining (\ref{9.3}) and (\ref{9.3.1}), we can easily get the conclusion that
\begin{equation}\label{9.4}
\sum\limits_{i=1}^p{x_{a_{i}}}>\sum\limits_{i=1}^q{x_{b_{i}}}.
\end{equation}
\indent Similar to above, we can easily show that $x_{u}< x_{w} < x_{w_{2}}$. It's clear that inequalities (\ref{9.3}), (\ref{9.3.1}) and (\ref{9.4}) still hold in $G^{*}=G- \{w_{2}\}$.
\end{proof}

\begin{theorem}\label{lem10}
Let $G$ be graph as shown in Figure 7, which satisfies following conditions: 
\begin{enumerate}
\item  $H$ is any graph.
\item  There're $p$ pendant vertices adjacent to $u$.
\item  There're $q$ pendant vertices adjacent to $w$.
\item  $p\geq q\geq 1$.
\end{enumerate}
Let $G^{*}=G - wb_{q}+ ub_{q}$. Then $\rho(G)>\rho(G^{*})$.
\end{theorem}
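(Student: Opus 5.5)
We compare the Rayleigh quotients at the Perron vector. Let $x$ be the unit Perron eigenvector of $G$, with the labelling of Figure 7: $u$ and $w$ are joined through a central vertex $v$ (as in Figure 6), $u$ carries pendant vertices $a_1,\dots,a_p$, $w$ carries pendant vertices $b_1,\dots,b_q$, and $H$ hangs at $v$. We have
\[
\rho(G^{*}) - \rho(G) \le x^{T}D(G^{*})x - x^{T}D(G)x .
\]
Moving the pendant vertex $b_q$ from $w$ to $u$ changes only the distances from $b_q$ to the other vertices. Since $dist(u,w)=2$, each distance $d(b_q,y)$ changes as follows:
\begin{itemize}
\item it decreases by $2$ for $y\in\{u,a_1,\dots,a_p\}$;
\item it increases by $2$ for $y\in\{w,b_1,\dots,b_{q-1}\}$;
\item it is unchanged for $y$ equal to $v$ or in $H$.
\end{itemize}
Hence
\[
x^{T}D(G^{*})x - x^{T}D(G)x = 4x_{b_q}\Big[\big(x_w+\sum_{i=1}^{q-1}x_{b_i}\big) - \big(x_u+\sum_{i=1}^{p}x_{a_i}\big)\Big],
\]
and it suffices to prove
\[
x_u+\sum_{i=1}^{p}x_{a_i} \;\ge\; x_w+\sum_{i=1}^{q-1}x_{b_i}.
\]
The inequality is then strict: the right-hand side loses the positive term $x_{b_q}$ compared with $x_w+\sum_{i=1}^{q}x_{b_i}$. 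This gives $\rho(G^{*})<\rho(G)$.

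To prove that inequality, we apply Lemma \ref{lem2} to the cut edges $uv$ and $vw$. Write $A=x_u+\sum_i x_{a_i}$, $B=x_w+\sum_i x_{b_i}$, and let $S$ be the total coordinate sum. Lemma \ref{lem2} gives
\[
\rho(x_v-x_u)=A-(S-A), \qquad \rho(x_v-x_w)=B-(S-B),
\]
so $\rho(x_u-x_w)=2(B-A)$. Each pendant $a_i$ satisfies $\rho(x_{a_i}-x_u)=(S-x_{a_i})-x_{a_i}$ by Lemma \ref{lem2} applied to the edge $a_iu$. It follows that all $x_{a_i}$ are equal to some common value $\alpha$, all $x_{b_j}$ are equal to some common value $\beta$, and
\[
(\rho+2)\alpha=S+\rho x_u, \qquad (\rho+2)\beta=S+\rho x_w .
\]
Thus $A=x_u+p\alpha$, $B=x_w+q\beta$ and $\alpha-\beta=\rho(x_u-x_w)/(\rho+2)$.

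\textbf{Case $p>q$.} Lemma \ref{lem9}, applied to $G$ (the remark at the end of its proof covers the absence of $w_2$), gives $A>B$ directly. Hence $A>B>x_w+\sum_{i=1}^{q-1}x_{b_i}$, and we are done.

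\textbf{Case $p=q$.} This is the main case. Suppose $x_u\ne x_w$, say $x_u>x_w$. Then $\alpha>\beta$, so $A=x_u+p\alpha>x_w+p\beta=B$. But the cut-edge identity gives $\rho(x_u-x_w)=2(B-A)<0$, contradicting $x_u>x_w$; the case $x_u<x_w$ is symmetric. So $x_u=x_w$, hence $\alpha=\beta$ and $A=B$. The required inequality $A\ge B-x_{b_q}$ then holds strictly.

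The main obstacle is only bookkeeping. One must get the distance changes right: $d(b_q,a_i)$ goes from $4$ to $2$, and $d(b_q,b_i)$ goes from $2$ to $4$. One must also check that no vertex of $H$ is affected, which holds because every path from $H$ to $u$ or $w$ passes through $v$, and $u,w$ are both at distance $1$ from $v$. If Figure 7 instead joins $u$ and $w$ by a longer path through $H$, the same argument applies with $2$ replaced by $dist(u,w)$ and the pendants regrouped accordingly. The symmetric comparison of $x_u$ and $x_w$ via Lemma \ref{lem2} works unchanged in that setting.
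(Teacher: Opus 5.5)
Your opening inequality is reversed, and this breaks the whole argument. If $x$ is the unit Perron vector of $G$, then $x^{T}D(G)x=\rho(G)$ but only $x^{T}D(G^{*})x\le\rho(G^{*})$. Hence $x^{T}D(G^{*})x-x^{T}D(G)x\le\rho(G^{*})-\rho(G)$, not $\ge$. Showing the left side is negative gives only $x^{T}D(G^{*})x<\rho(G)$, which says nothing about $\rho(G^{*})$: a Rayleigh quotient at a fixed vector bounds the other graph's spectral radius only from below. To bound $\rho(G^{*})$ from above you must test at the Perron vector of $G^{*}$, which is what the paper does. With $x$ the Perron vector of $G^{*}$, $\rho(G)-\rho(G^{*})\ge x^{T}[D(G)-D(G^{*})]x=4x_{b_q}\bigl[x_u+\sum_{i=1}^{p}x_{a_i}-x_w-\sum_{i=1}^{q-1}x_{b_i}\bigr]$. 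Your distance bookkeeping is correct and carries over unchanged.

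The rest does not carry over. In $G^{*}$ the vertex $b_q$ is a pendant of $u$, so $u$ has $p+1$ pendants and $w$ has $q-1$. Let $A^{*}$ and $B^{*}$ be the branch sums at $u$ and $w$ in $G^{*}$. Then the bracket is $A^{*}-x_{b_q}-B^{*}$, not a branch comparison. Knowing $A^{*}>B^{*}$ (which your cut-edge argument gives, since $p+1>q-1$) is not enough; you must show the margin $A^{*}-B^{*}$ exceeds $x_{b_q}$. There is no symmetry to use when $p=q$, and this quantitative step is the real content of the lemma. The paper obtains it by expanding $\rho(G^{*})$ times the bracket row by row and proving $(\rho(G^{*})+2q)\bigl[\cdots\bigr]>0$.

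Your own identities can also close the gap. In $G^{*}$, set $\delta=x_w-x_u$ and let $\alpha$ be the common value on the pendants of $u$. Then $A^{*}-B^{*}=\rho\delta/2$ and $(p-q+2)\alpha=\delta\bigl(\rho/2+1+(q-1)\rho/(\rho+2)\bigr)$, which forces $\delta>0$. The bracket equals $\rho\delta/2-\alpha$, which is positive if and only if $(p-q+1)\rho/2>1+(q-1)\rho/(\rho+2)$. This holds because every row sum of $D(G^{*})$ is at least $n-1\ge p+q+2\ge 2q+2$, so $\rho=\rho(G^{*})\ge 2q+2$.
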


\begin{center}
\begin{tikzpicture}
\draw (0,0.5)node[left]{$a_{1}$} -- (0.5,0)node[above]{$u$} -- (1.5,0)node[above]{$v$} -- (2.5,0)node[above]{$w$} -- (3,0.5)node[right]{$b_{1}$};
\draw (0,-0.5)node[left]{$a_{p}$} -- (0.5,0);
\draw (2.5,0) -- (3,-0.5)node[right]{$b_{q}$};

\draw[thick] (-0.01,0.51) circle [radius=(0.001)^(0.5)]; 

\filldraw[fill=black,draw=black](0.25,0.1)circle [radius=0.01];
\filldraw[fill=black,draw=black](0.25,0)circle [radius=0.01];
\filldraw[fill=black,draw=black](0.25,-0.1)circle [radius=0.01];

\draw[thick] (-0.01,-0.51) circle [radius=(0.001)^(0.5)];

\draw[thick] (3.01,0.51) circle [radius=(0.001)^(0.5)];
 
\filldraw[fill=black,draw=black](2.75,0.1)circle [radius=0.01];
\filldraw[fill=black,draw=black](2.75,0)circle [radius=0.01];
\filldraw[fill=black,draw=black](2.75,-0.1)circle [radius=0.01];

\draw[thick] (3.01,-0.51) circle [radius=(0.001)^(0.5)];

\draw[thick] (0.51,-0.02) circle [radius=(0.001)^(0.5)]; 
\draw[thick] (1.5,-0.02) circle [radius=(0.001)^(0.5)]; 
\draw (1.5,-0.56) circle[radius=0.5];
\node at (1.5,-0.56){$H$};
\draw[thick] (2.49,-0.02) circle [radius=(0.001)^(0.5)];

\node at (1.5,-1.5){$G$};

\draw (7,0.5)node[left]{$a_{1}$} -- (7.5,0)node[above]{$u$} -- (8.5,0)node[above]{$v$} -- (9.5,0)node[above]{$w$} -- (10,0.5)node[right]{$b_{1}$};
\draw (7,-0.5)node[left]{$a_{p}$} -- (7.5,0);
\draw (7.2,-0.5)node[below]{$b_{q}$} -- (7.5,0);
\draw (9.5,0) -- (10,-0.5)node[right]{$b_{q-1}$};

\draw[thick] (6.99,0.51) circle [radius=(0.001)^(0.5)]; 

\filldraw[fill=black,draw=black](7.25,0.1)circle [radius=0.01];
\filldraw[fill=black,draw=black](7.25,0)circle [radius=0.01];
\filldraw[fill=black,draw=black](7.25,-0.1)circle [radius=0.01];

\draw[thick] (6.99,-0.51) circle [radius=(0.001)^(0.5)]; 
\draw[thick] (7.19,-0.51) circle [radius=(0.001)^(0.5)]; 

\draw[thick] (10.01,0.51) circle [radius=(0.001)^(0.5)];
 
\filldraw[fill=black,draw=black](9.75,0.1)circle [radius=0.01];
\filldraw[fill=black,draw=black](9.75,0)circle [radius=0.01];
\filldraw[fill=black,draw=black](9.75,-0.1)circle [radius=0.01];

\draw[thick] (10.01,-0.51) circle [radius=(0.001)^(0.5)];

\draw[thick] (7.51,-0.02) circle [radius=(0.001)^(0.5)]; 
\draw[thick] (8.5,-0.02) circle [radius=(0.001)^(0.5)]; 
\draw (8.5,-0.56) circle[radius=0.5];
\node at (8.5,-0.56){$H$};
\draw[thick] (9.49,-0.02) circle [radius=(0.001)^(0.5)];

\node at (8.5,-1.5){$G^{*}$};

\node at (5,-2.5){$Figure\ 7: The\ graphs\ G\ and\ G^{*}\ in\ Lemma\ 10.$};
\end{tikzpicture}
\end{center}
\begin{proof}
Let $x$ be Perron eigenvector of $G^{*}$. We can see from Lemma \ref{lem9}, $x_{u}<x_{w}$ and $x_{b_{q}}<x_{b_{1}}$, using which we can determine the sign of $x_{u}+\sum\limits_{i=1}^p{x_{a_{i}}}- x_{w}-\sum\limits_{i=1}^{q-1}{x_{b_{i}}}$.

\begin{eqnarray}
& &\rho(G^{*})[x_{u}+\sum\limits_{i=1}^p{x_{a_{i}}}-x_{w}-\sum\limits_{i=1}^{q-1}{x_{b_{i}}}] \nonumber \\
&>&[0+p-2-3(q-1)] x_{u}+[1+0+2(p-1)-3-4(q-1)]p{x_{a_{1}}} +[1+2p-3-4(q-1)]{x_{b_{q}}}\nonumber \\
& &+[2+3p-0-(q-1)]x_{w} +[3+4p-1-0-2(q-2)](q-1){x_{b_{1}}}\nonumber \\
&=&(p-3q+1)x_{u}+(2p-4q)p{x_{a_{1}}}+ (2p-4q+2){x_{b_{q}}}+(3p-q+3)x_{w}+(4p-2q+6)(q - 1){x_{b_{1}}}\nonumber \\
\Rightarrow& &[\rho(G^{*})+2q][x_{u}+\sum\limits_{i=1}^p{x_{a_{i}}}-x_{w}-\sum\limits_{i=1}^{q-1}{x_{b_{i}}}]\nonumber \\
&>&(p-q+1)x_{u}+2(p-q)p{x_{a_{1}}}+ 2(p-2q+1){x_{b_{q}}}+3(p-q+1)x_{w} +2(2p-2q+3)(q - 1){x_{b_{1}}} \nonumber \\
&>&(p-q+1)x_{u}+2(p-q)p{x_{a_{1}}}+ 2(p+q-2){x_{b_{q}}}+3(p-q+1)x_{w} +4(p-q)(q - 1){x_{b_{1}}} \nonumber \\
&>&0\nonumber \\
\Rightarrow& &x_{u}+\sum\limits_{i=1}^p{x_{a_{i}}}-x_{w}-\sum\limits_{i=1}^{q-1}{x_{b_{i}}}>0.\label{10.1}
\end{eqnarray}
\indent By (\ref{10.1}), we can determine the sign of $\rho(G)-\rho(G^{*})$.
\begin{eqnarray*}
\frac{1}{2}[\rho(G)-\rho(G^{*})]\geq \frac{1}{2}x^{t}[D(G)-D(G^{*})]x =2{x_{b_{q}}}[x_{u}+\sum\limits_{i=1}^p{x_{a_{i}}}-x_{w}-\sum\limits_{i=1}^{q-1}{x_{b_{i}}}] >0\Rightarrow \rho(G)>\rho(G^{*}).
\end{eqnarray*}
\end{proof}

\begin{theorem}\label{lem11}
Let $G$ be graph as shown in Figure 8, which satisfies following conditions: 
\begin{enumerate}
\item  There're $p$ pendant vertices adjacent to $u$.
\item  There're $q$ pendant vertices adjacent to $w$.
\item  There're $r$ pendant vertices adjacent to $v_{1}$.
\item  There're $r$ pendant vertices adjacent to $v_{2}$.
\item  $p\geq q\geq 1$.
\item  $r\geq p+q-1$.
\end{enumerate}
Let $G^{*}=G-\sum\limits_{i=1}^{q}wb_{i}+ \sum\limits_{i=1}^{q}ub_{i}$. Then $\rho(G)>\rho(G^{*})$.
\end{theorem}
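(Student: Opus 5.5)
We follow the scheme of Lemma \ref{lem10}: take $x$ to be the Perron eigenvector of $G^{*}$ and compare $x^{T}D(G)x$ with $x^{T}D(G^{*})x=\rho(G^{*})$. Since we do not have Figure 8, we assume the structure is the path $u v_{1} v_{2} w$. Here $u$ carries the pendant vertices $a_{1},\dots,a_{p}$, $w$ carries $b_{1},\dots,b_{q}$, and $v_{1}$ and $v_{2}$ each carry $r$ pendant vertices. In passing from $G$ to $G^{*}$, all $q$ pendant vertices $b_{i}$ move from $w$ to $u$. For each $b_{i}$, the distances to $w$ and to $v_{2}$'s pendants grow, the distances to $u$ and $a_{j}$ shrink, and the distances among the $b_{i}$ themselves are unchanged. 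The Rayleigh quotient then gives
\[
\tfrac12[\rho(G)-\rho(G^{*})]\ \ge\ \tfrac12x^{T}[D(G)-D(G^{*})]x
= c\,\Big(\sum_{i=1}^{q}x_{b_{i}}\Big)\,\Phi ,
\]
where $c>0$ is a constant equal to the change in distance ($3$ along a path of length $3$, with the middle vertices weighted $\pm1$). Here $\Phi$ is a linear combination: the terms $x_{u},x_{a_{j}}$ and the pendants of $v_{1}$ enter positively, and the terms $x_{w}$ and the pendants of $v_{2}$ enter negatively. The whole task is to prove $\Phi>0$ in $G^{*}$.

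In $G^{*}$, $u$ has $p+q$ pendant neighbours and $w$ has none, so $w$ is a pendant vertex of $G^{*}$ hanging on $v_{2}$. We would first get the sign information:
\begin{enumerate}[label=(\roman*)]
\item By Lemma \ref{lem4}, all pendant coordinates exceed those of their supports.
\item By Lemma \ref{lem2} applied to the cut edge $v_{1}v_{2}$, we get $\rho(x_{v_{1}}-x_{v_{2}})=S_{2}-S_{1}$. The side of $v_{1}$ contains $u$, the $p+q$ $a$/$b$ leaves and $r$ leaves, while the side of $v_{2}$ contains only $r+1$ leaves. Since $r\ge p+q-1$, we argue as in Lemma \ref{lem9}, assuming the contrary and reaching a contradiction, to obtain $x_{v_{1}}<x_{v_{2}}$.
\item Lemma \ref{lem5} then propagates monotonicity along the path $u v_{1}v_{2}w$, giving $x_{u}<x_{v_{1}}<x_{v_{2}}<x_{w}$.
\item Lemma \ref{lem2} applied to the pendant edges expresses each leaf coordinate as its support's coordinate plus a positive multiple of the difference of component sums. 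In particular, the leaves of $v_{1}$ have equal coordinates, as do those of $v_{2}$, and their difference is controlled by $x_{v_{1}}-x_{v_{2}}$.
\end{enumerate}

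Next we bound $\rho(G^{*})\Phi$ from below, exactly as in the displayed chain of Lemma \ref{lem10}. Write $\rho(G^{*})x_{z}=\sum_{y}d_{zy}x_{y}$ for each vertex $z$ appearing in $\Phi$ and subtract. Group the resulting coefficients by vertex class: $u$, the $a$/$b$ leaves, $v_{1}$, the $v_{1}$-leaves, $v_{2}$, the $v_{2}$-leaves, and $w$. Moving a suitable multiple of $\Phi$ to the left gives $(\rho(G^{*})+\kappa)\Phi$, and we then check that every grouped coefficient is nonnegative and at least one is positive. Where a coefficient depends on $r-(p+q-1)$, $p-q$ or similar quantities, we use the hypotheses $p\ge q\ge1$ and $r\ge p+q-1$. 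Where a coefficient multiplies a small coordinate such as $x_{u}$ and comes out negative, we compensate by trading it against a larger coordinate such as $x_{w}$ or a leaf, using the inequalities from (iii), just as Lemma \ref{lem10} replaced $x_{b_q}$ terms.

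The main obstacle is this last bookkeeping step. The $2r$ leaves at $v_{1},v_{2}$ contribute coefficients of order $r$ with opposite signs. For the negative contributions to be dominated, the hypothesis $r\ge p+q-1$ must be used precisely, together with the comparison between the coordinates of the $v_{1}$-leaves and the $v_{2}$-leaves. If a direct coefficient check fails, we would first try pairing each $v_{2}$-leaf with a $v_{1}$-leaf and using Lemma \ref{lem2} to bound their difference by $(x_{v_{2}}-x_{v_{1}})$ plus a small term. Once $\Phi>0$ is established, the displayed inequality yields $\rho(G)>\rho(G^{*})$.
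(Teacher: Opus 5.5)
Your reduction is the paper's: with $x$ the Perron vector of $G^{*}$, everything comes down to
\[
\Phi=3\sum_{i=1}^{p}x_{a_i}+3x_u+x_{v_1}+\sum_{i=1}^{r}x_{c_i}-x_{v_2}-\sum_{i=1}^{r}x_{d_i}-3x_w>0 .
\]
The gap is in the sign information you plan to use for this step, because item (iii) is false. Lemma~\ref{lem5} only propagates an increase towards a pendant vertex. So $x_{v_1}<x_{v_2}$ gives $x_{v_2}<x_w$, and says nothing about $x_u$ versus $x_{v_1}$; on the path $uv_1v_2w$ the relation $x_u<x_{v_1}$ would be the hypothesis, not the conclusion. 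The opposite actually holds. In $G^{*}$ the vertex $v_2$ carries the $r+1$ leaves $d_1,\dots,d_r,w$, while $u$ carries $p+q$ leaves, and
\[
\rho(G^{*})(x_u-x_{v_2})=2\bigl[x_{v_2}+(r+1)x_{d_1}-x_u-(p+q)x_{a_1}\bigr].
\]
The contradiction argument of Lemma~\ref{lem9}, using $r+1\ge p+q$, gives $x_{v_2}\le x_u$, hence $x_{a_1}\ge x_{d_1}=x_w$. Together with your (ii), the true ordering is $x_{v_1}<x_{v_2}\le x_u$, so the centre $v_1$ has the smallest coordinate. For a concrete check take $p=q=r=1$: swapping $u$ and $v_2$ is an automorphism of $G^{*}$, so $x_u=x_{v_2}$, which your chain forbids. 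This comparison is where $r\ge p+q-1$ is really needed; $x_{v_1}<x_{v_2}$ holds without it. Any trading of negative coefficients that relies on $x_u<x_{v_1}$ or $x_u<x_{v_2}$ is therefore invalid. Since the positivity of $\Phi$, the only substantive step, is left as an unexecuted coefficient check, the proposal does not yet prove the lemma. The paper closes it with exactly the corrected ordering, an auxiliary eigen-equation bound $2x_w>x_{a_1}$, and an explicit expansion of $(\rho(G^{*})+r)\Phi$.

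Once the ordering is fixed, your fallback pairing idea does go through. Put $\rho=\rho(G^{*})$, let $S$ be the sum of all coordinates of $x$, and set $\theta=r/(\rho+2+r)\in(0,1)$. Lemma~\ref{lem2} on a pendant edge $s\ell$ gives $(\rho+2)x_\ell=\rho x_s+S$, so $x_{d_1}-x_{c_1}=\frac{\rho}{\rho+2}(x_{v_2}-x_{v_1})$. Combining this with Lemma~\ref{lem2} on $v_1v_2$ yields $r(x_{d_1}-x_{c_1})=\theta\,[x_{v_1}-x_{v_2}+x_u+(p+q)x_{a_1}-x_{d_1}]$. Substituting into $\Phi$, and using $p\ge q$, $x_{a_1}\ge x_{d_1}$ and $x_{v_2}\le x_u$,
\[
\Phi=(3p-\theta(p+q))x_{a_1}+(3-\theta)x_u+(1-\theta)(x_{v_1}-x_{v_2})-(3-\theta)x_{d_1}\ge 2x_u+(1-\theta)x_{v_1}-\theta x_{d_1}.
\]
Finally, $\rho x_u\ge S-x_u$ gives $(\rho+2)x_{d_1}\le(2\rho+1)x_u$, so $x_{d_1}<2x_u$. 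Hence $\Phi>(1-\theta)(2x_u+x_{v_1})>0$.
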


\begin{center}
\begin{tikzpicture}
\draw (0,0.5)node[left]{$a_{1}$} -- (0.5,0)node[below]{$u$} -- (1.5,0)node[below]{$v_{1}$} -- (2.5,0)node[below]{$v_{2}$} -- (3.5,0)node[below]{$w$}-- (4,0.5)node[right]{$b_{1}$};

\draw (0,-0.5)node[left]{$a_{p}$} -- (0.5,0);

\draw (3.5,0) -- (4,-0.5)node[right]{$b_{q}$};

\draw (1.5,0) -- (1.25,0.5)node[above]{$c_{1}$};

\draw (1.5,0) -- (1.75,0.5)node[above]{$c_{r}$};

\draw[thick] (1.249,0.51) circle [radius=(0.001)^(0.5)]; 
\draw[thick] (1.751,0.51) circle [radius=(0.001)^(0.5)]; 

\filldraw[fill=black,draw=black](1.4,0.25)circle [radius=0.01];
\filldraw[fill=black,draw=black](1.5,0.25)circle [radius=0.01];
\filldraw[fill=black,draw=black](1.6,0.25)circle [radius=0.01];

\draw (2.5,0) -- (2.25,0.5)node[above]{$d_{1}$};

\draw (2.5,0) -- (2.75,0.5)node[above]{$d_{r}$};

\draw[thick] (2.249,0.51) circle [radius=(0.001)^(0.5)]; 
\draw[thick] (2.751,0.51) circle [radius=(0.001)^(0.5)]; 

\filldraw[fill=black,draw=black](2.4,0.25)circle [radius=0.01];
\filldraw[fill=black,draw=black](2.5,0.25)circle [radius=0.01];
\filldraw[fill=black,draw=black](2.6,0.25)circle [radius=0.01];

\draw[thick] (-0.01,0.51) circle [radius=(0.001)^(0.5)]; 
\draw[thick] (-0.01,-0.51) circle [radius=(0.001)^(0.5)]; 

\filldraw[fill=black,draw=black](0.25,0.1)circle [radius=0.01];
\filldraw[fill=black,draw=black](0.25,0)circle [radius=0.01];
\filldraw[fill=black,draw=black](0.25,-0.1)circle [radius=0.01];

\draw[thick] (4.01,0.51) circle [radius=(0.001)^(0.5)];
\draw[thick] (4.01,-0.51) circle [radius=(0.001)^(0.5)]; 

\filldraw[fill=black,draw=black](3.75,0.1)circle [radius=0.01];
\filldraw[fill=black,draw=black](3.75,0)circle [radius=0.01];
\filldraw[fill=black,draw=black](3.75,-0.1)circle [radius=0.01];

\draw[thick] (0.51,-0.02) circle [radius=(0.001)^(0.5)]; 
\draw[thick] (1.5,-0.02) circle [radius=(0.001)^(0.5)]; 
\draw[thick] (2.5,-0.02) circle [radius=(0.001)^(0.5)]; 
\draw[thick] (3.49,-0.02) circle [radius=(0.001)^(0.5)];

\node at (2,-0.5){$G$};

\draw (7,0.5)node[left]{$a_{1}$} -- (7.5,0)node[below]{$u$} -- (8.5,0)node[below]{$v_{1}$} -- (9.5,0)node[below]{$v_{2}$} -- (10.5,0)node[below]{$w$};

\draw (7,0.1)node[left]{$a_{p}$} -- (7.5,0);

\draw (7,-0.1)node[left]{$b_{1}$} -- (7.5,0);

\draw (7,-0.5)node[left]{$b_{q}$} -- (7.5,0);

\draw (8.5,0) -- (8.25,0.5)node[above]{$c_{1}$};

\draw (8.5,0) -- (8.75,0.5)node[above]{$c_{r}$};

\draw[thick] (8.249,0.51) circle [radius=(0.001)^(0.5)]; 
\draw[thick] (8.751,0.51) circle [radius=(0.001)^(0.5)]; 

\filldraw[fill=black,draw=black](8.4,0.25)circle [radius=0.01];
\filldraw[fill=black,draw=black](8.5,0.25)circle [radius=0.01];
\filldraw[fill=black,draw=black](8.6,0.25)circle [radius=0.01];

\draw (9.5,0) -- (9.25,0.5)node[above]{$d_{1}$};

\draw (9.5,0) -- (9.75,0.5)node[above]{$d_{r}$};

\draw[thick] (9.249,0.51) circle [radius=(0.001)^(0.5)]; 
\draw[thick] (9.751,0.51) circle [radius=(0.001)^(0.5)]; 

\filldraw[fill=black,draw=black](9.4,0.25)circle [radius=0.01];
\filldraw[fill=black,draw=black](9.5,0.25)circle [radius=0.01];
\filldraw[fill=black,draw=black](9.6,0.25)circle [radius=0.01];

\draw[thick] (6.99,0.51) circle [radius=(0.001)^(0.5)]; 
\draw[thick] (6.99,0.11) circle [radius=(0.001)^(0.5)]; 

\filldraw[fill=black,draw=black](7.05,0.35)circle [radius=0.01];
\filldraw[fill=black,draw=black](7.05,0.25)circle [radius=0.01];
\filldraw[fill=black,draw=black](7.05,0.15)circle [radius=0.01];

\draw[thick] (6.99,-0.11) circle [radius=(0.001)^(0.5)]; 
\draw[thick] (6.99,-0.51) circle [radius=(0.001)^(0.5)]; 

\filldraw[fill=black,draw=black](7.05,-0.35)circle [radius=0.01];
\filldraw[fill=black,draw=black](7.05,-0.25)circle [radius=0.01];
\filldraw[fill=black,draw=black](7.05,-0.15)circle [radius=0.01];

\draw[thick] (7.51,-0.02) circle [radius=(0.001)^(0.5)]; 
\draw[thick] (8.5,-0.02) circle [radius=(0.001)^(0.5)]; 
\draw[thick] (9.5,-0.02) circle [radius=(0.001)^(0.5)]; 
\draw[thick] (10.49,-0.02) circle [radius=(0.001)^(0.5)];

\node at (9,-0.5){$G^{*}$};

\node at (5.25,-1.5){$Figure\ 8: The\ graphs\ G\ and\ G^{*}\ in\ Lemma\ 11.$};
\end{tikzpicture}
\end{center}

\begin{proof}
Let $x$ be Perron eigenvector of $G^{*}$. We know that $x_{a_{1}}=\cdots=x_{a_{p}}=x_{b_{1}}=\cdots=x_{b_{q}}, x_{c_{1}}=\cdots=x_{c_{r}}, x_{d_{1}}=\cdots=x_{d_{r}}=x_{w}$. From Lemma \ref{lem9}, we know $x_{u}\geq x_{v_{2}}$, so $x_{a_{1}}\geq x_{d_{1}}> x_{v_{2}}$, using which we can get the following:
\begin{eqnarray*}
\rho(G^{*})(2x_{d_{1}}-x_{a_{1}}) > 8x_{a_{1}}-4x_{d_{1}}-x_{v_{2}}\Rightarrow[\rho(G^{*})+2]\cdot(2x_{d_{1}}-x_{a_{1}}) > 6x_{a_{1}}-x_{v_{2}}>0,
\end{eqnarray*}
thus $$2x_{w}=2x_{d_{1}}>x_{a_{1}}=x_{b_{1}}>x_{u}.$$
\indent Next, we can determine the sign of $3\sum\limits_{i=1}^p{x_{a_{i}}}+3{x_{u}}+x_{v_{1}}+\sum\limits_{i=1}^r{x_{c_{i}}}-{x_{v_{2}}}-\sum\limits_{i=1}^r{x_{d_{i}}}-3x_{w}$.

\begin{eqnarray*}
& &\rho(G^{*})[3\sum\limits_{i=1}^p{x_{a_{i}}}+3x_{u}+x_{v_{1}}+\sum\limits_{i=1}^r{x_{c_{i}}}-{x_{v_{2}}}-\sum\limits_{i=1}^r{x_{d_{i}}}-3x_{w}]\nonumber \\
&=& (6p-r-16) p x_{a_{1}}+(6p-r-10) q x_{b_{1}}+(3p-r-10) x_{u} +(9p-r-6) r x_{c_{1}} \nonumber \\
& &+(6p-r-4) x_{v_{1}}+(12p+r+6) r x_{d_{1}}+(9p+r+4) x_{v_{2}}+(12p+r+10) x_{w} \nonumber \\
\Rightarrow& & [\rho(G^{*})+r]\cdot[3\sum\limits_{i=1}^p{x_{a_{i}}}+3x_{u}+x_{v_{1}}+\sum\limits_{i=1}^r{x_{c_{i}}}-x_{v_{2}}-\sum\limits_{i=1}^r{x_{d_{i}}}-3x_{w}] \nonumber \\
&=& (6p+2r-16)px_{a_{1}}+(6p-r-10)qx_{b_{1}}+(3p+2r-10)x_{u}+(9p-6)rx_{c_{1}}  \nonumber \\
& &+(6p-4)x_{v_{1}}+(12p+6)rx_{d_{1}}+(9p+4)x_{v_{2}}+(12p-2r+10)x_{w}  \nonumber \\
&=&(6p+r-7)px_{a_{1}}+(6p-6)qx_{b_{1}}+(3p+2r-5)x_{u}+(6p-6)rx_{c_{1}} \nonumber \\
& &+(4p-4)x_{v_{1}}+(12p+4)rx_{d_{1}}+(9p+4)x_{v_{2}}+(12p+10)x_{w} \nonumber \\
& &+[(rpx_{a_{1}}-rqx_{b_{1}})-9px_{a_{1}}-4qx_{b_{1}}-5x_{u}+3prx_{c_{1}}+2px_{v_{1}}+(2rx_{d_{1}}-2rx_{w})] \nonumber \\
&\geq&(12p+4)rx_{d_{1}}+(9p+4)x_{v_{2}}+(12p+10)x_{w}-9px_{a_{1}}-4qx_{b_{1}}-5x_{u}+3prx_{c_{1}}+2px_{v_{1}} \nonumber \\
&=&(12prx_{d_{1}}+6px_{w}-9px_{a_{1}})+(4rx_{d_{1}}+4px_{w}-4qx_{b_{1}}) \nonumber \\
& &+(10x_{w}-5x_{u})+(9p+4)x_{v_{2}}+2px_{w}+3prx_{c_{1}}+2px_{v_{1}} > 0  \nonumber \\
\Rightarrow& &3\sum\limits_{i=1}^p{x_{a_{i}}}+3x_{u}+x_{v_{1}}+\sum\limits_{i=1}^r{x_{c_{i}}}-x_{v_{2}}-\sum\limits_{i=1}^r{x_{d_{i}}}-3x_{w}>0.
\end{eqnarray*}
\indent Using the above results, we can easily determine the sign of $\rho(G)-\rho(G^{*}).$
\begin{eqnarray*} 
\frac{1}{2}[\rho(G)-\rho(G^{*})]&\geq&\frac{1}{2}x^{t}[D(G)-D(G^{*})]x \nonumber \\
&=&(\sum\limits_{i=1}^q{x_{b_{i}}})\cdot [3\sum\limits_{i=1}^p{x_{a_{i}}}+3{x_{u}}+x_{v_{1}}+\sum\limits_{i=1}^r{x_{c_{i}}}-{x_{v_{2}}}-\sum\limits_{i=1}^r{x_{d_{i}}}-3x_{w}] \nonumber \\
&>&0\Rightarrow \rho(G)>\rho(G^{*}).
\end{eqnarray*}
\end{proof}

\begin{theorem}\label{lem12}
  
Let $T$ be a caterpillar graph which satisfies following conditions:
\begin{enumerate}
\item maximum degree is $\varDelta$ ($\varDelta \geq 4$). 
\item $|G{v_{2}}|=|v_{2(1)}|= m_{1}$.
\item $|G{v_{3}}|=|v_{3(1)}|=|G{v_{4}}|=|v_{4(1)}|= \varDelta -2$.
\item $|G{v_{5}}|=|v_{5(1)}|= m_{2}$.
\item $\varDelta -2> m_{1}\geq m_{2}$.
\item $ m_{1}+ m_{2}+2 > \varDelta -1.$
\end{enumerate}
Let $T^{*}=T-(\sum\limits_{i=1}^{m_{2}}{v_5 v_{5(1)(i)}}+v_{5}v_{6})+(\sum\limits_{i=1}^{m_{2}}{v_{3(1)(1)}v_{5(1)(i)}}+v_{3(1)(1)}v_{6})$. Then $\rho(T)>\rho(T^{*}).$
  
\end{theorem}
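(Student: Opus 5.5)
The plan is to compare $\rho(T)$ and $\rho(T^{*})$ through the Rayleigh quotient, using the unit Perron eigenvector $x$ of $T^{*}$. This is the same scheme as in the proofs of Lemma \ref{lem10} and Lemma \ref{lem11}. Write the spine of $T$ as $v_{1}v_{2}v_{3}v_{4}v_{5}v_{6}$ and put $c=v_{3(1)(1)}$. Let $M=\{v_{5(1)(1)},\dots,v_{5(1)(m_{2})},v_{6}\}$ be the set of $m_{2}+1$ pendant vertices that are moved.

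The transformation only re-attaches the vertices of $M$, from $v_{5}$ to $c$. So for $y\in M$ and $z\notin M$,
\[
d_{T}(y,z)-d_{T^{*}}(y,z)=d(v_{5},z)-d(c,z),
\]
while distances inside $M$ and inside $V\setminus M$ are unchanged. The first step is to tabulate this difference:
\begin{itemize}
\item $-3$ for $z=v_{5}$;
\item $-1$ for $z=v_{4}$ and for each of the $\varDelta-2$ pendants of $v_{4}$;
\item $+3$ for $z=c$;
\item $+1$ for $v_{3}$, for the other $\varDelta-3$ pendants of $v_{3}$, for $v_{2}$, and for the $m_{1}$ pendants of $v_{2}$ (including $v_{1}$).
\end{itemize}
By symmetry all coordinates on a class of twin pendants are equal. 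Denote by $x_{c'}$, $x_{a}$, $x_{d}$ the common coordinates of the remaining pendants of $v_{3}$, the pendants of $v_{2}$, and the pendants of $v_{4}$, respectively. This gives
\[
\tfrac12[\rho(T)-\rho(T^{*})]\ \ge\ \Big(\sum_{y\in M}x_{y}\Big)\cdot\Phi,
\]
where
\[
\Phi=3x_{c}+x_{v_{3}}+(\varDelta-3)x_{c'}+x_{v_{2}}+m_{1}x_{a}-3x_{v_{5}}-x_{v_{4}}-(\varDelta-2)x_{d}.
\]
It therefore suffices to prove $\Phi>0$.

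To get the sign of $\Phi$, I would first collect orderings of coordinates in $T^{*}$.
\begin{itemize}
\item Lemma \ref{lem4} gives that pendants exceed their supports. This applies to $x_{v_{5}}>x_{v_{4}}$ (now $v_{5}$ is a pendant of $v_{4}$), to $x_{d}>x_{v_{4}}$, and to the pendants of $c$ exceeding $x_{c}$.
\item Lemma \ref{lem2} on the cut edges $v_{3}v_{4}$ and $v_{3}c$ compares the sums on the two sides.
\item Lemma \ref{lem5} then gives the monotonicity along the paths $v_{4}v_{3}v_{2}v_{1}$ and $v_{3}c\,y$.
\end{itemize}
The intent is to show that the heavy side, containing $v_{2}$ with its $m_{1}$ leaves and $c$ with its $m_{2}+1$ leaves, has larger coordinates than the $v_{4}$ side. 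Here the hypothesis $m_{1}+m_{2}+2>\varDelta-1$ is what makes the $v_{3}$ side outweigh the $\varDelta-1$ pendants now hanging at $v_{4}$.

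Then, mimicking Lemma \ref{lem11}, I would compute $\rho(T^{*})\Phi$ by applying the eigen-equation $\rho x_{u}=\sum_{z}d(u,z)x_{z}$ to each vertex in $\Phi$. This yields an explicit linear combination of the coordinates $x_{a},x_{v_{2}},x_{c'},x_{v_{3}},x_{c},x_{y},x_{v_{4}},x_{d},x_{v_{5}}$ whose coefficients are polynomials in $m_{1},m_{2},\varDelta$. I would then add a suitable multiple of $\Phi$ to both sides, passing to $[\rho(T^{*})+k]\Phi$ for a well-chosen $k$, so as to cancel the largest negative coefficients. The remaining negative terms are absorbed by pairing each with a larger positive term on a vertex whose coordinate dominates by the orderings above, for instance $x_{d}$ against $x_{y}$ and $x_{v_{5}}$ against $x_{a}$. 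The constraints $\varDelta-2>m_{1}\ge m_{2}$ and $m_{1}+m_{2}+2>\varDelta-1$ supply the needed inequalities between coefficients.

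The main obstacle is this last bookkeeping step: choosing the shift $k$ and the pairing so that every residual coefficient is nonnegative under the stated constraints. I expect the delicate case to be $m_{1}+m_{2}+2=\varDelta$, where the margin is smallest. If the direct pairing fails there, I would first establish an auxiliary inequality such as $2x_{v_{5}}>x_{a}$ or $x_{c}\ge x_{v_{4}}$, each by the same multiply-and-shift trick used at the start of the proof of Lemma \ref{lem11}, and feed it into the final estimate.
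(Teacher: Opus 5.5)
Your reduction is the paper's: with $x$ the Perron vector of $T^{*}$ you get $\tfrac12[\rho(T)-\rho(T^{*})]\ge(\sum_{y\in M}x_y)\,\Phi$, and your table of distance changes is correct. The proposal breaks down, however, exactly where the content of the lemma lies.

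First, there is a miscount that is not cosmetic. $v_{2(1)}=N_T(v_2)\setminus\{v_1,v_3\}$ has $m_1$ elements, so $v_2$ carries $m_1+1$ leaves ($v_1$ together with the $v_{2(1)(i)}$). Hence $\Phi$ must contain $(m_1+1)x_a$, as in the paper's final display. Your $\Phi$ is still a valid lower bound, but it is a false target. Take the only admissible case with $\varDelta=4$, namely $m_1=m_2=1$, $n=12$. The Perron vector of $T^{*}$, normalized by $x_{v_3}=1$, has $x_{v_2}=x_c\approx1.295$, $x_{v_4}\approx1.200$, $x_{c'}\approx1.521$, $x_d=x_{v_5}\approx1.708$ and $x_a\approx1.797$. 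This gives your $\Phi\approx-0.24$, whereas the correctly counted one is $\approx1.56$.

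Second, the orderings you plan to use run the wrong way. By Lemma \ref{lem2}, $\rho(T^{*})(x_{v_4}-x_{v_3})$ equals the coordinate sum of the $v_3$-side minus that of the $v_4$-side. So the $v_3$-side being heavier, which is what you intend to show, gives $x_{v_3}<x_{v_4}$; this is \eqref{12.2}. That is incompatible with your planned monotonicity along $v_4v_3v_2v_1$. In fact $v_3$ carries the minimum coordinate, and Lemma \ref{lem5} only gives monotonicity outward from $v_3$. Moreover $(\rho+2)x_\ell=S+\rho x_h$ for a leaf $\ell$ with support $h$, hence also $x_{c'}<x_d$. So the natural pairings ($x_{v_3}$ against $x_{v_4}$, $x_{c'}$ against $x_d$) fail. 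In the example, every positive term of $\Phi$ other than the leaves of $v_2$ lies below $x_d$. Meanwhile the negative part is $x_{v_4}+(\varDelta+1)x_d$, since in $T^{*}$ the vertex $v_5$ is a pendant twin of the $v_{4(1)(i)}$.

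Thus $\Phi>0$ needs genuinely quantitative inputs. The shift-and-pair computation you leave open is therefore the whole proof, not bookkeeping. The paper supplies these inputs as follows:
\begin{itemize}
\item It splits $\Phi$ as $(x_{v_3}+x_c+x_{v_{3(1)(2)}}-2x_{v_5})$ plus the bracket in \eqref{12.7}.
\item The first summand is shown positive by a direct eigen-equation computation, \eqref{12.4}.
\item The second summand uses the chain $x_{v_{3(1)(i)}}>x_c\ge x_{v_2}>x_{v_4}>x_{v_3}$.
\item It also uses the bound $\tfrac32x_{v_4}>x_{v_{4(1)(1)}}$ from \eqref{12.5}, which gives $2x_c>x_d$.
\item Finally it uses the counting fact $m_1\ge\lceil(\varDelta-2)/2\rceil$, which follows from $m_1\ge m_2$ and $m_1+m_2\ge\varDelta-2$.
\end{itemize}
That counting fact, together with $x_c\ge x_{v_2}$, is where the hypothesis $m_1\ge m_2$ enters. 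Your plan never assigns that hypothesis any role.
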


\begin{center}
\begin{tikzpicture}

\draw (0,0)node[below]{$v_{1}$} -- (3,0)node[below]{$v_{2}$} -- (6,0)node[below]{$v_{3}$} -- (9,0)node[below]{$v_{4}$} -- (12,0)node[below]{$v_{5}$} -- (15,0)node[below]{$v_{6}$};

\draw (2,1)node[above]{$v_{2(1)(1)}$} -- (3,0);
\filldraw[fill=black,draw=black](2.85,1)circle [radius=0.01];
\filldraw[fill=black,draw=black](3,1)circle [radius=0.01];
\filldraw[fill=black,draw=black](3.15,1)circle [radius=0.01]; 
\draw (4,1)node[above]{$v_{2(1)(m_{1})}$} -- (3,0);

\draw (5,1) -- (6,0);
\node at (4.7,0.7){$v_{3(1)(1)}$};
\filldraw[fill=black,draw=black](5.85,1)circle [radius=0.01];
\filldraw[fill=black,draw=black](6,1)circle [radius=0.01];
\filldraw[fill=black,draw=black](6.15,1)circle [radius=0.01]; 
\draw (7,1) -- (6,0);
\node at (6.7,1.3){$v_{3(1)(\varDelta -2)}$};

\draw (8,1)node[above]{$v_{4(1)(1)}$} -- (9,0);
\filldraw[fill=black,draw=black](8.85,1)circle [radius=0.01];
\filldraw[fill=black,draw=black](9,1)circle [radius=0.01];
\filldraw[fill=black,draw=black](9.15,1)circle [radius=0.01]; 
\draw (10,1)node[above]{$v_{4(1)(\varDelta -2)}$} -- (9,0);

\draw (11,1) -- (12,0);
\node at (10.7,0.7){$v_{5(1)(1)}$};
\filldraw[fill=black,draw=black](11.85,1)circle [radius=0.01];
\filldraw[fill=black,draw=black](12,1)circle [radius=0.01];
\filldraw[fill=black,draw=black](12.15,1)circle [radius=0.01]; 
\draw (13,1)node[above]{$v_{5(1)(m_{2})}$} -- (12,0);

\filldraw[fill=black,draw=black] (0,0) circle [radius=(0.0018)^(0.5)]; 
\filldraw[fill=black,draw=black] (3,0) circle [radius=(0.0018)^(0.5)]; 
\filldraw[fill=black,draw=black] (6,0) circle [radius=(0.0018)^(0.5)]; 
\filldraw[fill=black,draw=black] (9,0) circle [radius=(0.0018)^(0.5)]; 
\filldraw[fill=black,draw=black] (12,0) circle [radius=(0.0018)^(0.5)]; 
\filldraw[fill=black,draw=black] (15,0) circle [radius=(0.0018)^(0.5)]; 

\filldraw[fill=black,draw=black] (2,1) circle [radius=(0.0018)^(0.5)]; 
\filldraw[fill=black,draw=black] (4,1) circle [radius=(0.0018)^(0.5)]; 
\filldraw[fill=black,draw=black] (5,1) circle [radius=(0.0018)^(0.5)]; 
\filldraw[fill=black,draw=black] (7,1) circle [radius=(0.0018)^(0.5)]; 
\filldraw[fill=black,draw=black] (8,1) circle [radius=(0.0018)^(0.5)]; 
\filldraw[fill=black,draw=black] (10,1) circle [radius=(0.0018)^(0.5)]; 
\filldraw[fill=black,draw=black] (11,1) circle [radius=(0.0018)^(0.5)]; 
\filldraw[fill=black,draw=black] (13,1) circle [radius=(0.0018)^(0.5)];

\node at (6,-1){$T$};

\end{tikzpicture}
\end{center}

\begin{center}
\begin{tikzpicture}

\draw (0,0)node[below]{$v_{1}$} -- (3,0)node[below]{$v_{2}$} -- (6,0)node[below]{$v_{3}$} -- (9,0)node[below]{$v_{4}$} -- (12,0)node[below]{$v_{5}$};

\draw (2,1)node[above]{$v_{2(1)(1)}$} -- (3,0);
\filldraw[fill=black,draw=black](2.85,1)circle [radius=0.01];
\filldraw[fill=black,draw=black](3,1)circle [radius=0.01];
\filldraw[fill=black,draw=black](3.15,1)circle [radius=0.01]; 
\draw (4,1)node[above]{$v_{2(1)(m_{1})}$} -- (3,0);

\draw (5,1) -- (6,0);
\node at (4.7,0.7){$v_{3(1)(1)}$};
\filldraw[fill=black,draw=black](5.85,1)circle [radius=0.01];
\filldraw[fill=black,draw=black](6,1)circle [radius=0.01];
\filldraw[fill=black,draw=black](6.15,1)circle [radius=0.01]; 
\draw (7,1) -- (6,0);
\node at (6.7,1.3){$v_{3(1)(\varDelta -2)}$};

\draw (8,1)node[above]{$v_{4(1)(1)}$} -- (9,0);
\filldraw[fill=black,draw=black](8.85,1)circle [radius=0.01];
\filldraw[fill=black,draw=black](9,1)circle [radius=0.01];
\filldraw[fill=black,draw=black](9.15,1)circle [radius=0.01]; 
\draw (10,1)node[above]{$v_{4(1)(\varDelta -2)}$} -- (9,0);

\draw (4,2)node[above]{$v_{5(1)(1)}$} -- (5,1);
\filldraw[fill=black,draw=black](4.85,2)circle [radius=0.01];
\filldraw[fill=black,draw=black](5,2)circle [radius=0.01];
\filldraw[fill=black,draw=black](5.15,2)circle [radius=0.01]; 
\draw (6,2)node[above]{$v_{5(1)(m_{2})}$} -- (5,1);
\draw (6.5,2)node[below]{$v_{6}$} -- (5,1);

\filldraw[fill=black,draw=black] (0,0) circle [radius=(0.0018)^(0.5)]; 
\filldraw[fill=black,draw=black] (3,0) circle [radius=(0.0018)^(0.5)]; 
\filldraw[fill=black,draw=black] (6,0) circle [radius=(0.0018)^(0.5)]; 
\filldraw[fill=black,draw=black] (9,0) circle [radius=(0.0018)^(0.5)]; 
\filldraw[fill=black,draw=black] (12,0) circle [radius=(0.0018)^(0.5)]; 

\filldraw[fill=black,draw=black] (2,1) circle [radius=(0.0018)^(0.5)]; 
\filldraw[fill=black,draw=black] (4,1) circle [radius=(0.0018)^(0.5)]; 
\filldraw[fill=black,draw=black] (5,1) circle [radius=(0.0018)^(0.5)]; 
\filldraw[fill=black,draw=black] (7,1) circle [radius=(0.0018)^(0.5)]; 
\filldraw[fill=black,draw=black] (8,1) circle [radius=(0.0018)^(0.5)]; 
\filldraw[fill=black,draw=black] (10,1) circle [radius=(0.0018)^(0.5)]; 
\filldraw[fill=black,draw=black] (4,2) circle [radius=(0.0018)^(0.5)]; 
\filldraw[fill=black,draw=black] (6,2) circle [radius=(0.0018)^(0.5)]; 
\filldraw[fill=black,draw=black] (6.5,2) circle [radius=(0.0018)^(0.5)];

\node at (6,-1){$T^{*}$};

\node at (6,-2){$Figure\ 9: The\ graphs\ T\ and\ T^{*}\ in\ Lemma\ 12.$};
\end{tikzpicture}
\end{center}
\begin{proof}
Let $x$ be Perron eigenvector of $T^{*}.$\\
\indent From Lemma \ref{lem9} we can see that, for $i\in [2,\varDelta -2]$: 
\begin{eqnarray}
x_{v_{3(1)(i)}}> x_{v_{3(1)(1)}}\geq x_{v_{2}}> x_{v_{4}}.\label{12.1}
\end{eqnarray}
\indent Since $m_{1}+m_{2}+2\geq \varDelta$, then by Lemma \ref{lem1} and (\ref{12.1}), we can follow up that, for $i\in [2,\varDelta -2]$:
\begin{eqnarray}
\rho(T^{*})\cdot(x_{v_{4}}-x_{v_{3}})&>&[\sum\limits_{i=1}^{2}{x_{v_{i}}}+\sum\limits_{i=1}^{m_{1}}{x_{v_{2(1)(i)}}}+x_{v_{3(1)(1)}}+x_{v_{6}}+\sum\limits_{i=1}^{m_{2}}{x_{v_{5(1)(i)}}}] -[\sum\limits_{i=4}^{5}{x_{v_{i}}}+\sum\limits_{i=1}^{\varDelta -2}{x_{v_{4(1)(i)}}}] \nonumber \\
&>&0\Rightarrow x_{v_{4}}>x_{v_{3}}\Rightarrow x_{v_{3(1)(i)}}> x_{v_{3(1)(1)}}\geq x_{v_{2}}>x_{v_{4}}>x_{v_{3}}.\label{12.2}
\end{eqnarray}
%\indent Then we determine the sign of $3x_{v_{4}}-x_{v_{2}}$, $x_{v_{3}}+x_{v_{3(1)(1)}}+x_{v_{3(1)(2)}}-2x_{v_{5}}$ and $\frac{3}{2}x_{v_{3(1)(1)}}-x_{v_{4(1)(1)}}$.
\indent Then we can determine the sign of $x_{v_{3}}+x_{v_{3(1)(1)}}+x_{v_{3(1)(2)}}-2x_{v_{5}}$ and $\frac{3}{2}x_{v_{3(1)(1)}}-x_{v_{4(1)(1)}}$.\\
%{\bfseries Claim 3:} $3x_{v_{4}}>x_{v_{2}}$.
%\begin{eqnarray}
%& &\rho(T^{*})(3x_{v_{4}}-x_{v_{2}}) > 6x_{v_{2}}-2x_{v_{4}}\Rightarrow [\rho(T^{*})+1]\cdot(3x_{v_{4}}-x_{v_{2}}) > 0\Rightarrow 3x_{v_{4}}>x_{v_{2}}\label{12.3}
%\end{eqnarray}
\indent By (\ref{12.2}), Lemma \ref{lem1}, Lemma \ref{lem4} and Lemma \ref{lem9}, we have
\begin{eqnarray}
& &\rho(T^{*})\cdot[x_{v_{3}}+x_{v_{3(1)(1)}}+x_{v_{3(1)(2)}}-2x_{v_{5}}]\nonumber \\
&=& 8x_{v_{5}} +4\cdot \sum\limits_{i=1}^{\varDelta -2}{x_{v_{4(1)(i)}}} + 3x_{v_{4}} -2[\sum\limits_{i=1}^{m_{2}}{x_{v_{5(1)(i)}}} +x_{v_{6}}] -3x_{v_{3(1)(2)}}-3x_{v_{3(1)(1)}} -\sum\limits_{i=3}^{\varDelta -2}{x_{v_{3(1)(i)}}} \nonumber \\
& &-2x_{v_{3}} -2x_{v_{2}} +0\cdot[x_{v_{1}} +\sum\limits_{i=1}^{m_{1}}{x_{v_{2(1)(i)}}}]\nonumber \\
&=&\{[\sum\limits_{i=1}^{\varDelta -2}{x_{v_{4(1)(i)}}} +x_{v_{5}}] - [\sum\limits_{i=1}^{\varDelta -2}{x_{v_{3(1)(i)}}} +x_{v_{2}}]\}+(x_{v_{4}}-x_{v_{3}}) + (x_{v_{5}}-x_{v_{3}}) +2[x_{v_{5}}-x_{v_{3(1)(2)}}]\nonumber \\
& & +2\{[x_{v_{4}} +\sum\limits_{i=1}^{\varDelta -2}{x_{v_{4(1)(i)}}} +x_{v_{5}}] - [x_{v_{3(1)(1)}}+ \sum\limits_{i=1}^{m_{2}}{x_{v_{5(1)(i)}}} +x_{v_{6}}]\}+ [\sum\limits_{i=1}^{\varDelta -2}{x_{v_{4(1)(i)}}} +x_{v_{5}} -x_{v_{2}}] \nonumber \\
& &+x_{v_{5}}\nonumber \\
&>&0\Rightarrow x_{v_{3}}+x_{v_{3(1)(1)}}+x_{v_{3(1)(2)}}>2x_{v_{5}}.\label{12.4}
\end{eqnarray}
\indent Since $m_{1}+ m_{2}+2> \varDelta -1$, and by (\ref{12.2}), Lemma \ref{lem1}, Lemma \ref{lem4}, we have
\begin{eqnarray}
& &\rho(T^{*})[\frac{3}{2}x_{v_{4}}-x_{v_{4(1)(1)}}]\nonumber \\
&=&\frac{1}{2}[x_{v_{1}}+\sum\limits_{i=1}^{m_{1}}{x_{v_{2(1)(i)}}}+x_{v_{6}}+\sum\limits_{i=1}^{m_{2}}{x_{v_{5(1)(i)}}}] +0\cdot[x_{v_{2}} +\sum\limits_{i=1}^{\varDelta -2}{x_{v_{3(1)(i)}}}] -\frac{1}{2}[x_{v_{3}}+ \sum\limits_{i=2}^{\varDelta -2}{x_{v_{4(1)(i)}}}+x_{v_{5}}] \nonumber \\
& &-x_{v_{4}}+\frac{3}{2}x_{v_{4(1)(1)}}\nonumber \\
&=&\frac{1}{2}\{[\sum\limits_{i=1}^{m_{1}}{x_{v_{2(1)(i)}}} + x_{v_{6}}+\sum\limits_{i=1}^{m_{2}}{x_{v_{5(1)(i)}}}] - [\sum\limits_{i=1}^{\varDelta -2}{x_{v_{4(1)(i)}}}+ x_{v_{5}}]\} +\frac{1}{2}(x_{v_{1}}-x_{v_{3}}) + [x_{v_{4(1)(1)}}-x_{v_{4}}] \nonumber \\
& &+ \frac{1}{2}x_{v_{1}} + \frac{1}{2}x_{v_{4(1)(1)}}\nonumber \\
&>&0\Rightarrow \frac{3}{2}x_{v_{3(1)(1)}}>\frac{3}{2}x_{v_{4}}>x_{v_{4(1)(1)}}\label{12.5}
\end{eqnarray}
\indent As we know that  $\varDelta -2> m_{1}\geq m_{2}$ and $m_{1}+ m_{2}+2> \varDelta -1$, which implies that
\begin{eqnarray}
2m_{1}\geq m_{1}+ m_{2}\geq \varDelta -2 \Rightarrow m_{1}\geq \lceil \frac{\varDelta -2}{2}\rceil.\label{12.6}
\end{eqnarray}
\indent So by Lemma \ref{lem1}, (\ref{12.2}), (\ref{12.5}), (\ref{12.6}), 
\begin{eqnarray}
& &[2x_{v_{3(1)(1)}}+\sum\limits_{i=3}^{\varDelta -2}{x_{v_{3(1)(i)}}}+x_{v_{2}}+x_{v_{1}}+\sum\limits_{i=1}^{m_{1}}{x_{v_{2(1)(i)}}}]- [x_{v_{4}}+x_{v_{5}}+\sum\limits_{i=1}^{\varDelta -2}{x_{v_{4(1)(i)}}}]  \nonumber \\
&=&(x_{v_{2}}-x_{v_{4}})+(x_{v_{1}}-x_{v_{5}}) +[2x_{v_{3(1)(1)}}+\sum\limits_{i=3}^{\varDelta -2}{x_{v_{3(1)(i)}}}+\sum\limits_{i=1}^{m_{1}}{x_{v_{2(1)(i)}}}-\sum\limits_{i=1}^{\varDelta -2}{x_{v_{4(1)(i)}}}]   \nonumber \\
&>& (\varDelta -2)\cdot x_{v_{3(1)(1)}} +\lceil \frac{\varDelta -2}{2}\rceil \cdot x_{v_{2(1)(1)}} -(\varDelta -2)\cdot x_{v_{4(1)(1)}} \nonumber \\
&>&(\varDelta -2)\cdot x_{v_{3(1)(1)}} - \lfloor \frac{\varDelta -2}{2} \rfloor \cdot x_{v_{4(1)(1)}} \geq \lfloor \frac{\varDelta -2}{2} \rfloor \cdot [2x_{v_{3(1)(1)}}-x_{v_{4(1)(1)}}]  > 0. \label{12.7}
\end{eqnarray}
\indent By (\ref{12.4}) and (\ref{12.7}), we can determine the sign of $\rho(T)-\rho(T^{*})$.
\begin{eqnarray*}
& &\frac{1}{2}[\rho(T)-\rho(T^{*})]\nonumber \\
&\geq&\frac{1}{2}x^{t}[D(T)-D(T^{*})]x \nonumber \\
&=&[x_{v_{6}}+\sum\limits_{i=1}^{m_{2}}{x_{v_{5(1)(i)}}}] \{3x_{v_{3(1)(1)}} +[\sum\limits_{i=1}^{3}{x_{v_{i}}}+\sum\limits_{i=2}^{\varDelta -2}{x_{v_{3(1)(i)}}}+\sum\limits_{i=1}^{m_{1}}{x_{v_{2(1)(i)}}}]-[x_{v_{4}}+\sum\limits_{i=1}^{\varDelta -2}{x_{v_{4(1)(i)}}}]-3x_{v_{5}}\}  \nonumber\\
&=&[x_{v_{6}}+\sum\limits_{i=1}^{m_{2}}{x_{v_{5(1)(i)}}}]\{x_{v_{3}}+x_{v_{3(1)(1)}}+x_{v_{3(1)(2)}}+1\cdot [2x_{v_{3(1)(1)}}+\sum\limits_{i=3}^{\varDelta -2}{x_{v_{3(1)(i)}}}+x_{v_{2}}+x_{v_{1}}+\sum\limits_{i=1}^{m_{1}}{x_{v_{2(1)(i)}}}]\nonumber\\
& &\ \ \ \ \ \ \ \ \ \ \ \ \ \ \ \ \ \ \ \ \ \ \ \ \ \ -1\cdot [x_{v_{4}}+x_{v_{5}}+\sum\limits_{i=1}^{\varDelta -2}{x_{v_{4(1)(i)}}}]-2x_{v_{5}}\} \nonumber \\
&>& 0\Rightarrow \rho(T)>\rho(T^{*}).
\end{eqnarray*}
\end{proof}

\begin{theorem}\label{lem13}
  
Let $P=v_{1}v_{2}\cdots v_{q}v_{q+1}$ be a path of tree $T$ ($q\geq 4$), where $T$ satisfies the following conditions:
\begin{enumerate}
\item $|G_{v_{2a}}|+|G_{v_{2b}}|=|G_{v_{2}}|=|v_{2(1)}|.$
\item $\sum\limits_{i=4}^{q}{|G_{v_{i}}^{p}|}\geq|G_{v_{2b}}|$, $|G_{v_{2a}}|\in [1,|G_{v_{2}}|].$
\end{enumerate}
Let $T^{*}=T-\sum\limits_{i=1}^{|G_{v_{2a}}|}{v_{2}v_{2(1)(i)}}+\sum\limits_{i=1}^{|G_{v_{2a}}|}{v_{3}v_{2(1)(i)}}$, then $\rho(T)>\rho(T^{*})$.  
\end{theorem}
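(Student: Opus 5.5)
We read the statement as follows. $v_1$ is a pendant vertex of $v_2$. The neighbours of $v_2$ off the path are split into two parts: the part $G_{v_{2a}}$, consisting of the first $|G_{v_{2a}}|$ branches $v_{2(1)(i)}$, and the remaining part $G_{v_{2b}}$. Condition 1 forces every branch hanging at $v_2$ to be a single pendant vertex, so all $v_{2(1)(i)}$ are pendant. $T^{*}$ is obtained by moving the pendants of $G_{v_{2a}}$ from $v_2$ to $v_3$.

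We follow the same scheme as in Lemmas \ref{lem10}--\ref{lem12}. Take $x$ to be the Perron eigenvector of $T^{*}$ (not of $T$), and use
$$\tfrac12[\rho(T)-\rho(T^{*})]\geq \tfrac12 x^{t}[D(T)-D(T^{*})]x .$$
Moving a pendant $w\in G_{v_{2a}}$ from $v_3$ back to $v_2$ has the following effect on distances from $w$:
\begin{enumerate}
\item they decrease by $1$ to $v_1$, to $v_2$, and to the pendants of $G_{v_{2b}}$;
\item they stay equal between pendants of $G_{v_{2a}}$;
\item they increase by $1$ to every other vertex, namely $v_3$ together with $G_{v_3}$, and everything beyond $v_3$ ($v_4,\dots,v_{q+1}$ and their hanging subtrees).
\end{enumerate}
Hence
$$\tfrac12 x^{t}[D(T)-D(T^{*})]x=\Big(\sum_{w\in G_{v_{2a}}}x_w\Big)\Big[S_{\ge 3}-\big(x_{v_1}+x_{v_2}+\textstyle\sum_{b\in G_{v_{2b}}}x_b\big)\Big].$$
Here $S_{\ge3}$ is the coordinate sum over the component containing $v_3$ of $T^{*}-v_2v_3$, with the moved pendants of $G_{v_{2a}}$ removed. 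The whole task is therefore to show that this bracket is positive.

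To bound the bracket, we compare the small side with a portion of the large side, coordinate by coordinate.
\begin{itemize}
\item In $T^{*}$ the vertex $v_2$ has neighbours only $v_1$, $G_{v_{2b}}$ and $v_3$. Pendants there have equal coordinates, so the small side is $x_{v_2}+(1+|G_{v_{2b}}|)x_{v_1}$.
\item By condition 2, $\sum_{i\ge4}|G^{p}_{v_i}|\ge |G_{v_{2b}}|$. Together with the endpoint $v_{q+1}$, there are at least $|G_{v_{2b}}|+1$ pendant vertices in the far side that lie at distance $\ge 2$ from $v_3$ (beyond $v_4$ or hanging on $v_i$, $i\ge4$).
\item We match $v_1$ and each pendant of $G_{v_{2b}}$ with one of these far pendants $z$, and $v_2$ with $v_3$. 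It then suffices to show $x_{v_2}<x_{v_3}$, or at least $x_{v_2}\le x_{v_3}$ plus slack, and $x_{v_1}\le x_z$ for each matched $z$.
\end{itemize}

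For $x_{v_2}$ versus $x_{v_3}$ we use Lemma \ref{lem2} on the edge $v_2v_3$ in $T^{*}$:
$$\rho(x_{v_2}-x_{v_3})=S_3-S_2.$$
The $v_2$-side is small, with only $|G_{v_{2b}}|+2$ vertices, while the $v_3$-side contains strictly more vertices, including at least as many pendants by condition 2. We expect $S_3>S_2$, which gives $x_{v_2}>x_{v_3}$. This has the wrong sign for the matching, so instead we pair $v_2$ with a far pendant as well. In the other direction, Lemmas \ref{lem3} and \ref{lem5} give that coordinates increase along the path from $v_3$ outward toward leaves.

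The main obstacle is comparing pendant coordinates at different depths: showing $x_{v_1}\le x_z$ for a pendant $z$ farther from the "centre". We would first try Lemma \ref{lem2} applied along the path. The coordinate of a pendant $u$ with neighbour $y$ satisfies
$$\rho x_u=\rho x_y+\big(S-2x_u\big)-(\text{stuff}) ,$$
and more concretely $\rho(x_u-x_y)=S-2x_u$-type identities. From these, pendant coordinates are determined by their neighbour's coordinate up to a common additive term. So it suffices to compare $x_{v_2}$ with the coordinates of the neighbours $v_i$ ($i\ge4$) of the far pendants. Lemma \ref{lem5}, started from $x_{v_3}<x_{v_4}$ (obtained from Lemma \ref{lem2} on edge $v_3v_4$, since the $v_3$-side now carries the extra pendants of $G_{v_{2a}}$), makes the $x_{v_i}$ increase for $i\ge 4$. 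However, we still need $x_{v_i}\ge x_{v_2}$; this is the delicate point. If direct comparison fails, we would absorb the deficit using the remaining positive terms of $S_{\ge3}$, namely $x_{v_3}$, $x_{v_4}$, the pendants of $G_{v_{2a}}$'s new neighbour $v_3$ and the non-pendant vertices in $G_{v_i}$. We would also multiply through by $\rho(T^{*})+c$ as in Lemma \ref{lem10}, so that the negative coefficients are dominated, and then conclude that the bracket is positive and $\rho(T)>\rho(T^{*})$.
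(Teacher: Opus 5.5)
Your reduction is correct and matches the paper. With $x$ the Perron vector of $T^{*}$, $\frac12x^{t}[D(T)-D(T^{*})]x=S(G_{v_{2a}})\,B$, where $B=S_{\ge3}-\bigl(x_{v_1}+x_{v_2}+S(G_{v_{2b}})\bigr)$.

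\textbf{The gap: your proof that $B>0$ rests on inequalities that are false in general.}
Your coordinatewise matching needs $x_{v_1}\le x_z$ for the far pendants $z$. Since $(\rho+2)x_u=\rho x_y+S$ for a pendant $u$ with neighbour $y$, this means $x_y\ge x_{v_2}$ for their neighbours, and you try to get it from $x_{v_3}<x_{v_4}$. But Lemma \ref{lem2} on $v_3v_4$ gives $x_{v_3}<x_{v_4}$ only when the $v_3$-side outweighs the $v_4$-side, and nothing in the hypotheses guarantees that.

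A concrete counterexample: take $q=4$, $G_{v_2}=G_{v_{2a}}=\{a\}$ (so $G_{v_{2b}}=\emptyset$ and condition 2 holds trivially), and put $k$ extra pendants at $v_4$. In $T^{*}$, $d(v_2,u)-d(v_4,u)$ equals $-2$ on $\{v_1,v_2\}$, $0$ on $\{v_3,a\}$, and $2$ on the rest. Hence $\rho(x_{v_2}-x_{v_4})=2\bigl[(x_{v_4}+x_{v_5}+S(G_{v_4}))-(x_{v_1}+x_{v_2})\bigr]>0$ for large $k$. So $x_{v_4}<x_{v_2}$, every far pendant has coordinate below $x_{v_1}$, and likewise $x_{v_3}>x_{v_4}$ by Lemma \ref{lem2}.

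In this example $B>0$ holds only because the far pendants are numerous. No pairing of individual coordinates captures that, and your fallback (absorbing the deficit with leftover terms, multiplying by $\rho(T^{*})+c$) is not an argument.

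\textbf{The missing idea: argue by contradiction.} This makes the monotonicity you want available exactly when it is needed.
Suppose $B\le0$. Then $B'=B-x_{v_3}-S(G_{v_3})<0$. By the distance-difference count above, $\rho(x_{v_2}-x_{v_4})=2B'$ in general, so $x_{v_2}<x_{v_4}$.

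Lemma \ref{lem2} on $v_4v_5$ gives $\rho(x_{v_4}-x_{v_5})<B'<0$, so $x_{v_4}<x_{v_5}$. Now apply the same comparison on the edges from $v_4$ into $G_{v_4}$; their subtree sums are $<x_{v_1}+x_{v_2}+S(G_{v_{2b}})$ because $B'<0$. Together with Lemmas \ref{lem3} and \ref{lem5}, this makes coordinates increase outward from $v_4$.

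Hence $x_{v_{q+1}}>x_{v_1}$, and every pendant in $\bigcup_{i\ge4}G_{v_i}$ has coordinate $>x_{v_1}$, which is the common coordinate of the pendants at $v_2$. With condition 2 and $x_{v_4}>x_{v_2}$, this forces $B'>0$, a contradiction. That is the paper's proof.
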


\begin{center}
\begin{tikzpicture}

\draw (0,0)node[below]{$v_{1}$} -- (2,0)node[below]{$v_{2}$} -- (4,0)node[below]{$v_{3}$} -- (6,0)node[below]{$v_{4}$} -- (8,0)node[below]{$v_{5}$};

\filldraw[fill=black,draw=black](8.7,0) circle [radius=0.01];
\filldraw[fill=black,draw=black](9,0) circle [radius=0.01];
\filldraw[fill=black,draw=black](9.3,0) circle [radius=0.01];

\draw (10,0)node[below]{$v_{q}$} -- (12,0)node[below]{$v_{q+1}$};

\draw (2,0.6) ellipse[x radius=0.3,y radius=0.6];
\node at (2,0.6){$G_{v_{2}}$};
\draw (4,0.6) ellipse[x radius=0.3,y radius=0.6];
\node at (4,0.6){$G_{v_{3}}$};
\draw (6,0.6) ellipse[x radius=0.3,y radius=0.6];
\node at (6,0.6){$G_{v_{4}}$};
\draw (8,0.6) ellipse[x radius=0.3,y radius=0.6];
\node at (8,0.6){$G_{v_{5}}$};
\draw (10,0.6) ellipse[x radius=0.3,y radius=0.6];
\node at (10,0.6){$G_{v_{q}}$};

\filldraw[fill=black,draw=black] (0,0) circle [radius=(0.0018)^(0.5)]; 
\filldraw[fill=black,draw=black] (2,0) circle [radius=(0.0018)^(0.5)]; 
\filldraw[fill=black,draw=black] (4,0) circle [radius=(0.0018)^(0.5)]; 
\filldraw[fill=black,draw=black] (6,0) circle [radius=(0.0018)^(0.5)]; 
\filldraw[fill=black,draw=black] (8,0) circle [radius=(0.0018)^(0.5)]; 
\filldraw[fill=black,draw=black] (10,0) circle [radius=(0.0018)^(0.5)]; 
\filldraw[fill=black,draw=black] (12,0) circle [radius=(0.0018)^(0.5)];

\node at (6,-1){$T$};
\end{tikzpicture}
\end{center}

\begin{center}
\begin{tikzpicture}

\draw (0,0)node[below]{$v_{1}$} -- (2,0)node[below]{$v_{2}$} -- (4,0)node[below]{$v_{3}$} -- (6,0)node[below]{$v_{4}$} -- (8,0)node[below]{$v_{5}$};

\filldraw[fill=black,draw=black](8.7,0) circle [radius=0.01];
\filldraw[fill=black,draw=black](9,0) circle [radius=0.01];
\filldraw[fill=black,draw=black](9.3,0) circle [radius=0.01];

\draw (10,0)node[below]{$v_{q}$} -- (12,0)node[below]{$v_{q+1}$};

\draw (2,0.6) ellipse[x radius=0.4,y radius=0.6];
\node at (2,0.6){$G_{v_{2b}}$};
\draw (4,0.6) ellipse[x radius=0.6,y radius=0.6];
\node at (4,0.8){$G_{v_{3}}$};
\node at (4,0.4){$\cup G_{v_{2a}}$};
\draw (6,0.6) ellipse[x radius=0.3,y radius=0.6];
\node at (6,0.6){$G_{v_{4}}$};
\draw (8,0.6) ellipse[x radius=0.3,y radius=0.6];
\node at (8,0.6){$G_{v_{5}}$};
\draw (10,0.6) ellipse[x radius=0.3,y radius=0.6];
\node at (10,0.6){$G_{v_{q}}$};

\filldraw[fill=black,draw=black] (0,0) circle [radius=(0.0018)^(0.5)]; 
\filldraw[fill=black,draw=black] (2,0) circle [radius=(0.0018)^(0.5)]; 
\filldraw[fill=black,draw=black] (4,0) circle [radius=(0.0018)^(0.5)]; 
\filldraw[fill=black,draw=black] (6,0) circle [radius=(0.0018)^(0.5)]; 
\filldraw[fill=black,draw=black] (8,0) circle [radius=(0.0018)^(0.5)]; 
\filldraw[fill=black,draw=black] (10,0) circle [radius=(0.0018)^(0.5)]; 
\filldraw[fill=black,draw=black] (12,0) circle [radius=(0.0018)^(0.5)];

\node at (6,-1){$T^{*}$};

\node at (6,-2){$Figure\ 10: The\ graphs\ T\ and\ T^{*}\ in\ Lemma\ 13.$};
\end{tikzpicture}
\end{center}

\begin{proof}
Let $x$ be Perron eigenvector of $T^{*}$. If 
\begin{equation}\label{13.1}
\sum\limits_{i=3}^{q}{S(G_{v_{i}})} + \sum\limits_{i=3}^{q+1}{x_{v_{i}}} - x_{v_{1}} - x_{v_{2}} - S(G_{v_{2b}}) \leq 0,
\end{equation}
then we can say that
\begin{equation}\label{13.2}
\sum\limits_{i=4}^{q}{S(G_{v_{i}})} + \sum\limits_{i=4}^{q+1}{x_{v_{i}}} - x_{v_{1}} - x_{v_{2}} - S(G_{v_{2b}}) < 0.
\end{equation}
\indent Using inequality (\ref{13.2}) we get
\begin{equation}\label{13.3}
\rho(T^{*}) (x_{v_{2}}-x_{v_{4}})=2[\sum\limits_{i=4}^{q}{S(G_{v_{i}})}+\sum\limits_{i=4}^{q+1}{x_{v_{i}}}-x_{v_{1}}-x_{v_{2}}-S(G_{v_{2b}})]<0 \Rightarrow x_{v_{2}}< x_{v_{4}}. 
\end{equation}
\indent Now since we have
\begin{eqnarray}
\rho(T^{*})\cdot (x_{v_{4}}-x_{v_{5}}) &=&\sum\limits_{i=5}^{q}{S(G_{v_{i}})}+\sum\limits_{i=5}^{q+1}{x_{v_{i}}}-\sum\limits_{i=1}^4{x_{v_{i}}}-S(G_{v_{2a}})-S(G_{v_{2b}})-S(G_{v_{3}})-S(G_{v_{4}})  \nonumber \\
&<&\sum\limits_{i=4}^{q}{S(G_{v_{i}})}+\sum\limits_{i=4}^{q+1}{x_{v_{i}}}-x_{v_{1}}-x_{v_{2}}-S(G_{v_{2b}}) < 0\Rightarrow x_{v_{4}}<x_{v_{5}},\label{13.4}
\end{eqnarray}
by (\ref{13.3}), (\ref{13.4}) and Lemma \ref{lem5} we conclude that
\begin{equation}\label{13.5}
x_{v_{2}}<x_{v_{4}}<x_{v_{5}}<x_{v_{6}}<\ldots<x_{v_{q}}<x_{v_{q+1}}.
\end{equation}
\indent And by (\ref{13.5}) and Lemma \ref{lem1} we have
\begin{equation}\label{13.6}
x_{v_{q+1}}> x_{v_{1}}.
\end{equation}
\indent Note that $\sum\limits_{i=4}^{q}{|G_{v_{i}}^{p}|}\geq|G_{v_{2b}}|$. So by (\ref{13.5}), Lemma \ref{lem1} and Lemma \ref{lem5}, we have
\begin{equation}\label{13.6}
\sum\limits_{i=4}^{q}{S(G_{v_{i}})} \geq \sum\limits_{i=4}^{q}{S(G_{v_{i}}^{p})}>S(G_{v_{2b}}).
\end{equation}
\indent Combining all the above results, we get
\begin{eqnarray*}
\sum\limits_{i=3}^{q}{S(G_{v_{i}})}+\sum\limits_{i=3}^{q+1}{x_{v_{i}}}-\sum\limits_{i=1}^{2}{x_{v_{i}}}-S(G_{v_{2b}}) > \sum\limits_{i=4}^{q}{S(G_{v_{i}})}+\sum\limits_{i=4}^{q+1}{x_{v_{i}}}-\sum\limits_{i=1}^{2}{x_{v_{i}}}-S(G_{v_{2b}}) > 0, 
\end{eqnarray*}
a contradiction. So (\ref{13.1}) doesn't hold. Then we can determine the sign of $\rho(T)-\rho(T^{*})$.
\begin{eqnarray*}
\frac{1}{2}[\rho(T)-\rho(T^{*})]&\geq&\frac{1}{2} x^{t} [D(T)-D(T^{*})]x \nonumber \\
&=&S(G_{v_{2a}})\cdot[\sum\limits_{i=3}^{q}{S(G_{v_{i}})}+\sum\limits_{i=3}^{q+1}{x_{v_{i}}}-\sum\limits_{i=1}^{2}{x_{v_{i}}}-S(G_{v_{2b}})] > 0 \Rightarrow \rho(T)>\rho(T^{*}).
\end{eqnarray*}
\end{proof}

\begin{theorem}\label{lem14}  
Let $P=v_{1}v_{2}\cdots v_{q}v_{q+1}$ be a path of tree $T$ ($q\geq 5$), where $T$ satisfies following conditions: 
\begin{enumerate}
\item maximum degree is $\varDelta$.
\item $|G_{v_{2a}}|+|G_{v_{2b}}|=|G_{v_{2}}|=|v_{2(1)}|$, $|G_{v_{2a}}|=1$.
\item $|G_{v_{2a}}|+|G_{v_{4(1)}}|\leq \varDelta -2$.
\item $\sum\limits_{i=5}^{q}{|G_{v_{i}}^{p}|}\geq|G_{v_{2b}}|$.
\end{enumerate}
Let $T^{*}=T-\sum\limits_{i=1}^{|G_{v_{2a}}|}{v_{2}v_{2(1)(i)}}+\sum\limits_{i=1}^{|G_{v_{2a}}|}{v_{4}v_{2(1)(i)}}$, then $\rho(T)>\rho(T^{*})$.
\end{theorem}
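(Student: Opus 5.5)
Following the pattern of Lemma \ref{lem13}, I will take $x$ to be the Perron eigenvector of $T^{*}$ and compare the quadratic forms. Let $w=v_{2(1)(1)}$ with branch $G_{v_{2a}}$. Moving $G_{v_{2a}}$ from $v_{2}$ to $v_{4}$ shortens its distance by $2$ to $v_{4},v_{5},\dots,v_{q+1}$ and to $G_{v_{4}},\dots,G_{v_{q}}$. It leaves the distance to $v_{3}$ and $G_{v_{3}}$ unchanged, and lengthens it by $2$ to $v_{1},v_{2}$ and $G_{v_{2b}}$. Hence
\begin{equation*}
\frac{1}{2}[\rho(T)-\rho(T^{*})]\geq\frac{1}{2}x^{t}[D(T)-D(T^{*})]x=2S(G_{v_{2a}})\Big[\sum_{i=4}^{q}S(G_{v_{i}})+\sum_{i=4}^{q+1}x_{v_{i}}-x_{v_{1}}-x_{v_{2}}-S(G_{v_{2b}})\Big].
\end{equation*}
Here $G_{v_{4}}$ denotes the branch at $v_{4}$ in $T$, not including the moved part. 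So it suffices to show that the bracket, call it $\Phi$, is positive. Condition 3 only guarantees that $T^{*}$ still has maximum degree $\varDelta$, so it is not needed in the spectral argument itself.

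I will argue by contradiction, assuming $\Phi\le 0$. The first step is to apply Lemma \ref{lem2} to the cut edge $v_{3}v_{4}$ in $T^{*}$. This gives $\rho(T^{*})(x_{v_{3}}-x_{v_{4}})=S_{4}-S_{3}$, where $S_{4}$ now contains $S(G_{v_{2a}})$ and the branch at $v_{4}$. Thus $S_{4}-S_{3}=\Phi+S(G_{v_{2a}})-x_{v_{3}}-S(G_{v_{3}})$ up to the correct bookkeeping. I would rather use the edge $v_{4}v_{5}$: $\rho(T^{*})(x_{v_{4}}-x_{v_{5}})=S_{\ge5}-S_{\le4}$. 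Since $S_{\le 4}$ contains $x_{v_{1}},x_{v_{2}},S(G_{v_{2b}})$ together with further positive terms, we get $S_{\ge5}-S_{\le4}<\Phi\le0$, and therefore $x_{v_{4}}<x_{v_{5}}$. Lemma \ref{lem5} then yields $x_{v_{4}}<x_{v_{5}}<\cdots<x_{v_{q+1}}$, and more generally coordinates increase along every path from $v_{5}$ outward into the branches $G_{v_{i}}$ for $i\ge5$.

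The second step is to compare pendant vertices. Because coordinates increase outward from $v_{4}$, every pendant vertex in $G_{v_{i}}$ with $i\ge5$ is farther from the bulk than $v_{2}$'s pendant neighbours. The aim is to show that each such pendant coordinate exceeds every pendant coordinate of $G_{v_{2b}}$ and also exceeds $x_{v_{1}}$. Pendant vertices at distance $\ge 2$ from $v_{4}$ in the direction of increase should dominate pendant neighbours of $v_{2}$. For this I would use Lemma \ref{lem2} on the pendant edges, which gives $\rho x_{\text{leaf}}=\rho x_{\text{parent}}+S-2x_{\text{leaf}}$, so the leaf coordinate is $(\rho x_{\text{parent}}+S)/(\rho+2)$, monotone in the parent coordinate. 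I would also use the comparison $x_{v_{2}}$ versus the parents in $G_{v_{i}}$, $i\ge5$, which follows from the monotone chain provided I can also show $x_{v_{2}}<x_{v_{4}}$. That last inequality comes from the edges $v_{2}v_{3}$ and $v_{3}v_{4}$ via Lemma \ref{lem3}, once $x_{v_{3}}<x_{v_{4}}$ is obtained from the same sign argument as in (\ref{13.3}).

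With these comparisons, condition 4 gives $\sum_{i\ge5}S(G_{v_{i}}^{p})>S(G_{v_{2b}})$ and $x_{v_{q+1}}>x_{v_{1}}$. Together with the nonnegative remaining terms, this forces $\Phi>0$, a contradiction, and hence $\rho(T)>\rho(T^{*})$.

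The main obstacle is establishing $x_{v_{2}}<x_{v_{4}}$ (equivalently $x_{v_{3}}<x_{v_{4}}$) in $T^{*}$. Now $v_{3}$ sits between $v_{2}$ and $v_{4}$, and $v_{4}$ carries the moved branch, so the Lemma \ref{lem2} difference across $v_{2}v_{4}$ is no longer simply $2\Phi$ as it was in (\ref{13.3}). My first attempt is to write $\rho(T^{*})(x_{v_{2}}-x_{v_{4}})$ as a sum over vertices with weights $d(v_{2},\cdot)-d(v_{4},\cdot)\in\{-2,0,2\}$. Then $\Phi\le0$ controls the $+2$ part, while the moved vertex $w$ and $G_{v_{2a}}$ contribute with weight $+2$ on the wrong side. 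Here the hypothesis $|G_{v_{2a}}|=1$ is essential: it limits this term to a single branch, which I would absorb using Lemma \ref{lem4} and the fact that $S(G_{v_{2a}})\le S(G_{v_{2b}})+x_{v_{1}}$-type bounds from Lemma \ref{lem1}.
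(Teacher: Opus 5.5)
Your reduction matches the paper's. The identity $\frac12x^{t}[D(T)-D(T^{*})]x=2S(G_{v_{2a}})\Phi$, the contradiction hypothesis $\Phi\le0$, and $x_{v_{4}}<x_{v_{5}}$ from the cut edge $v_{4}v_{5}$ are exactly (\ref{14.1})--(\ref{14.2}), and your endgame (pendant comparison plus condition 4) is also the paper's. The gap is the step you flag as the main obstacle, and the route you propose for it fails. Write $w$ for the moved pendant vertex; Lemma~\ref{lem2} gives in $T^{*}$
\[
\rho(T^{*})(x_{v_{2}}-x_{v_{4}})=2(\Phi+x_{w}),\qquad \rho(T^{*})(x_{v_{3}}-x_{v_{4}})=\Phi+x_{w}-x_{v_{3}}-S(G_{v_{3}}).
\]
In the first identity every vertex of $\{v_{4}\}\cup G_{v_{4}}$ has the same weight $+2$ as the vertices from $v_{5}$ onward. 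They are all absorbed into $\Phi$, and nothing remains to offset $+2x_{w}$, so $\Phi\le0$ says nothing about the sign. A bound of the type $x_{w}\le x_{v_{1}}+S(G_{v_{2b}})$ cannot help, because it bounds $x_{w}$ by quantities already counted in $\Phi$. Worse, by the pendant equation $(\rho+2)x_{\ell}=\rho x_{\mathrm{parent}}+S$, the inequality $x_{w}\le x_{v_{1}}$ is equivalent to $x_{v_{4}}\le x_{v_{2}}$, the opposite of your goal. The second identity has the same uncontrolled $x_{w}$, so the (\ref{13.3})-type argument does not give $x_{v_{3}}<x_{v_{4}}$ here; in Lemma~\ref{lem13} the moved branch sat at $v_{3}$, where its weight is $0$. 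Even if you had it, $x_{v_{3}}<x_{v_{4}}$ is not equivalent to $x_{v_{2}}<x_{v_{4}}$. Lemma~\ref{lem3} only propagates an increase away from the smaller endpoint, so it says nothing about $x_{v_{2}}$ versus $x_{v_{3}}$.

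The missing idea is to compare $v_{2}$ with $v_{5}$ rather than $v_{4}$, and to use that a single pendant vertex has coordinate less than twice its neighbour's. This is where $|G_{v_{2a}}|=1$ is genuinely used. From the eigen-equations of $w$ and $v_{4}$, together with $\rho x_{v_{4}}>S-x_{v_{4}}$, you get $(\rho(T^{*})+1)(x_{w}-2x_{v_{4}})<0$, i.e.\ $x_{w}<2x_{v_{4}}$. With two or more moved pendants this fails by Lemma~\ref{lem4}. The distance differences $d(v_{2},\cdot)-d(v_{5},\cdot)$ are $-3,-1,+1,+3$ on the four zones, which yields
\[
\rho(T^{*})(x_{v_{2}}-x_{v_{5}})=3\Phi+\bigl[x_{w}-2x_{v_{4}}-2S(G_{v_{4}})-x_{v_{3}}-S(G_{v_{3}})\bigr]<0 .
\]
This is the paper's (\ref{14.3})--(\ref{14.4}). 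You never need $x_{v_{2}}<x_{v_{4}}$. Combining $x_{v_{2}}<x_{v_{5}}$ with $x_{v_{4}}<x_{v_{5}}$ and Lemma~\ref{lem5} already makes every parent of a pendant vertex in $G_{v_{i}}$, $i\ge5$, exceed $x_{v_{2}}$, and gives $x_{v_{q+1}}>x_{v_{1}}$. After that, your condition-4 argument yields $\Phi>0$ as planned.
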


\begin{center}
\begin{tikzpicture}
\draw (0,0)node[below]{$v_{1}$} -- (2,0)node[below]{$v_{2}$} -- (4,0)node[below]{$v_{3}$} -- (6,0)node[below]{$v_{4}$} -- (8,0)node[below]{$v_{5}$} -- (10,0)node[below]{$v_{6}$};

\filldraw[fill=black,draw=black](10.7,0) circle [radius=0.01];
\filldraw[fill=black,draw=black](11,0) circle [radius=0.01];
\filldraw[fill=black,draw=black](11.3,0) circle [radius=0.01];

\draw (12,0)node[below]{$v_{q}$} -- (14,0)node[below]{$v_{q+1}$};

\draw (2,0.6) ellipse[x radius=0.3,y radius=0.6];
\node at (2,0.6){$G_{v_{2}}$};
\draw (4,0.6) ellipse[x radius=0.3,y radius=0.6];
\node at (4,0.6){$G_{v_{3}}$};
\draw (6,0.6) ellipse[x radius=0.3,y radius=0.6];
\node at (6,0.6){$G_{v_{4}}$};
\draw (8,0.6) ellipse[x radius=0.3,y radius=0.6];
\node at (8,0.6){$G_{v_{5}}$};
\draw (10,0.6) ellipse[x radius=0.3,y radius=0.6];
\node at (10,0.6){$G_{v_{6}}$};
\draw (12,0.6) ellipse[x radius=0.3,y radius=0.6];
\node at (12,0.6){$G_{v_{q}}$};

\filldraw[fill=black,draw=black] (0,0) circle [radius=(0.0018)^(0.5)]; 
\filldraw[fill=black,draw=black] (2,0) circle [radius=(0.0018)^(0.5)]; 
\filldraw[fill=black,draw=black] (4,0) circle [radius=(0.0018)^(0.5)]; 
\filldraw[fill=black,draw=black] (6,0) circle [radius=(0.0018)^(0.5)]; 
\filldraw[fill=black,draw=black] (8,0) circle [radius=(0.0018)^(0.5)]; 
\filldraw[fill=black,draw=black] (10,0) circle [radius=(0.0018)^(0.5)]; 
\filldraw[fill=black,draw=black] (12,0) circle [radius=(0.0018)^(0.5)]; 
\filldraw[fill=black,draw=black] (14,0) circle [radius=(0.0018)^(0.5)];

\node at (7,-1){$T$};
\end{tikzpicture}
\end{center}

\begin{center}
\begin{tikzpicture}

\draw (0,0)node[below]{$v_{1}$} -- (2,0)node[below]{$v_{2}$} -- (4,0)node[below]{$v_{3}$} -- (6,0)node[below]{$v_{4}$} -- (8,0)node[below]{$v_{5}$} -- (10,0)node[below]{$v_{6}$};

\filldraw[fill=black,draw=black](10.7,0) circle [radius=0.01];
\filldraw[fill=black,draw=black](11,0) circle [radius=0.01];
\filldraw[fill=black,draw=black](11.3,0) circle [radius=0.01];

\draw (12,0)node[below]{$v_{q}$} -- (14,0)node[below]{$v_{q+1}$};

\draw (2,0.6) ellipse[x radius=0.4,y radius=0.6];
\node at (2,0.6){$G_{v_{2b}}$};
\draw (4,0.6) ellipse[x radius=0.3,y radius=0.6];
\node at (4,0.6){$G_{v_{3}}$};
\draw (6,0.6) ellipse[x radius=0.6,y radius=0.6];
\node at (6,0.8){$G_{v_{4}}$};
\node at (6,0.4){$\cup G_{v_{2a}}$};
\draw (8,0.6) ellipse[x radius=0.3,y radius=0.6];
\node at (8,0.6){$G_{v_{5}}$};
\draw (10,0.6) ellipse[x radius=0.3,y radius=0.6];
\node at (10,0.6){$G_{v_{6}}$};
\draw (12,0.6) ellipse[x radius=0.3,y radius=0.6];
\node at (12,0.6){$G_{v_{q}}$};

\filldraw[fill=black,draw=black] (0,0) circle [radius=(0.0018)^(0.5)]; 
\filldraw[fill=black,draw=black] (2,0) circle [radius=(0.0018)^(0.5)]; 
\filldraw[fill=black,draw=black] (4,0) circle [radius=(0.0018)^(0.5)]; 
\filldraw[fill=black,draw=black] (6,0) circle [radius=(0.0018)^(0.5)]; 
\filldraw[fill=black,draw=black] (8,0) circle [radius=(0.0018)^(0.5)]; 
\filldraw[fill=black,draw=black] (10,0) circle [radius=(0.0018)^(0.5)]; 
\filldraw[fill=black,draw=black] (12,0) circle [radius=(0.0018)^(0.5)]; 
\filldraw[fill=black,draw=black] (14,0) circle [radius=(0.0018)^(0.5)];

\node at (7,-1){$T^{*}$};

\node at (7,-2){$Figure\ 11: The\ graphs\ T\ and\ T^{*}\ in\ Lemma\ 14.$};
\end{tikzpicture}
\end{center}

\begin{proof}
Let $x$ be Perron eigenvector of $T^{*}$. If 
\begin{equation} \label{14.1}
\sum\limits_{i=4}^q{S(G_{v_{i}})}+\sum\limits_{i=4}^{q+1}{x_{v_{i}}}-S(G_{v_{2b}})-\sum\limits_{i=1}^2{x_{v_{i}}} \leq 0,
\end{equation}
then using inequality (\ref{14.1}), we get
\begin{eqnarray}
\rho(T^{*})\cdot(x_{v_{4}}-x_{v_{5}}) &=& \sum\limits_{i=5}^q{S(G_{v_{i}})}+\sum\limits_{i=5}^{q+1}{x_{v_{i}}}-S(G_{v_{2a}})-S(G_{v_{2b}})-\sum\limits_{i=3}^4{S(G_{v_{i}})}-\sum\limits_{i=1}^4{x_{v_{i}}} \nonumber \\
&<& 0 \Rightarrow x_{v_{4}}<x_{v_{5}}.\label{14.2}
\end{eqnarray}
%which implies that $x_{v_{4}}<x_{v_{5}}$.\\
\indent Since
\begin{eqnarray}
\rho(T^{*})\cdot (S(G_{v_{2a}})-2x_{v_{4}})<x_{v_{4}}-2S(G_{v_{2a}}) \Rightarrow[\rho(T^{*})+1] (S(G_{v_{2a}})-2x_{v_{4}})<0 \Rightarrow S(G_{v_{2a}})<2x_{v_{4}},\label{14.3}
\end{eqnarray}
\noindent by (\ref{14.3}) we have
\begin{eqnarray}
& &\rho(T^{*})\cdot (x_{v_{2}}-x_{v_{5}})\nonumber \\
&=&3[\sum\limits_{i=5}^q{S(G_{v_{i}})}+\sum\limits_{i=5}^{q+1}{x_{v_{i}}}-S(G_{v_{2b}})-\sum\limits_{i=1}^2{x_{v_{i}}}] +[S(G_{v_{2a}})+S(G_{v_{4}})+x_{v_{4}}-S(G_{v_{3}})-x_{v_{3}}] \nonumber \\
&=&3[\sum\limits_{i=4}^q{S(G_{v_{i}})}+\sum\limits_{i=4}^{q+1}{x_{v_{i}}}-S(G_{v_{2b}})-\sum\limits_{i=1}^2{x_{v_{i}}}]+[S(G_{v_{2a}})-2S(G_{v_{4}})-2x_{v_{4}}-S(G_{v_{3}})-x_{v_{3}}] \nonumber \\
&<&S(G_{v_{2a}})-2x_{v_{4}} < 0 \Rightarrow x_{v_{2}} < x_{v_{5}}.\label{14.4}
\end{eqnarray}
\indent By (\ref{14.2}), (\ref{14.4}) and Lemma \ref{lem5} we conclude that
%\indent So, we get
\begin{eqnarray}
x_{v_{2}} < x_{v_{5}} < x_{v_{6}} < \cdots < x_{v_{q}} < x_{v_{q+1}}.\label{14.5}
\end{eqnarray}
\indent And by (\ref{14.5}) and Lemma \ref{lem1} we have
\begin{equation}\label{14.6}
x_{v_{q+1}}> x_{v_{1}}.
\end{equation}
\indent Note that $\sum\limits_{i=5}^{q}{|G_{v_{i}}^{p}|}\geq |G_{v_{2b}}|$. So by (\ref{14.5}), Lemma \ref{lem1} and Lemma \ref{lem5} we have
\begin{equation} \label{14.7}
\sum\limits_{i=5}^{q}{S(G_{v_{i}})} \geq \sum\limits_{i=5}^{q}{S(G_{v_{i}}^{p})} > S(G_{v_{2b}}).
\end{equation}
%\indent It's easy to conclude from above that $x_{v_{q}} > x_{v_{2}}$ and $x_{v_{q+1}} > x_{v_{1}}$.\\
\indent Combining the results above, we know
$$\sum\limits_{i=4}^q{S(G_{v_{i}})} + \sum\limits_{i=4}^{q+1}{x_{v_{i}}} - S(G_{v_{2b}}) - \sum\limits_{i=1}^2{x_{v_{i}}} > 0,$$
a contradiction. So (\ref{14.1}) doesn't hold. Then we can determine the sign of $\rho(T)-\rho(T^{*})$.
\begin{eqnarray*}
\frac{1}{2}[\rho(T)-\rho(T^{*})]&\geq&\frac{1}{2}x^{t}[D(T)-D(T^{*})]x \nonumber \\
&=&S(G_{v_{2a}})\cdot2\cdot[\sum\limits_{i=4}^q{S(G_{v_{i}})}+\sum\limits_{i=4}^{q+1}{x_{v_{i}}}-S(G_{v_{2b}})-\sum\limits_{i=1}^2{x_{v_{i}}}] > 0\Rightarrow \rho(T)>\rho(T^{*}).
\end{eqnarray*}
%so $\rho(T)>\rho(T^{*})$.\\
\indent If $|G_{v_{2a}}|+|G_{v_{4(1)}}|<\varDelta -2$ and $|\{v_{1}\}| + |G_{v_{2}}| + |G_{v_{4(1)}}| > \varDelta -2$, then repeatedly using the approach in the proof above, we can get $T^{**}$ with $|G_{v_{2a}}|+|G_{v_{4(1)}}|=\varDelta -2$ and $\rho(T)>\rho(T^{**})$. 
\end{proof}

\begin{theorem}\label{lem15}  
Let $P=v_{1}v_{2}\cdots v_{q}v_{q+1}$ be a path of tree $T$ ($q\geq 5$). Let $G_{v_{2}}^{''}=G_{v_{2}}\cup \{v_{1}\}\cup  v_{1}v_{4(1)(m_{3}+1)}$. $T$ satisfies following conditions:
\begin{enumerate}
\item $|G_{v_{2}}|=|v_{2(1)}|=m_{1}\leq \varDelta -2$.
\item $\sum\limits_{i=1}^{m_{3}}{|G_{v_{4(1)(i)}}^{p}|}+\sum\limits_{i=5}^q{|G_{v_{i}}^{p}|}\geq |v_{3(1)}|=\varDelta -2$. 
\item $|G_{v_{3(1)(i)}}| = |v_{3(2)(i)}| = \varDelta -2$, $i\in [1,m_{2}]$.
\item $deg(v_{3(1)(i)})=1$, $i\in [m_{2}+1,\varDelta-2]$.
\item $(m_{2}+1)+m_{3}\leq \varDelta -2$.
\end{enumerate}
Let $T^{*}=T-(\sum\limits_{i=1}^{m_{1}}{v_{2}v_{2(1)(i)}}+v_{2}v_{1}+\sum\limits_{i=1}^{m_{2}}{v_{3}v_{3(1)(i)}}) + (v_{4(1)(m_{3}+1)}v_{1}+\sum\limits_{i=1}^{m_{1}}{v_{4(1)(m_{3}+1)}v_{2(1)(i)}}+\sum\limits_{i=1}^{m_{2}}{v_{4(1)(m_{3}+i+1)}v_{3(1)(i)}})$, then $\rho(T)>\rho(T^{*})$.  
\end{theorem}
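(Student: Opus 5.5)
We will follow the template of Lemmas \ref{lem13} and \ref{lem14}. Let $x$ be the Perron eigenvector of $T^{*}$ (unit, positive). The plan has three steps: write $\frac{1}{2}[\rho(T)-\rho(T^{*})]$ as a combination of the moved blocks, show the relevant bracket is positive, and conclude from the Rayleigh quotient.

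\textbf{Bookkeeping.} In $T^{*}$ the vertex $w:=v_{4(1)(m_{3}+1)}$ carries $v_{1}$ and the $m_{1}$ branches $G_{v_{2(1)(i)}}'$. For $i\in[1,m_{2}]$, the vertex $v_{4(1)(m_{3}+i+1)}$ carries the subtree rooted at $v_{3(1)(i)}$. Condition 5 ensures these vertices exist among $v_{4(1)}$ and that degrees stay at most $\varDelta$. Let $A=S(G_{v_{2}}^{''})$ be the sum of coordinates over $v_{1}$ and the old branches of $v_{2}$. Let $B_{i}$ be the coordinate sum of the subtree at $v_{3(1)(i)}$. Let $R$ be the sum over the part of $T^{*}$ on the $v_{5},\dots,v_{q+1}$ side together with the $G_{v_{4(1)(j)}}$, $j\le m_{3}$.

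The difference $x^{t}[D(T)-D(T^{*})]x$ splits into two kinds of terms. The first kind are distances from the moved blocks to fixed vertices: the blocks move from distance $1$ or $2$ of $v_{3}$ to distance $2$ of $v_{4}$. They get closer to $R$ and farther from the remaining part $v_{2},v_{3},v_{3(1)(i)}$ ($i>m_{2}$). The second kind are distances among the moved blocks themselves, which get longer. Collecting terms, we will write
\[
\tfrac{1}{2}x^{t}[D(T)-D(T^{*})]x=A\cdot\Phi_{1}+\sum_{i=1}^{m_{2}}B_{i}\cdot\Phi_{2}-(\text{internal terms}),
\]
where $\Phi_{1},\Phi_{2}$ are linear forms of the type ``$2R+\dots-2x_{v_{2}}-2x_{v_{3}}-\dots$''.

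\textbf{Positivity of the brackets.} This is the main obstacle. As in Lemma \ref{lem13}, we argue by contradiction. Suppose the bracket is $\le 0$. Using Lemma \ref{lem2} on the cut edges $v_{3}v_{4}$ and $v_{4}v_{5}$, we get $x_{v_{3}}<x_{v_{4}}<x_{v_{5}}$. Lemma \ref{lem5} then propagates this monotonicity out to $v_{q+1}$ and into the branches $G_{v_{4(1)(j)}}$, $j\le m_{3}$. With that monotonicity, Lemma \ref{lem1} and Lemma \ref{lem4} show that each pendant vertex counted in condition 2 has coordinate exceeding that of each leaf $v_{3(1)(i)}$, $i>m_{2}$, and that of $v_{2}$. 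Hence condition 2 ($\ge\varDelta-2$ such pendants) gives
\[
R>\sum_{i=m_{2}+1}^{\varDelta-2}x_{v_{3(1)(i)}}+\sum_{i\le m_{2}}B_{i}\cdot(\text{comparable weight}),
\]
which contradicts the assumed sign.

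For the blocks $B_{i}$, condition 3 says each $v_{3(1)(i)}$ has exactly $\varDelta-2$ pendant children. Lemma \ref{lem9} and Lemma \ref{lem10}-type estimates, with the multiplier trick $[\rho(T^{*})+c](\cdot)>0$ used in Lemmas \ref{lem11} and \ref{lem12}, then give $B_{i}$ and $A$ bounds comparable to $x_{v_{4}}$. This lets the internal (mutual-distance) terms be absorbed. The most delicate points are exactly this absorption and the fact that $A$ includes $v_{1}$.

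\textbf{Conclusion.} Once every bracket is shown positive, we have $\frac{1}{2}[\rho(T)-\rho(T^{*})]\ge\frac{1}{2}x^{t}[D(T)-D(T^{*})]x>0$, and hence $\rho(T)>\rho(T^{*})$. If the single-shot inequality proves too coarse, we will instead perform the move in stages. First move $v_{1}$ and $G_{v_{2}}$ to $w$ via an argument like Lemma \ref{lem14}. Then move the $m_{2}$ blocks one at a time, checking that each stage strictly decreases $\rho$ using the same contradiction scheme.
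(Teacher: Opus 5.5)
There is a genuine gap, and it is in your bookkeeping. You hang the whole subtree rooted at $v_{3(1)(i)}$ on $v_{4(1)(m_3+i+1)}$. But then $d(v_{3(1)(i)},v_4)=2$ both in $T$ and in $T^{*}$, so the block $B_i$ gets no closer to $v_4$, to $v_5,\dots,v_{q+1}$, or to any $G_{v_{4(1)(j)}}$; ``distance $1$ from $v_3$'' and ``distance $2$ from $v_4$'' describe the same position. Under your reading the block gains $2$ only against the single vertex $v_{4(1)(m_3+i+1)}$. It loses $2$ against $v_2$, $v_3$, the remaining leaves $v_{3(1)(k)}$ with $k>m_2$ (at least one exists, by condition 5) and the other blocks, and it loses $1$ against $A$. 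So your $\Phi_2$ contains no $R$ at all. It is $2x_{v_{4(1)(m_3+i+1)}}-2\bigl(x_{v_2}+x_{v_3}+\sum_{k>m_2}x_{v_{3(1)(k)}}\bigr)$, one coordinate against at least three, and the internal terms are negative on top of that. No contradiction argument of the Lemma~\ref{lem13} type can make this positive, and doing the moves in stages does not change that. For $m_2\ge 1$ this move also leaves the diameter at $5$ in Case 2.1.2, where the lemma is supposed to produce a tree of diameter $4$.

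The move the paper actually proves is different (see Figure 12 and the distance computation in its proof; the edge list in the statement is loose on this point). There, $v_{3(1)(i)}$ stays a leaf of $v_3$, and only its $\varDelta-2$ pendant children are re-hung on $v_{4(1)(m_3+i+1)}$. Likewise $v_1$ and the leaves of $v_2$ go to $v_{4(1)(m_3+1)}$, and $v_2$ becomes a leaf of $v_3$. Every moved block is then a set of leaves whose parent shifts along a path of length $3$ through $v_3v_4$. Any two moved vertices are at the same distance before and after ($4$ across blocks, $2$ within a block), so there are no internal terms at all. Each bracket has the shape $3x_{\mathrm{new}}+[\text{$v_4$ and beyond}]-[\text{$v_3$-side}]-3x_{\mathrm{old}}$.

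The real work is neutralising the $\pm3$ terms, which is exactly what your ``absorption'' step leaves unspecified. The paper proves
\[x_{v_4}+3x_{v_{4(1)(m_3+1)}}>2x_{v_2},\qquad x_{v_4}+3x_{v_{4(1)(m_3+j+1)}}>2x_{v_{3(1)(j)}}.\]
It does so by expanding $\rho(T^{*})$ times each difference with the eigen-equation, using Lemma~\ref{lem9} for the leaf sums, and multiplying through by $\rho(T^{*})+2$. This reduces the brackets to \eqref{15.3} and \eqref{15.4}, and only then does your contradiction scheme apply. Assuming negativity gives $x_{v_3}<x_{v_5}$ and $x_{v_4}<x_{v_5}$, and hence monotonicity out to $v_{q+1}$. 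It does not give $x_{v_3}<x_{v_4}$: Lemma~\ref{lem2} on $v_3v_4$ is uncontrolled here, because the $v_4$-side contains all the moved blocks. One then compares the $\varDelta-2$ leaves $v_{3(1)(i)}$ of $v_3$ with the at least $\varDelta-2$ pendants supplied by condition 2.
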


\begin{center}
\begin{tikzpicture}
\draw (0,0)node[below]{$v_{1}$} -- (2,0)node[below]{$v_{2}$} -- (4,0)node[above]{$v_{3}$} -- (7,0)node[below]{$v_{4}$} -- (10,0)node[above]{$v_{5}$};

\filldraw[fill=black,draw=black](10.7,0) circle [radius=0.01];
\filldraw[fill=black,draw=black](11,0) circle [radius=0.01];
\filldraw[fill=black,draw=black](11.3,0) circle [radius=0.01];

\draw (12,0)node[above]{$v_{q}$} -- (14,0)node[below]{$v_{q+1}$};

\draw (2,0.6) ellipse[x radius=0.3,y radius=0.6];
\node at (2,0.6){$G_{v_{2}}$};

\draw (1.25,-1)node[above left]{$v_{3(1)(1)}$} -- (4,0);
\filldraw[fill=black,draw=black] (1.25,-1) circle [radius=(0.0018)^(0.5)]; 
\draw (1.25,-1.3) ellipse[x radius=0.8,y radius=0.3];
\node at (1.25,-1.3){$G_{v_{3(1)(1)}}$};

\filldraw[fill=black,draw=black](1.75,-1) circle [radius=0.01];
\filldraw[fill=black,draw=black](2.05,-1) circle [radius=0.01];
\filldraw[fill=black,draw=black](2.35,-1) circle [radius=0.01];

\draw (3.25,-1) -- (4,0);
\node at (4.5,-0.75){$v_{3(1)(m_{2})}$};
\filldraw[fill=black,draw=black] (3.25,-1) circle [radius=(0.0018)^(0.5)]; 
\draw (3.25,-1.3) ellipse[x radius=1,y radius=0.3];
\node at (3.25,-1.3){$G_{v_{3(1)(m_{2})}}$};

\filldraw[fill=black,draw=black](6.5,-1) circle [radius=0.01];
\filldraw[fill=black,draw=black](6.85,-1) circle [radius=0.01];
\filldraw[fill=black,draw=black](7.15,-1) circle [radius=0.01];

\draw (6,-1)node[below]{$v_{3(1)(m_{2}+1)}$} -- (4,0);
\filldraw[fill=black,draw=black] (6,-1) circle [radius=(0.0018)^(0.5)];

\draw (8,-1)node[below right]{$v_{3(1)(\varDelta -2)}$} -- (4,0);
\filldraw[fill=black,draw=black] (8,-1) circle [radius=(0.0018)^(0.5)];

\draw (3.25,1)node[below]{$v_{4(1)(1)}$} -- (7,0);
\draw (3.25,1.3) ellipse[x radius=0.8,y radius=0.3];
\node at (3.25,1.3){$G_{v_{4(1)(1)}}$};
\filldraw[fill=black,draw=black] (3.25,1) circle [radius=(0.0018)^(0.5)];

\filldraw[fill=black,draw=black](3.8125,1) circle [radius=0.01];
\filldraw[fill=black,draw=black](4.1875,1) circle [radius=0.01];
\filldraw[fill=black,draw=black](4.5625,1) circle [radius=0.01];

\draw (5.5,1) -- (7,0);
\node at (5.05,0.755){$v_{4(1)(m_{3})}$};
\draw (5.5,1.3) ellipse[x radius=1,y radius=0.3];
\node at (5.5,1.3){$G_{v_{4(1)(m_{3})}}$};
\filldraw[fill=black,draw=black] (5.5,1) circle [radius=(0.0018)^(0.5)];

\draw (7.5,1)node[above]{$v_{4(1)(m_{3}+1)}$} -- (7,0);
\filldraw[fill=black,draw=black] (7.5,1) circle [radius=(0.0018)^(0.5)];

\filldraw[fill=black,draw=black](8.625,1) circle [radius=0.01];
\filldraw[fill=black,draw=black](9.375,1) circle [radius=0.01];
\filldraw[fill=black,draw=black](10.125,1) circle [radius=0.01];

\draw (13,1)node[above]{$v_{4(1)(\varDelta -2)}$} -- (7,0);
\filldraw[fill=black,draw=black] (13,1) circle [radius=(0.0018)^(0.5)];

\draw (10,-0.6) ellipse[x radius=0.3,y radius=0.6];
\node at (10,-0.6){$G_{v_{5}}$};

\draw (12,-0.6) ellipse[x radius=0.3,y radius=0.6];
\node at (12,-0.6){$G_{v_{q}}$};

\filldraw[fill=black,draw=black] (0,0) circle [radius=(0.0018)^(0.5)]; 
\filldraw[fill=black,draw=black] (2,0) circle [radius=(0.0018)^(0.5)]; 
\filldraw[fill=black,draw=black] (4,0) circle [radius=(0.0018)^(0.5)]; 
\filldraw[fill=black,draw=black] (7,0) circle [radius=(0.0018)^(0.5)]; 
\filldraw[fill=black,draw=black] (10,0) circle [radius=(0.0018)^(0.5)]; 
\filldraw[fill=black,draw=black] (12,0) circle [radius=(0.0018)^(0.5)]; 
\filldraw[fill=black,draw=black] (14,0) circle [radius=(0.0018)^(0.5)];

\node at (7,-2){$T$};
\end{tikzpicture}
\end{center}

\begin{center}
\begin{tikzpicture}

\draw (2,0)node[below]{$v_{2}$} -- (4,0)node[above]{$v_{3}$} -- (7,0)node[below]{$v_{4}$} -- (10,0)node[above]{$v_{5}$};

\filldraw[fill=black,draw=black](10.7,0) circle [radius=0.01];
\filldraw[fill=black,draw=black](11,0) circle [radius=0.01];
\filldraw[fill=black,draw=black](11.3,0) circle [radius=0.01];

\draw (12,0)node[above]{$v_{q}$} -- (14,0)node[below]{$v_{q+1}$};

\draw (1.25,-1)node[below]{$v_{3(1)(1)}$} -- (4,0);
\filldraw[fill=black,draw=black] (1.25,-1) circle [radius=(0.0018)^(0.5)];

\filldraw[fill=black,draw=black](1.75,-1) circle [radius=0.01];
\filldraw[fill=black,draw=black](2.05,-1) circle [radius=0.01];
\filldraw[fill=black,draw=black](2.35,-1) circle [radius=0.01];

\draw (3.25,-1)node[below]{$v_{3(1)(m_{2})}$} -- (4,0);
\filldraw[fill=black,draw=black] (3.25,-1) circle [radius=(0.0018)^(0.5)];

\filldraw[fill=black,draw=black](6.5,-1) circle [radius=0.01];
\filldraw[fill=black,draw=black](6.85,-1) circle [radius=0.01];
\filldraw[fill=black,draw=black](7.15,-1) circle [radius=0.01];

\draw (6,-1)node[below]{$v_{3(1)(m_{2}+1)}$} -- (4,0);
\filldraw[fill=black,draw=black] (6,-1) circle [radius=(0.0018)^(0.5)];

\draw (8,-1)node[below right]{$v_{3(1)(\varDelta -2)}$} -- (4,0);
\filldraw[fill=black,draw=black] (8,-1) circle [radius=(0.0018)^(0.5)];

\draw (3.25,1)node[below]{$v_{4(1)(1)}$} -- (7,0);
\draw (3.25,1.3) ellipse[x radius=0.8,y radius=0.3];
\node at (3.25,1.3){$G_{v_{4(1)(1)}}$};
\filldraw[fill=black,draw=black] (3.25,1) circle [radius=(0.0018)^(0.5)];

\filldraw[fill=black,draw=black](3.8125,1) circle [radius=0.01];
\filldraw[fill=black,draw=black](4.1875,1) circle [radius=0.01];
\filldraw[fill=black,draw=black](4.5625,1) circle [radius=0.01];

\draw (5.5,1) -- (7,0);
\node at (5.05,0.75){$v_{4(1)(m_{3})}$};
\draw (5.5,1.3) ellipse[x radius=1,y radius=0.3];
\node at (5.5,1.3){$G_{v_{4(1)(m_{3})}}$};
\filldraw[fill=black,draw=black] (5.5,1) circle [radius=(0.0018)^(0.5)];

\draw (7.9,1) -- (7,0);
\node at (6.75,0.8){$v_{4(1)(m_{3}+1)}$};
\draw (7.9,1.3) ellipse[x radius=0.8,y radius=0.3];
\node at (7.9,1.3){$G_{v_{2}}^{''}$};
\filldraw[fill=black,draw=black] (7.9,1) circle [radius=(0.0018)^(0.5)];

\filldraw[fill=black,draw=black](8.25,1) circle [radius=0.01];
\filldraw[fill=black,draw=black](8.5,1) circle [radius=0.01];
\filldraw[fill=black,draw=black](8.75,1) circle [radius=0.01];

\draw (12,1) -- (7,0);
\node at (10.1,1){$v_{4(1)(m_{3}+m_{2}+1)}$};
\draw (12,1.3) ellipse[x radius=1,y radius=0.3];
\node at (12,1.3){$G_{v_{3(1)(m_{2})}}$};
\filldraw[fill=black,draw=black] (12,1) circle [radius=(0.0018)^(0.5)];

\filldraw[fill=black,draw=black](13,1) circle [radius=0.01];
\filldraw[fill=black,draw=black](13.3,1) circle [radius=0.01];
\filldraw[fill=black,draw=black](13.6,1) circle [radius=0.01];

\draw (13.9,1)node[below]{$v_{4(1)(\varDelta -2)}$} -- (7,0);
\filldraw[fill=black,draw=black] (13.9,1) circle [radius=(0.0018)^(0.5)];

\draw (10,-0.6) ellipse[x radius=0.3,y radius=0.6];
\node at (10,-0.6){$G_{v_{5}}$};

\draw (12,-0.6) ellipse[x radius=0.3,y radius=0.6];
\node at (12,-0.6){$G_{v_{q}}$};

\filldraw[fill=black,draw=black] (2,0) circle [radius=(0.0018)^(0.5)]; 
\filldraw[fill=black,draw=black] (4,0) circle [radius=(0.0018)^(0.5)]; 
\filldraw[fill=black,draw=black] (7,0) circle [radius=(0.0018)^(0.5)]; 
\filldraw[fill=black,draw=black] (10,0) circle [radius=(0.0018)^(0.5)]; 
\filldraw[fill=black,draw=black] (12,0) circle [radius=(0.0018)^(0.5)]; 
\filldraw[fill=black,draw=black] (14,0) circle [radius=(0.0018)^(0.5)];

\node at (7,-2){$T^{*}$};

\node at (7,-3){$Figure\ 12: The\ graphs\ T\ and\ T^{*}\ in\ Lemma\ 15.$};
\end{tikzpicture}
\end{center}

\begin{proof}
Let $x$ be Perron eigenvector of $T^{*}$. By Lemma \ref{lem9}, we have
\begin{eqnarray}
& &\rho(T^{*})\cdot[x_{v_{4}}+3x_{v_{4(1)(m_{3}+1)}}-2x_{v_{2}}]>[7x_{v_{2}} + 7\sum\limits_{i=1}^{\varDelta -2}{x_{v_{3(1)(i)}}} - 3S(G_{v_{2}}^{''})]
+4x_{v_{2}}-x_{v_{4}}-5x_{v_{4(1)(m_{3}+1)}}\nonumber \\
&\Rightarrow&[\rho(T^{*})+ 2]\cdot[x_{v_{4}}+3x_{v_{4(1)(m_{3}+1)}}-2x_{v_{2}}] > x_{v_{4}}+x_{v_{4(1)(m_{3}+1)}} > 0\nonumber \\
&\Rightarrow&x_{v_{4}}+3x_{v_{4(1)(m_{3}+1)}}>2x_{v_{2}}.\label{15.1}
\end{eqnarray}
\indent Similarly we can prove that for $j\in [1, m_{2}]$
\begin{eqnarray}
x_{v_{4}} + 3x_{v_{4(1)(m_{3}+j+1)}} >2x_{v_{3(1)(j)}}.\label{15.2}
\end{eqnarray}
\indent Thus by (\ref{15.1}) and (\ref{15.2}), we have
\begin{eqnarray} 
& &\frac{1}{2}[\rho(T)-\rho(T^{*})] \nonumber \\
&\geq&\frac{1}{2}x^{t}[D(T)-D(T^{*})]x  \nonumber \\
&=&[x_{v_{1}}+\sum\limits_{i=1}^{m_{1}}{x_{v_{2(1)(i)}}}]\cdot \{3x_{v_{4(1)(m_{3}+1)}}+1\cdot[x_{v_{4}}+\sum\limits_{i=1}^{m_{3}}{x_{v_{4(1)(i)}}}+\sum\limits_{i=m_{3}+2}^{\varDelta -2}{x_{v_{4(1)(i)}}}+ \sum\limits_{i=1}^{m_{3}}{S(G_{v_{4(1)(i)}})}\nonumber \\
& &\ \ \ \ \ \ \ \ \ \ \ \ \ \ \ \ \ \ \ \ \ \ \ \ \ \ \ +\sum\limits_{i=5}^{q}{S(G_{v_{i}})}+\sum\limits_{i=5}^{q+1}{x_{v_{i}}}]-[x_{v_{3}}+\sum\limits_{i=1}^{\varDelta -2}{x_{v_{3(1)(i)}}}]-3x_{v_{2}}\} \nonumber \\
& &+ \sum\limits_{j=1}^{m_{2}}{S(G_{v_{3(1)(j)}})}\cdot \{3x_{v_{4(1)(m_{3}+j+1)}}+[x_{v_{4}}+\sum\limits_{i=1}^{m_{3}+j}{x_{v_{4(1)(i)}}}+\sum\limits_{i=m_{3}+j+2}^{\varDelta -2}{x_{v_{4(1)(i)}}}+\sum\limits_{i=1}^{m_{3}}{S(G_{v_{4(1)(i)}})}\nonumber \\
& &\ \ \ \ \ \ \ \ \ \ \ \ \ \ \ \ \ \ \ \ \ \ \ \ \ +\sum\limits_{i=5}^{q}{S(G_{v_{i}})}+\sum\limits_{i=5}^{q+1}{x_{v_{i}}}]-[x_{v_{2}}+x_{v_{3}}+\sum\limits_{i=1}^{j-1}{x_{v_{3(1)(i)}}}+ \sum\limits_{i=j+1}^{\varDelta -2}{x_{v_{3(1)(i)}}}]-3x_{v_{3(1)(j)}}\} \nonumber \\
&>&[x_{v_{1}}+\sum\limits_{i=1}^{m_{1}}{x_{v_{2(1)(i)}}}]\cdot \{[\sum\limits_{i=1}^{m_{3}}{x_{v_{4(1)(i)}}}+\sum\limits_{i=m_{3}+2}^{\varDelta -2}{x_{v_{4(1)(i)}}}+ \sum\limits_{i=1}^{m_{3}}{S(G_{v_{4(1)(i)}})}+\sum\limits_{i=5}^{q}{S(G_{v_{i}})}+\sum\limits_{i=5}^{q+1}{x_{v_{i}}}]\nonumber \\
& &\ \ \ \ \ \ \ \ \ \ \ \ \ \ \ \ \ \ \ \ \ \ \ \ \ \ \ -[x_{v_{3}}+\sum\limits_{i=1}^{\varDelta -2}{x_{v_{3(1)(i)}}}]-x_{v_{2}}\} \label{15.3} \\
& &+ \sum\limits_{j=1}^{m_{2}}{S(G_{v_{3(1)(j)}})}\cdot \{[\sum\limits_{i=1}^{m_{3}+j}{x_{v_{4(1)(i)}}}+\sum\limits_{i=m_{3}+j+2}^{\varDelta -2}{x_{v_{4(1)(i)}}}+\sum\limits_{i=1}^{m_{3}}{S(G_{v_{4(1)(i)}})}+\sum\limits_{i=5}^{q}{S(G_{v_{i}})}+\sum\limits_{i=5}^{q+1}{x_{v_{i}}}]\nonumber \\
& &\ \ \ \ \ \ \ \ \ \ \ \ \ \ \ \ \ \ \ \ \ \ \ \ \ -[x_{v_{2}}+x_{v_{3}}+\sum\limits_{i=1}^{j-1}{x_{v_{3(1)(i)}}}+ \sum\limits_{i=j+1}^{\varDelta -2}{x_{v_{3(1)(i)}}}]-x_{v_{3(1)(j)}}\}.\label{15.4}
\end{eqnarray}
\indent If $(\ref{15.3})< 0$, then we get
\begin{eqnarray}
\rho(T^{*})(x_{v_{3}}-x_{v_{5}})&=&2[\sum\limits_{i=5}^{q+1}{x_{v_{i}}}+\sum\limits_{i=5}^{q}{S(G_{v_{i}})}-\sum\limits_{i=2}^3{x_{v_{i}}}-\sum\limits_{i=1}^{\varDelta -2}{x_{v_{3(1)(i)}}}] <0\Rightarrow x_{v_{3}}<x_{v_{5}}, \label{15.5} \\
\rho(T^{*})(x_{v_{4}}-x_{v_{5}})&<&\sum\limits_{i=5}^{q+1}{x_{v_{i}}}+\sum\limits_{i=5}^{q}{S(G_{v_{i}})}-\sum\limits_{i=2}^3{x_{v_{i}}}-\sum\limits_{i=1}^{\varDelta -2}{x_{v_{3(1)(i)}}} < 0\Rightarrow x_{v_{4}}<x_{v_{5}}.\label{15.6}
\end{eqnarray}
\indent By (\ref{15.5}), (\ref{15.6}) and Lemma \ref{lem5}, we have
\begin{eqnarray}
x_{v_{3}} < x_{v_{5}} < x_{v_{6}} < \cdots < x_{v_{q}} < x_{v_{q+1}}.\label{15.7}
\end{eqnarray}
\indent By (\ref{15.7}) and Lemma \ref{lem1}, we have
\begin{eqnarray}
x_{v_{2}} < x_{v_{q+1}}.\label{15.8}
\end{eqnarray}
\indent Let $v\in G_{v_{4(1)(1)}}^{p}\cup \cdots \cup G_{v_{4(1)(m_{3})}}^{p}\cup G_{v_{5}}^{p}\cup \cdots \cup G_{v_{q}}^{p}$, then similar to above, we can prove for $i\in [1,\varDelta -2]$
\begin{equation}\label{15.9}
x_{v_{3(1)(i)}} < x_{v}.
\end{equation}
\indent Note that $\sum\limits_{i=1}^{m_{3}}{|G_{v_{4(1)(i)}}^{p}|}+\sum\limits_{i=5}^q{|G_{v_{i}}^{p}|}\geq |v_{3(1)}|=\varDelta -2$, and by (\ref{15.9}), we have
\begin{equation}\label{15.10}
\sum\limits_{i=1}^{\varDelta -2}{x_{v_{3(1)(i)}}} < \sum\limits_{i=1}^{m_{3}}{S(G_{v_{4(1)(i)}}^{p})}+\sum\limits_{i=5}^{q}{S(G_{v_{i}}^{p})}.
\end{equation}
\indent  By (\ref{15.7}), (\ref{15.8}) and (\ref{15.10}), we know $(\ref{15.3})> 0$, a contradiction, so $(\ref{15.3})\geq 0$. And similarly, we can prove $(\ref{15.4})\geq 0$. Thus we get the conclusion $\rho(T)>\rho(T^{*})$.
\end{proof}

\begin{theorem}\label{lem16} 
Let $P=v_{1}v_{2}\cdots v_{q}v_{q+1}$ be a path of tree $T$ ($q\geq 5$). Let $G_{v_{2}}^{''}=G_{v_{2}}\cup \{v_{1}\}\cup  v_{1}v_{4(1)(m_{3}+1)}$. $T$ satisfies following conditions:
\begin{enumerate}
\item $|G_{v_{2}}|=|v_{2(1)}|=m_{1}$, $|G_{v_{4(1)(m_{3})}}| = |v_{4(2)(m_{3})}| >0$, $|\{v_{1}\}|+ |G_{v_{2}}| + |G_{v_{4(1)(m_{3})}}|\leq \varDelta - 1$.
\item $\sum\limits_{i=1}^{m_{3}-1}{|G_{v_{4(1)(i)}}^{p}|}+\sum\limits_{i=5}^q{|G_{v_{i}}^{p}|}\geq |v_{3(1)}|=\varDelta -2$.
\item $|G_{v_{3(1)(i)}}| = |v_{3(2)(i)}| = \varDelta -2$, $i\in [1,m_{2}]$.
\item $deg(v_{3(1)(i)})=1$, $i\in [m_{2}+1,\varDelta-2]$.
\item $(m_{2}+1)+m_{3}= \varDelta -1$.
\end{enumerate}
Let $T^{*}=T-(\sum\limits_{i=1}^{m_{1}}{v_{2}v_{2(1)(i)}}+v_{2}v_{1}+\sum\limits_{i=1}^{m_{2}}{v_{3}v_{3(1)(i)}}) + (v_{4(1)(m_{3}+1)}v_{1}+\sum\limits_{i=1}^{m_{1}}{v_{4(1)(m_{3}+1)}v_{2(1)(i)}}+\sum\limits_{i=1}^{m_{2}}{v_{4(1)(m_{3}+i+1)}v_{3(1)(i)}})$, then $\rho(T)>\rho(T^{*})$.
\end{theorem}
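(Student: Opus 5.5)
The plan is to mirror the proof of Lemma \ref{lem15}, since $T^{*}$ is built by the same transformation: $G_{v_{2}}\cup\{v_{1}\}$ is hung on the pendant vertex $v_{4(1)(m_{3}+1)}$, and each $G_{v_{3(1)(j)}}$, $j\in[1,m_{2}]$, is hung on $v_{4(1)(m_{3}+j+1)}$. The only differences are in the bookkeeping. The condition $(m_{2}+1)+m_{3}=\varDelta-1$ means every child of $v_{4}$ other than $v_{3},v_{5}$ is now used, and the pendant-count hypothesis only involves $v_{4(1)(1)},\dots,v_{4(1)(m_{3}-1)}$ together with $G_{v_{5}},\dots,G_{v_{q}}$. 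The new degree condition $|\{v_{1}\}|+|G_{v_{2}}|+|G_{v_{4(1)(m_{3})}}|\le\varDelta-1$ has to be used somewhere, and I expect it to replace the role played in Lemma \ref{lem15} by the slack $(m_{2}+1)+m_{3}\le\varDelta-2$. Throughout, let $x$ be the Perron eigenvector of $T^{*}$ and bound $\frac12[\rho(T)-\rho(T^{*})]\ge\frac12x^{t}[D(T)-D(T^{*})]x$.

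\textbf{Step 1.} Prove the local comparisons $x_{v_{4}}+3x_{v_{4(1)(m_{3}+1)}}>2x_{v_{2}}$ and, for each $j\in[1,m_{2}]$, $x_{v_{4}}+3x_{v_{4(1)(m_{3}+j+1)}}>2x_{v_{3(1)(j)}}$. These are (\ref{15.1}) and (\ref{15.2}) in $T^{*}$. The method is the same as there: expand $\rho(T^{*})$ times the left-hand difference using the eigen-equations, control the mixed terms with Lemma \ref{lem9} and Lemma \ref{lem4}, and absorb the negative terms into $[\rho(T^{*})+2]$.

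\textbf{Step 2.} Write $x^{t}[D(T)-D(T^{*})]x$ as a sum of two kinds of product terms:
\begin{itemize}
\item $S(G_{v_{2}}^{''})$ times a bracket analogous to (\ref{15.3});
\item for each $j\in[1,m_{2}]$, $S(G_{v_{3(1)(j)}})$ times a bracket analogous to (\ref{15.4}).
\end{itemize}
The differences from Lemma \ref{lem15} are that the sums over $v_{4(1)(i)}$ now run up to $\varDelta-2=m_{2}+m_{3}+1$ with no free pendant vertices left, and that $v_{4(1)(m_{3})}$ with its subtree $G_{v_{4(1)(m_{3})}}$ appears. Using Step 1 removes the $3x$ terms, as in the passage to (\ref{15.3}) and (\ref{15.4}).

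\textbf{Step 3.} Show each reduced bracket is nonnegative by contradiction. Suppose the bracket for $G_{v_{2}}^{''}$ is negative.
\begin{enumerate}
\item Lemma \ref{lem2} applied to the edges $v_{3}v_{4}$ and $v_{4}v_{5}$ gives $x_{v_{3}}<x_{v_{5}}$ and $x_{v_{4}}<x_{v_{5}}$, exactly as in (\ref{15.5}) and (\ref{15.6}).
\item Lemma \ref{lem5} then gives $x_{v_{3}}<x_{v_{5}}<\cdots<x_{v_{q+1}}$, and Lemma \ref{lem1} gives $x_{v_{2}}<x_{v_{q+1}}$.
\item Every pendant vertex in $G_{v_{4(1)(i)}}$ ($i\le m_{3}-1$) and in $G_{v_{i}}$ ($5\le i\le q$) then has coordinate larger than every $x_{v_{3(1)(i)}}$. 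Together with hypothesis 2 this gives the analogue of (\ref{15.10}), which contradicts the assumed negativity.
\end{enumerate}

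\textbf{Main obstacle.} The hard part is the term involving $G_{v_{4(1)(m_{3})}}$ and the missing free slots at $v_{4}$. In Lemma \ref{lem15} the comparison was cushioned by leftover pendant children of $v_{4}$; here those are gone. The contribution of $v_{4(1)(m_{3})}$ and its subtree must instead be compared directly with $x_{v_{2}}+x_{v_{3}}$, or with the sum over $v_{3(1)}$. My first attempt will be an argument like Lemma \ref{lem9}: use $|\{v_{1}\}|+|G_{v_{2}}|+|G_{v_{4(1)(m_{3})}}|\le\varDelta-1$ to compare the pendant sum of $G_{v_{4(1)(m_{3})}}$ with that of $G_{v_{2}}^{''}$ via Lemma \ref{lem1}. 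This should show that $S(G_{v_{4(1)(m_{3})}})+x_{v_{4(1)(m_{3})}}$ compensates for the lost pendant terms. If that fails, I will instead apply Lemma \ref{lem10} to move pendants first, reducing to the setting of Lemma \ref{lem15}.
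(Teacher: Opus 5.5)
You read the displayed formula for $T^{*}$ literally, but under hypothesis 5 it cannot be carried out. Since $(m_{2}+1)+m_{3}=\varDelta-1$, your recipe (hang $G_{v_{2}}^{''}$ on a pendant $v_{4(1)(m_{3}+1)}$ and $G_{v_{3(1)(j)}}$ on $v_{4(1)(m_{3}+j+1)}$) needs $\varDelta-1$ neighbours of $v_{4}$ off $P$. But $|v_{4(1)}|\le\varDelta-2$, so the vertex $v_{4(1)(\varDelta-1)}$ does not exist; the formula was copied unchanged from Lemma \ref{lem15}.

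The intended $T^{*}$ (Figure 13) is different. It hangs $v_{1}$ and the children of $v_{2}$ on the \emph{non-pendant} child $v_{4(1)(m_{3})}$, next to its own pendant vertices $G_{v_{4(1)(m_{3})}}$. It hangs $G_{v_{3(1)(j)}}$ on $v_{4(1)(m_{3}+j)}$ for $j\in[1,m_{2}]$, which uses exactly the $m_{2}+m_{3}=\varDelta-2$ available slots. This explains the two hypotheses you found puzzling:
\begin{itemize}
\item $|\{v_{1}\}|+|G_{v_{2}}|+|G_{v_{4(1)(m_{3})}}|\le\varDelta-1$ is precisely $\deg_{T^{*}}(v_{4(1)(m_{3})})\le\varDelta$. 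It is a feasibility condition keeping $T^{*}\in T_{(n,\varDelta)}$, not an analytic input to the eigenvector estimates.
\item Hypothesis 2 stops at $m_{3}-1$ because, in the bracket multiplying $x_{v_{1}}+\sum_{i}x_{v_{2(1)(i)}}$, the host now contributes $3x_{v_{4(1)(m_{3})}}+3S(G_{v_{4(1)(m_{3})}})$ rather than entering with coefficient $1$. The relevant distances drop from $4$ and $5$ to $1$ and $2$.
\end{itemize}

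Consequently your ``main obstacle'' is not real, and the plan for it is misdirected. In Lemma \ref{lem15} the positivity of (\ref{15.3}) never used leftover pendant children of $v_{4}$; it used only (\ref{15.7}), (\ref{15.8}) and (\ref{15.10}). So nothing needs compensating, and there is no comparison into which the degree condition should be fed via Lemma \ref{lem9} and Lemma \ref{lem1}.

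Your fallback also does not prove the statement. Moving pendants with Lemma \ref{lem10} and then invoking Lemma \ref{lem15} would compare $\rho(T)$ with the spectral radius of some other tree, not the prescribed $T^{*}$.

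Once the transformation is corrected, the paper's ``similar to Lemma \ref{lem15}'' is literally true. State Step 1 for the host $v_{4(1)(m_{3})}$, with $3S(G_{v_{4(1)(m_{3})}})$ as an extra positive term, and for $v_{4(1)(m_{3}+j)}$. Your Steps 2--3 then go through as you describe, using the pendant count exactly as given in hypothesis 2.
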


\begin{center}
\begin{tikzpicture}
\draw (0,0)node[below]{$v_{1}$} -- (2,0)node[below]{$v_{2}$} -- (4,0)node[above]{$v_{3}$} -- (7,0)node[below]{$v_{4}$} -- (10,0)node[above]{$v_{5}$};

\filldraw[fill=black,draw=black](10.7,0) circle [radius=0.01];
\filldraw[fill=black,draw=black](11,0) circle [radius=0.01];
\filldraw[fill=black,draw=black](11.3,0) circle [radius=0.01];

\draw (12,0)node[above]{$v_{q}$} -- (14,0)node[below]{$v_{q+1}$};

\draw (2,0.6) ellipse[x radius=0.3,y radius=0.6];
\node at (2,0.6){$G_{v_{2}}$};

\draw (1.25,-1)node[above left]{$v_{3(1)(1)}$} -- (4,0);
\filldraw[fill=black,draw=black] (1.25,-1) circle [radius=(0.0018)^(0.5)]; 
\draw (1.25,-1.3) ellipse[x radius=0.8,y radius=0.3];
\node at (1.25,-1.3){$G_{v_{3(1)(1)}}$};

\filldraw[fill=black,draw=black](1.75,-1) circle [radius=0.01];
\filldraw[fill=black,draw=black](2.05,-1) circle [radius=0.01];
\filldraw[fill=black,draw=black](2.35,-1) circle [radius=0.01];

\draw (3.25,-1) -- (4,0);
\node at (4.5,-0.75){$v_{3(1)(m_{2})}$};
\filldraw[fill=black,draw=black] (3.25,-1) circle [radius=(0.0018)^(0.5)]; 
\draw (3.25,-1.3) ellipse[x radius=1,y radius=0.3];
\node at (3.25,-1.3){$G_{v_{3(1)(m_{2})}}$};

\filldraw[fill=black,draw=black](6.5,-1) circle [radius=0.01];
\filldraw[fill=black,draw=black](6.85,-1) circle [radius=0.01];
\filldraw[fill=black,draw=black](7.15,-1) circle [radius=0.01];

\draw (6,-1)node[below]{$v_{3(1)(m_{2}+1)}$} -- (4,0);
\filldraw[fill=black,draw=black] (6,-1) circle [radius=(0.0018)^(0.5)];

\draw (8,-1)node[below right]{$v_{3(1)(\varDelta -2)}$} -- (4,0);
\filldraw[fill=black,draw=black] (8,-1) circle [radius=(0.0018)^(0.5)];

\draw (3.25,1)node[below]{$v_{4(1)(1)}$} -- (7,0);
\draw (3.25,1.3) ellipse[x radius=0.8,y radius=0.3];
\node at (3.25,1.3){$G_{v_{4(1)(1)}}$};
\filldraw[fill=black,draw=black] (3.25,1) circle [radius=(0.0018)^(0.5)];

\filldraw[fill=black,draw=black](3.8125,1) circle [radius=0.01];
\filldraw[fill=black,draw=black](4.1875,1) circle [radius=0.01];
\filldraw[fill=black,draw=black](4.5625,1) circle [radius=0.01];

\draw (5.5,1) -- (7,0);
\node at (5.05,0.755){$v_{4(1)(m_{3})}$};
\draw (5.5,1.3) ellipse[x radius=1,y radius=0.3];
\node at (5.5,1.3){$G_{v_{4(1)(m_{3})}}$};
\filldraw[fill=black,draw=black] (5.5,1) circle [radius=(0.0018)^(0.5)];

\draw (7.5,1)node[above]{$v_{4(1)(m_{3}+1)}$} -- (7,0);
\filldraw[fill=black,draw=black] (7.5,1) circle [radius=(0.0018)^(0.5)];

\filldraw[fill=black,draw=black](8.625,1) circle [radius=0.01];
\filldraw[fill=black,draw=black](9.375,1) circle [radius=0.01];
\filldraw[fill=black,draw=black](10.125,1) circle [radius=0.01];

\draw (13,1)node[above]{$v_{4(1)(\varDelta -2)}$} -- (7,0);
\filldraw[fill=black,draw=black] (13,1) circle [radius=(0.0018)^(0.5)];

\draw (10,-0.6) ellipse[x radius=0.3,y radius=0.6];
\node at (10,-0.6){$G_{v_{5}}$};

\draw (12,-0.6) ellipse[x radius=0.3,y radius=0.6];
\node at (12,-0.6){$G_{v_{q}}$};

\filldraw[fill=black,draw=black] (0,0) circle [radius=(0.0018)^(0.5)]; 
\filldraw[fill=black,draw=black] (2,0) circle [radius=(0.0018)^(0.5)]; 
\filldraw[fill=black,draw=black] (4,0) circle [radius=(0.0018)^(0.5)]; 
\filldraw[fill=black,draw=black] (7,0) circle [radius=(0.0018)^(0.5)]; 
\filldraw[fill=black,draw=black] (10,0) circle [radius=(0.0018)^(0.5)]; 
\filldraw[fill=black,draw=black] (12,0) circle [radius=(0.0018)^(0.5)]; 
\filldraw[fill=black,draw=black] (14,0) circle [radius=(0.0018)^(0.5)];

\node at (7,-2){$T$};
\end{tikzpicture}
\end{center}

\begin{center}
\begin{tikzpicture}

\draw (2,0)node[below]{$v_{2}$} -- (4,0)node[above]{$v_{3}$} -- (7,0)node[below]{$v_{4}$} -- (10,0)node[above]{$v_{5}$};

\filldraw[fill=black,draw=black](10.7,0) circle [radius=0.01];
\filldraw[fill=black,draw=black](11,0) circle [radius=0.01];
\filldraw[fill=black,draw=black](11.3,0) circle [radius=0.01];

\draw (12,0)node[above]{$v_{q}$} -- (14,0)node[below]{$v_{q+1}$};

\draw (1.25,-1)node[below]{$v_{3(1)(1)}$} -- (4,0);
\filldraw[fill=black,draw=black] (1.25,-1) circle [radius=(0.0018)^(0.5)];

\filldraw[fill=black,draw=black](1.75,-1) circle [radius=0.01];
\filldraw[fill=black,draw=black](2.05,-1) circle [radius=0.01];
\filldraw[fill=black,draw=black](2.35,-1) circle [radius=0.01];

\draw (3.25,-1)node[below]{$v_{3(1)(m_{2})}$} -- (4,0);
\filldraw[fill=black,draw=black] (3.25,-1) circle [radius=(0.0018)^(0.5)];

\filldraw[fill=black,draw=black](6.5,-1) circle [radius=0.01];
\filldraw[fill=black,draw=black](6.85,-1) circle [radius=0.01];
\filldraw[fill=black,draw=black](7.15,-1) circle [radius=0.01];

\draw (6,-1)node[below]{$v_{3(1)(m_{2}+1)}$} -- (4,0);
\filldraw[fill=black,draw=black] (6,-1) circle [radius=(0.0018)^(0.5)];

\draw (8,-1)node[below right]{$v_{3(1)(\varDelta -2)}$} -- (4,0);
\filldraw[fill=black,draw=black] (8,-1) circle [radius=(0.0018)^(0.5)];

\draw (3.25,1)node[below]{$v_{4(1)(1)}$} -- (7,0);
\draw (3.25,1.3) ellipse[x radius=0.8,y radius=0.3];
\node at (3.25,1.3){$G_{v_{4(1)(1)}}$};
\filldraw[fill=black,draw=black] (3.25,1) circle [radius=(0.0018)^(0.5)];

\filldraw[fill=black,draw=black](3.8125,1) circle [radius=0.01];
\filldraw[fill=black,draw=black](4.1875,1) circle [radius=0.01];
\filldraw[fill=black,draw=black](4.5625,1) circle [radius=0.01];

\draw (5.5,1) -- (7,0);
\node at (5.05,0.755){$v_{4(1)(m_{3})}$};
\draw (5.5,1.8) ellipse[x radius=1,y radius=0.8];
\node at (5.5,1.9){$G_{v_{4(1)(m_{3})}}$};
\node at (5.5,1.4){$\cup G_{v_{2}}^{''}$};
\filldraw[fill=black,draw=black] (5.5,1) circle [radius=(0.0018)^(0.5)];

\draw (7.5,1)node[below right]{$v_{4(1)(m_{3}+1)}$} -- (7,0);
\draw (7.5,1.3) ellipse[x radius=0.8,y radius=0.3];
\node at (7.5,1.3){$G_{v_{3(1)(1)}}$};
\filldraw[fill=black,draw=black] (7.5,1) circle [radius=(0.0018)^(0.5)];

\filldraw[fill=black,draw=black](8.625,1) circle [radius=0.01];
\filldraw[fill=black,draw=black](9.375,1) circle [radius=0.01];
\filldraw[fill=black,draw=black](10.125,1) circle [radius=0.01];

\draw (13,1)node[below right]{$v_{4(1)(\varDelta -2)}$} -- (7,0);
\draw (13,1.3) ellipse[x radius=1,y radius=0.3];
\node at (13,1.3){$G_{v_{3(1)(m_{2})}}$};
\filldraw[fill=black,draw=black] (13,1) circle [radius=(0.0018)^(0.5)];

\draw (10,-0.6) ellipse[x radius=0.3,y radius=0.6];
\node at (10,-0.6){$G_{v_{5}}$};

\draw (12,-0.6) ellipse[x radius=0.3,y radius=0.6];
\node at (12,-0.6){$G_{v_{q}}$};

\filldraw[fill=black,draw=black] (2,0) circle [radius=(0.0018)^(0.5)]; 
\filldraw[fill=black,draw=black] (4,0) circle [radius=(0.0018)^(0.5)]; 
\filldraw[fill=black,draw=black] (7,0) circle [radius=(0.0018)^(0.5)]; 
\filldraw[fill=black,draw=black] (10,0) circle [radius=(0.0018)^(0.5)]; 
\filldraw[fill=black,draw=black] (12,0) circle [radius=(0.0018)^(0.5)]; 
\filldraw[fill=black,draw=black] (14,0) circle [radius=(0.0018)^(0.5)]; 

\node at (7,-2){$T^{*}$};

\node at (7,-3){$Figure\ 13: The\ graphs\ T\ and\ T^{*}\ in\ Lemma\ 16.$};
\end{tikzpicture}
\end{center}

\begin{proof}
\indent Similar as Lemma \ref{lem15}, we can prove $\rho(T)>\rho(T^{*})$.
\end{proof}

\section{Proof of Main Theorem}

\begin{theorem0}
Let $T_{(n,\varDelta)}$ be the tree with order n and maximum degree $\varDelta$, where $2\varDelta+1 \leq n \leq \varDelta^{2} +1$ and $\varDelta \geq 3$. $T\in T_{(n,\varDelta)}$ and $\rho(T)=\rho_{min}(T_{(n,\varDelta)})$. Then $T$ is the complete $\varDelta$-ary tree.
\end{theorem0}
\begin{proof}
{\bfseries Case 1:} $2\varDelta+1 \leq n \leq 3\varDelta-1$.\\

{\bfseries Case 1.1:} $diam(T)\geq 7$.\\
\indent Let $v_{1}v_{2}\cdots v_{8}\cdots v_{d+1}$ be diametrical path of $T$.As by Lemma \ref{lem8}, we have 
 $$3\varDelta-1 \geq n \geq \sum\limits_{i=2}^7{|v_{i(1)}|}+\sum\limits_{i=1}^8{|\{v_{i}\}|} \geq 3(\varDelta-2)+8 =3\varDelta+2,$$ 
which is a contradiction. So, $diam(T)\leq 6$.\\

{\bfseries Case 1.2:} $diam(T)=6$. \\
\indent Let $v_{1}v_{2}\cdot \cdot \cdot v_{7}$ be diametrical path of $T$. Without loss of generality, assume that $|G_{v_{6}}|\geq|G_{v_{2}}|$, i.e., $|v_{6(1)}| \geq |v_{2(1)}|$.\\

{\bfseries Claim 1}: $|v_{3(1)}|=\varDelta-2$.\\
\indent Otherwise, by Lemma \ref{lem8} and Lemma \ref{lem13}, we can get $T^{*}$ with $\rho(T^{*})<\rho(T)$ and $|v_{3(1)}|_{T^{*}}=\varDelta-2$, a contradiction.\\

{\bfseries Claim 2}: $|v_{5(1)}|=\varDelta-2$.\\
\indent Since $|G_{v_{3}}^{p}| \geq |v_{3(1)}| = \varDelta -2 \geq |G_{v_{6}}|,$ then using similar approach as of Claim 1 of Case 1.2, we can prove this Claim.\\

{\bfseries Claim 3}: $|v_{4(1)}|=\varDelta-2$.\\
\indent Otherwise, by Lemma \ref{lem7} or Lemma \ref{lem14}, we can get $T^{*}$ with $\rho(T^{*})<\rho(T)$ and $\varDelta-2\geq |v_{4(1)}|_{T^{*}}> |v_{4(1)}|_{T}$, a contradiction.\\

\indent Combining all the results of Case 1.2, we have $$3\varDelta-1 \geq n \geq \sum\limits_{i=3}^5{|v_{i(1)}|} + \sum\limits_{i=1}^7{|\{v_{i}\}|} \geq 3(\varDelta-2)+7 =3\varDelta+1,$$
a contradiction, so $diam(T) \leq 5$.\\

{\bfseries Case 1.3:} $diam(T)=5$. \\
\indent Let $v_{1}v_{2}\cdot \cdot \cdot v_{6}$ be diametrical path of $T$.\\

{\bfseries Claim 1}: $|v_{3(1)}|=|v_{4(1)}|=\varDelta-2$. \\
\indent Proof is similar as that of Claim 1 and Claim 2 of Case 1.2.\\

{\bfseries Claim 2}: $|v_{3(2)}|=0$. \\
\indent Otherwise, since $$|\{v_{1}\}|+|G_{v_{2}}|+|v_{3(2)}| \leq n-(|v_{3(1)}|+|v_{4(1)}|+\sum\limits_{i=2}^6{|\{v_{i}\}|}) \leq (3\varDelta-1)-2(\varDelta-2)-5 =\varDelta - 2,$$
by Lemma \ref{lem10}, we can get $T^{*}$ with $\rho(T^{*})<\rho(T)$ and $|v_{3(2)}|_{T^{*}}=0$, a contradiction. \\

{\bfseries Claim 3}: $|v_{4(2)}|=0$.\\ 
\indent Proof is similar as that of Claim 2 of Case 1.3.\\

{\bfseries Claim 4}: $|\{v_{1}\}|+|G_{v_{2}}|+|\{v_{6}\}|+|G_{v_{5}}|\leq \varDelta-1$. \\
$$|\{v_{1}\}|+|G_{v_{2}}|+|\{v_{6}\}|+|G_{v_{5}}| = n-(|G_{v_{3}}|+|G_{v_{4}}|+\sum\limits_{i=2}^5{|\{v_{i}\}|})\leq (3\varDelta-1)-2(\varDelta-2)-4 = \varDelta-1.$$\\ 
\indent Combining all the results of Case 1.3, by Lemma \ref{lem11}, we can get $T^{*}$ with $\rho(T^{*})<\rho(T)$ and $diam(T^{*})=4$, a contradiction. So, $diam(T)\leq4$.\\

{\bfseries Case 1.4:} $diam(T)=4$. \\
\indent Let $v_{1}v_{2}\cdot \cdot \cdot v_{5}$ be diametrical path of $T$.\\

{\bfseries Claim 1}: $|v_{3(1)}|=\varDelta-2$. \\
\indent Proof is similar as that of Claim 1 of Case 1.2.\\

\indent Assume there are $m^{'}$ $\frac{1}{2}$-pendant vertices attached to $v_{3}$ except for  $v_{2}$ and $v_{4}$. Without losing generality, assume $$|\{v_{5}\}|+|G_{v_{4}}|\geq |\{v_{1}\}|+|G_{v_{2}}|\geq |v_{3(2)(1)}|\geq \cdots \geq |v_{3(2)(m^{'})}|> |v_{3(2)(m^{'}+1)}| =\cdots =|v_{3(2)(\varDelta-2)}|=0.$$

{\bfseries Claim 2}: $m^{'}=0$ and $|G_{v_{4}}|=\varDelta-2$. \\
\indent Otherwise, $$|\{v_{1}\}|+|G_{v_{2}}|+|\{v_{5}\}|+|G_{v_{4}}|+|v_{3(2)}| = n-(|v_{3(1)}|+\sum\limits_{i=2}^4{|v_{i}|}) \leq (3\varDelta-1)-(\varDelta-2)-3 =2(\varDelta-1),$$
\noindent and note that $n \geq 2\varDelta +1$, then by Lemma \ref{lem10}, we can get $T^{*}$ with $\rho(T^{*})<\rho(T)$, $|G_{v_{4}}|_{T^{*}}=\varDelta-2$ and $m^{'}_{T^{*}}=0$, a contradiction.\\

{\bfseries Case 1.5:} $diam(T)<4$. \\
\indent Impossible since $n\geq 2\varDelta+1$.\\

\indent Combining all the results from Case 1.1 to Case 1.5, we can get the conclusion that $T$ is the complete $\varDelta$-ary tree under Case 1, i.e. $2\varDelta+1 \leq n \leq 3\varDelta-1$.\\

{\bfseries Case 2:} $3\varDelta \leq n \leq \varDelta^{2} +1$.\\
\begin{center}
\begin{tikzpicture}[scale=0.75]
\draw (0,0)node[below]{$v_{1}$} -- (2,0)node[below]{$v_{2}$} -- (4,0)node[below]{$v_{3}$} -- (6,0)node[below]{$v_{4}$} -- (8,0)node[below]{$v_{5}$} -- (10,0)node[below]{$v_{6}$} -- (12,0)node[below]{$v_{7}$};

\filldraw[fill=black,draw=black](12.9,0) circle [radius=0.01];
\filldraw[fill=black,draw=black](13,0) circle [radius=0.01];
\filldraw[fill=black,draw=black](13.1,0) circle [radius=0.01];

\draw (14,0)node[below]{$v_{d-1}$} -- (16,0)node[below]{$v_{d}$} -- (18,0)node[below]{$v_{d+1}$};

\draw (2,0.6) ellipse[x radius=0.5,y radius=0.6];
\node at (2,0.6){$G_{v_{2}}$};
\draw (4,0.6) ellipse[x radius=0.5,y radius=0.6];
\node at (4,0.6){$G_{v_{3}}$};
\draw (6,0.6) ellipse[x radius=0.5,y radius=0.6];
\node at (6,0.6){$G_{v_{4}}$};
\draw (8,0.6) ellipse[x radius=0.5,y radius=0.6];
\node at (8,0.6){$G_{v_{5}}$};
\draw (10,0.6) ellipse[x radius=0.5,y radius=0.6];
\node at (10,0.6){$G_{v_{6}}$};
\draw (12,0.6) ellipse[x radius=0.5,y radius=0.6];
\node at (12,0.6){$G_{v_{7}}$};
\draw (14,0.6) ellipse[x radius=0.7,y radius=0.6];
\node at (14,0.6){$G_{v_{d-1}}$};
\draw (16,0.6) ellipse[x radius=0.5,y radius=0.6];
\node at (16,0.6){$G_{v_{d}}$};

\filldraw[fill=black,draw=black] (0,0) circle [radius=(0.0018)^(0.5)]; 
\filldraw[fill=black,draw=black] (2,0) circle [radius=(0.0018)^(0.5)]; 
\filldraw[fill=black,draw=black] (4,0) circle [radius=(0.0018)^(0.5)]; 
\filldraw[fill=black,draw=black] (6,0) circle [radius=(0.0018)^(0.5)]; 
\filldraw[fill=black,draw=black] (8,0) circle [radius=(0.0018)^(0.5)]; 
\filldraw[fill=black,draw=black] (10,0) circle [radius=(0.0018)^(0.5)]; 
\filldraw[fill=black,draw=black] (12,0) circle [radius=(0.0018)^(0.5)]; 
\filldraw[fill=black,draw=black] (14,0) circle [radius=(0.0018)^(0.5)]; 
\filldraw[fill=black,draw=black] (16,0) circle [radius=(0.0018)^(0.5)]; 
\filldraw[fill=black,draw=black] (18,0) circle [radius=(0.0018)^(0.5)]; 

\node at (9,-1){$T^{'}$};

\node at (9,-1.5){$Figure\ 14: The\ graph\ T^{'}\ in\ Case\ 2.$};
\end{tikzpicture}
\end{center}

\begin{proof}
\indent It's obvious that by Lemma \ref{lem6}, any non-pre-tree with $3\varDelta \leq n \leq \varDelta^{2} +1$ can be transformed to a pre-tree $T^{'}$ with distance spectral radius decreased. Let $v_{1}v_{2}\cdots v_{7}\cdots v_{d+1}$ be an arbitary diametrical path of $T^{'}$. Without losing generality, we assume $|G_{v_{d}}|\geq|G_{v_{2}}|$. And it's obvious $|G_{v_{d-1}}^{p}| + |G_{v_{d}}| + |\{v_{d+1}\}|\geq \varDelta -1$. So by Lemma \ref{lem13}, Lemma \ref{lem7} (or Lemma \ref{lem14}), Lemma \ref{lem10} and Lemma \ref{lem15} (or Lemma \ref{lem16}), we can get $T^{'}_{1}$ with either $diam(T^{'}_{1}) = diam(T^{'})-1$ or $ndp(T^{'}_{1})\leq ndp(T^{'})-1$. By similar operation as above, we can decrease the diameter of this tree to 5.
\end{proof}

{\bfseries Case 2.1:} $diam(T)=5$.\\
\indent Let $v_{1}v_{2}\cdot \cdot \cdot v_{6}$ be diametrical path of $T$. \\

{\bfseries Claim 1:} $|v_{3(1)}|=|v_{4(1)}|=\varDelta -2$.\\
\indent Proof is similar as that of Claim 1 and Claim 2 of Case 1.2.\\

\indent Assume there are $m_{3}$ $\frac{1}{2}$-pendant vertices attached to $v_{3}$ except for $v_{2}$ and $v_{4}$. Assume there are $m_{4}$ $\frac{1}{2}$-pendant vertices attached to $v_{4}$ except for $v_{3}$ and $v_{5}$. Without losing generality, assume $m_{3}\geq m_{4}$, $$|v_{3(2)(m_{3})}|\geq \cdots \geq |v_{3(2)(2)}|\geq |v_{3(2)(1)}|\geq |\{v_{1}\}|+|G_{v_{2}}|$$ and $$|v_{4(2)(m_{4})}|\geq \cdots \geq |v_{4(2)(2)}|\geq |v_{4(2)(1)}|\geq |\{v_{6}\}|+|G_{v_{5}}|.$$\\

\begin{center}
\begin{tikzpicture}
\draw (0,0)node[below]{$v_{1}$} -- (2,0)node[above]{$v_{2}$} -- (4,0)node[below]{$v_{3}$} -- (7,0)node[above]{$v_{4}$} -- (9,0)node[below]{$v_{5}$} -- (11,0)node[above]{$v_{6}$};

\draw (2,-0.6) ellipse[x radius=0.3,y radius=0.6];
\node at (2,-0.6){$G_{v_{2}}$};

\draw (9,0.6) ellipse[x radius=0.3,y radius=0.6];
\node at (9,0.6){$G_{v_{5}}$};

\draw (1,1) -- (4,0);
\node at (0.7,0.7){$v_{3(1)(1)}$};
\filldraw[fill=black,draw=black](1.85,1)circle [radius=0.01];
\filldraw[fill=black,draw=black](2,1)circle [radius=0.01];
\filldraw[fill=black,draw=black](2.15,1)circle [radius=0.01]; 
\draw (4,1) -- (4,0);
\node at (3.3,0.8){$v_{3(1)(m_{3})}$};
\filldraw[fill=black,draw=black](4.85,1)circle [radius=0.01];
\filldraw[fill=black,draw=black](5,1)circle [radius=0.01];
\filldraw[fill=black,draw=black](5.15,1)circle [radius=0.01]; 
\draw (7,1) -- (4,0);
\node at (6.7,1.3){$v_{3(1)(\varDelta -2)}$};

\draw (1,1.5) ellipse[x radius=1.2,y radius=0.5];
\node at (1,1.5){$G_{v_{3(1)(1)}}$};

\draw (4,1.5) ellipse[x radius=1.2,y radius=0.5];
\node at (4,1.5){$G_{v_{3(1)(m_{3})}}$};

\draw (4,-1) -- (7,0);
\node at (3.7,-0.7){$v_{4(1)(1)}$};
\filldraw[fill=black,draw=black](4.85,-1)circle [radius=0.01];
\filldraw[fill=black,draw=black](5,-1)circle [radius=0.01];
\filldraw[fill=black,draw=black](5.15,-1)circle [radius=0.01]; 
\draw (7,-1) -- (7,0);
\node at (6.3,-0.7){$v_{4(1)(m_{4})}$};
\filldraw[fill=black,draw=black](7.85,-1)circle [radius=0.01];
\filldraw[fill=black,draw=black](8,-1)circle [radius=0.01];
\filldraw[fill=black,draw=black](8.15,-1)circle [radius=0.01]; 
\draw (10,-1) -- (7,0);
\node at (9.7,-1.3){$v_{4(1)(\varDelta -2)}$};

\draw (4,-1.5) ellipse[x radius=1.2,y radius=0.5];
\node at (4,-1.5){$G_{v_{4(1)(1)}}$};

\draw (7,-1.5) ellipse[x radius=1.2,y radius=0.5];
\node at (7,-1.5){$G_{v_{4(1)(m_{4})}}$};

\filldraw[fill=black,draw=black] (0,0) circle [radius=(0.0018)^(0.5)]; 
\filldraw[fill=black,draw=black] (2,0) circle [radius=(0.0018)^(0.5)]; 
\filldraw[fill=black,draw=black] (4,0) circle [radius=(0.0018)^(0.5)]; 
\filldraw[fill=black,draw=black] (7,0) circle [radius=(0.0018)^(0.5)]; 
\filldraw[fill=black,draw=black] (9,0) circle [radius=(0.0018)^(0.5)]; 
\filldraw[fill=black,draw=black] (11,0) circle [radius=(0.0018)^(0.5)];

\filldraw[fill=black,draw=black] (1,1) circle [radius=(0.0018)^(0.5)]; 
\filldraw[fill=black,draw=black] (4,1) circle [radius=(0.0018)^(0.5)]; 
\filldraw[fill=black,draw=black] (7,1) circle [radius=(0.0018)^(0.5)];

\filldraw[fill=black,draw=black] (4,-1) circle [radius=(0.0018)^(0.5)]; 
\filldraw[fill=black,draw=black] (7,-1) circle [radius=(0.0018)^(0.5)]; 
\filldraw[fill=black,draw=black] (10,-1) circle [radius=(0.0018)^(0.5)]; 

\node at (5.5,-2.2){$T$};

\node at (5.5,-2.7){$Figure\ 15: The\ graph\ T\ in\ Case\ 2.1.$};
\end{tikzpicture}
\end{center}

{\bfseries Claim 2}: $|v_{3(2)(i)}|=|v_{4(2)(j)}| =\varDelta-1$, $i\in [1, m_{3}]$, $j\in [1, m_{4}].$\\
\indent Otherwise, by Lemma \ref{lem10}, we can get $T^{*}$ with $\rho(T^{*})<\rho(T)$, $|v_{3(2)(i)}|_{T^{*}}=|v_{4(2)(j)}|_{T^{*}}=\varDelta-1$, $i\in [1, m_{3}^{'}]$, $j\in [1, m_{4}^{'}]$, $m_{3}^{'}\leq m_{3}$, $m_{4}^{'}\leq m_{4}$, a contradiction.\\

{\bfseries Case 2.1.1:} $m_{3}=m_{4}=0$.\\

{\bfseries Case 2.1.1.1:} $|\{v_{1}\}|+|G_{v_{2}}| + |\{v_{6}\}|+|G_{v_{5}}|\leq \varDelta-1$.\\
%\indent By Lemma \ref{lem11}, we can get $T^{*}$ with $\rho(T^{*})<\rho(T)$ and $diam(T^{*})=4$, a contradiction. So $diam(T)\leq 4$ under Case 2.1.1.1.\\
\indent Impossible since $n\geq 3\varDelta$.\\

{\bfseries Case 2.1.1.2:} $|\{v_{1}\}|+|G_{v_{2}}| + |\{v_{6}\}|+|G_{v_{5}}|> \varDelta-1$.\\

{\bfseries Case 2.1.1.2.1:} $|\{v_{1}\}|+|G_{v_{2}}| =\varDelta-1$ (or $|\{v_{6}\}|+|G_{v_{5}}|= \varDelta-1$).\\
\indent By Lemma \ref{lem15}, we can get $T^{*}$ with $\rho(T^{*})<\rho(T)$ and $diam(T^{*})=4$, a contradiction. So $diam(T)\leq 4$ under Case 2.1.1.2.1.\\

{\bfseries Case 2.1.1.2.2:} $|\{v_{1}\}|+|G_{v_{2}}| <\varDelta-1$ and $|\{v_{6}\}|+|G_{v_{5}}|< \varDelta-1$.\\
\indent By Lemma \ref{lem12}, we can get $T^{*}$ with $\rho(T^{*})<\rho(T)$ and $diam(T^{*})=4$, a contradiction. So $diam(T)\leq 4$ under Case 2.1.1.2.2.\\

{\bfseries Case 2.1.2:} $m_{3}\geq 1$ and $m_{4}\geq 0$.\\
\indent By Lemma \ref{lem15} or Lemma \ref{lem16}, we can get $T^{*}$ with $\rho(T^{*})<\rho(T)$ and $diam(T^{*})=4$, a contradiction. So $diam(T)\leq 4$ under Case 2.1.2.\\

\indent Combining results of Case 2.1.1 and Case 2.1.2, $diam(T)\leq 4$.\\

{\bfseries Case 2.2:} $diam(T)=4$.\\
\indent Let $v_{1}v_{2}\cdots v_{5}$ be diametrical path of $T$. \\

{\bfseries Claim 1:} $|v_{3(1)}|=\varDelta-2$.\\
\indent Proof is similar as that of Claim 1 of Case 1.2.\\

\indent Assume there are $m^{'}$ $\frac{1}{2}$-pendant vertices attached to $v_{3}$ except for $v_{2}$ and $v_{4}$. Without losing generality, assume $$
|\{v_{1}\}|+|G_{v_{2}}|\geq |\{v_{5}\}|+|G_{v_{4}}|\geq |v_{3(2)(1)}|\geq |v_{3(2)(2)}|\geq \cdots \geq |v_{3(2)(m^{'})}|.$$
\begin{center}
\begin{tikzpicture}

\draw (0,0)node[below]{$v_{1}$} -- (2,0)node[above]{$v_{2}$} -- (4,0)node[below]{$v_{3}$} -- (6,0)node[above]{$v_{4}$} -- (8,0)node[above]{$v_{5}$};

\draw (2,-0.6) ellipse[x radius=0.3,y radius=0.6];
\node at (2,-0.6){$G_{v_{2}}$};

\draw (6,-0.6) ellipse[x radius=0.3,y radius=0.6];
\node at (6,-0.6){$G_{v_{4}}$};

\draw (1,1) -- (4,0);
\node at (0.7,0.7){$v_{3(1)(1)}$};
\filldraw[fill=black,draw=black](1.85,1)circle [radius=0.01];
\filldraw[fill=black,draw=black](2,1)circle [radius=0.01];
\filldraw[fill=black,draw=black](2.15,1)circle [radius=0.01]; 
\draw (4,1) -- (4,0);
\node at (3.3,0.8){$v_{3(1)(m^{'})}$};
\filldraw[fill=black,draw=black](4.85,1)circle [radius=0.01];
\filldraw[fill=black,draw=black](5,1)circle [radius=0.01];
\filldraw[fill=black,draw=black](5.15,1)circle [radius=0.01]; 
\draw (7,1) -- (4,0);
\node at (6.7,1.3){$v_{3(1)(\varDelta -2)}$};

\draw (1,1.5) ellipse[x radius=1.2,y radius=0.5];
\node at (1,1.5){$G_{v_{3(1)(1)}}$};

\draw (4,1.5) ellipse[x radius=1.2,y radius=0.5];
\node at (4,1.5){$G_{v_{3(1)(m^{'})}}$};

\filldraw[fill=black,draw=black] (0,0) circle [radius=(0.0018)^(0.5)]; 
\filldraw[fill=black,draw=black] (2,0) circle [radius=(0.0018)^(0.5)]; 
\filldraw[fill=black,draw=black] (4,0) circle [radius=(0.0018)^(0.5)]; 
\filldraw[fill=black,draw=black] (6,0) circle [radius=(0.0018)^(0.5)]; 
\filldraw[fill=black,draw=black] (8,0) circle [radius=(0.0018)^(0.5)]; 
\filldraw[fill=black,draw=black] (8,0) circle [radius=(0.0018)^(0.5)];

\filldraw[fill=black,draw=black] (1,1) circle [radius=(0.0018)^(0.5)]; 
\filldraw[fill=black,draw=black] (4,1) circle [radius=(0.0018)^(0.5)]; 
\filldraw[fill=black,draw=black] (7,1) circle [radius=(0.0018)^(0.5)]; 

\node at (4,-1.2){$T$};

\node at (4,-2.2){$Figure\ 16: The\ graph\ T\ in\ Case\ 2.2.$};
\end{tikzpicture}
\end{center}

{\bfseries Claim 2:} $|\{v_{1}\}|+|G_{v_{2}}|= |\{v_{5}\}|+|G_{v_{4}}|=|v_{3(2)(i)}|=\varDelta-1 \geq |v_{3(2)(m^{'})}|$, $i\in [1,m^{'}-1]$.\\
\indent Otherwise, by Lemma \ref{lem10}, we can get $T^{*}$ with $\rho(T^{*})<\rho(T)$, $|\{v_{1}\}|+|G_{v_{2}}|= |\{v_{5}\}|+|G_{v_{4}}|=|v_{3(2)(i)}|=\varDelta-1 \geq |v_{3(2)(m^{'})}|$, $i\in [1,m^{'}-1]$, a contradiction.\\

{\bfseries Case 2.3:} $diam(T)<4$.\\
\indent Impossible since $n\geq 3\varDelta$.\\

\indent Combining all the results of Case 2, we get the conclusion that $T$ is the complete $\varDelta$-ary tree under Case 2, i.e. $3\varDelta \leq n \leq \varDelta^{2} +1$.\\
\end{proof}

\section{Acknowledgment} This research project is supported by the National Natural Science Foundation of China (RFIS) 12350410356. We thank the anonymous reviewer for the scrutiny and constructive comments, which helped improve the quality of this work.


\begin{thebibliography}{999}
\bibitem{ref1}
Stevanovi\'{c}, D.\& Ili\'{c}, A. (2010). Distance spectral radius of trees with fixed maximum degree. \textit{Electron. J. Linear Algebra}, 20, 168–179.

\bibitem{ref2}
Cvetkovi\'{c}, D.,  Doob, M.\& Sachs,H. (1995). \textit{Spectra of Graphs – Theory and Application}. (3rd ed). Johann Ambrosius Barth Verlag, Heidelberg–Leipzig.

\bibitem{ref3}
Graham, R. L. \& Pollack, H. O. (1971). On the addressing problem for loop switching.\textit{Bell Syst Tech J.}, 50, 2495–2519.

\bibitem{ref4}
Graham, R. L. \& Lovász, L. (1978). Distance matrix polynomials of trees. \textit{Adv Math.}, 29, 60–88.

\bibitem{ref5}
Edelberg, M., Garey, M. R. \& Graham, R. L. (1976). On the distance matrix of a tree.  \textit{Discrete Math.}, 14, 23–39.

\bibitem{ref6}
Consonni, V. \& Todeschini, R. (2008). New spectral indices for molecule description. \textit{MATCH Commun. Math. Comput. Chem.}, 60, 3–14.

\bibitem{ref7}
Gutman, I. \& Medeleanu, M. (1998). On the structure–dependence of the largest eigenvalue of the distance matrix of an alkane. \textit{Indian J. Chem. A}, 37, 569–573.

\bibitem{ref8}
Indulal, G. (2009). Sharp bounds on the distance spectral radius and the distance energy of graphs. \textit{ Linear Algebra Appl.}, 430, 106–113.

\bibitem{ref9}
Zhou, B. (2007). On the largest eigenvalue of the distance matrix of a tree. \textit{MATCH Commun. Math. Comput. Chem.}, 58, 657–662.

\bibitem{ref10}
Zhou, B. \& Ili\'{c}, A. (2010). On distance spectral radius and distance energy of graphs. \textit{MATCH Commun. Math. Comput. Chem.}, 64, 261–280.

\bibitem{ref11}
Zhou, B. \& Trinajsti\'{c}, N. (2007). On the largest eigenvalue of the distance matrix of a connected graph. \textit{Chem. Phys. Lett.}, 447, 384–387.

\bibitem{ref12}
Khan, Z. U. \& Zhang, X. D. (2023). On generalized distance spectral radius and generalized distance energy of graphs. \textit{Discrete Mathematics, Algorithms and Applications}, 15(08), 2250176.

\bibitem{ref13}
Bose, S. S., Nath, M. \& Paul, S. (2011). Distance spectral radius of graphs with r pendent vertices. \textit{Linear Algebra and its Applications}, 435(11),  2828 - 2836.

\bibitem{ref14}
Zhang, X. L. \& Godsil, C. (2011). Connectivity and minimal distance spectral radius of graphs. \textit{Linear and Multilinear Algebra}, 59(7), 745-754.

\bibitem{ref15}
Khan, Z. U. \& Zhang, X. D. (2022). On $ D_ {\alpha} $ spectrum of connected graphs. \textit{Contributions to Discrete Mathematics}, 17(2), 77-95.

\bibitem{ref16}
Yu, G. L., Jia, H. C., Zhang, H. L. \& Shu, J. L. (2012). Some graft transformations and its applications on the distance spectral radius of a graph. \textit{Appl. Math. Lett.}, 25(3), 315-319.

\bibitem{ref17}
Nath, M. \& Paul, S. (2012). On the distance spectral radius of bipartite graphs. \textit{Linear algebra and its applications}, 436(5), 1285 - 1296.

\bibitem{ref18}
Zhang, X.L. (2014). On the distance spectral radius of unicyclic graphs with perfect matchings. \textit{ Electronic Journal of Linear Algebra}, 27, 569- 587.

\bibitem{ref19}
Liu, H., Lu, M. \& Tian, F. (2007). On the spectral radius of unicyclic graphs with fixed diameter. \textit{Linear Algebra Appl.}, 420, 449-457.

\bibitem{ref20}
Subhi, R. \& Powers, D. (1990). The distance spectrum of the path $P_{n}$ and the first distance eigenvector of connected graphs. \textit{Linear Multilinear Algebra}, 28, 75–81.

\bibitem{ref21}
Aouchiche, M. \& Hansen, P. (2014). Distance spectra of graphs: a survey. \textit{Linear Algebra Appl}, 458, 301–386.

\bibitem{ref22}
Wei, N. (2016). \textit{On the Minimal Distance Spectral Radius of Tree with Given Maximal Degree}, Master Degree Thesis, China University of Petroleum.

\bibitem{ref23}
Stevanovi\'{c}, D. \& Ili\'{c}, A. Spectral properties of distance matrix of graphs, in: I. Gutman, B. Furtula (Eds.), Distance in Molecular Graphs Theory, in: Math. Chem. Monogr., vol. 12. \textit{University of Kragujevac, Kragujevac,} 2010, 139–176.

\bibitem{ref24}
Wang, Y. \& Zhou, B. (2013). On distance spectral radius of graphs. \textit{Linear Algebra Appl}, 438, 3490–3503.

\bibitem{ref25}
Du, Z., Ili\'{c}, A. \& Feng, L. (2013). Further results on distance spectral radius of graphs.\textit{Linear Multilinear Algebra}, 61, 1287–1301.

\end{thebibliography}
\end{document}